\documentclass{amsart}

\usepackage{amssymb,mathtools,comment,hyperref}
\usepackage[shortlabels]{enumitem}
\usepackage[alphabetic,initials,nobysame]{amsrefs}

\BibSpec{article}{%
    +{}  {\PrintAuthors}                {author}
    +{,} { \textit}                     {title}
    +{.} { }                            {part}
    +{:} { \textit}                     {subtitle}
    +{,} { \PrintContributions}         {contribution}
    +{.} { \PrintPartials}              {partial}
    +{,} { }                            {journal}
    +{}  { \textbf}                     {volume}
    +{}  { \PrintDatePV}                {date}
    +{,} { \issuetext}                  {number}
    +{,} { \eprintpages}                {pages}
    +{,} { }                            {status}
    +{,} { \url}                        {url}
    +{,} { \PrintDOI}                   {doi}
    +{,} { available at \eprint}        {eprint}
    +{}  { \parenthesize}               {language}
    +{}  { \PrintTranslation}           {translation}
    +{;} { \PrintReprint}               {reprint}
    +{.} { }                            {note}
    +{.} {}                             {transition}
    +{}  {\SentenceSpace \PrintReviews} {review}
}

\BibSpec{collection.article}{%
	+{}  {\PrintAuthors}				{author}
	+{,} { \textit}                     {title}
	+{.} { }                            {part}
	+{:} { \textit}                     {subtitle}
	+{,} { \PrintContributions}         {contribution}
	+{,} { \PrintConference}            {conference}
	+{}  {\PrintBook}                   {book}
	+{,} { }                            {booktitle}
	+{,} { }							{series}
	+{,} { }							{publisher}
	+{,} { }							{address}
	+{,} { \PrintDateB}                 {date}
	+{,} { pp.~}                        {pages}
	+{,} { }                            {status}
	+{,} { \PrintDOI}                   {doi}
	+{,} { available at \eprint}        {eprint}
	+{}  { \parenthesize}               {language}
	+{}  { \PrintTranslation}           {translation}
	+{;} { \PrintReprint}               {reprint}
	+{.} { }                            {note}
	+{.} {}                             {transition}
	+{}  {\SentenceSpace \PrintReviews} {review}
}

\theoremstyle{plain}
\newtheorem{theorem}{Theorem}
\newtheorem{lemma}[theorem]{Lemma}

\newtheorem{prop}[theorem]{Proposition}
\newtheorem{corollary}[theorem]{Corollary}

\theoremstyle{definition}
\newtheorem{definition}[theorem]{Definition}

\theoremstyle{remark}
\newtheorem{remark}[theorem]{Remark}
\newtheorem{question}[theorem]{Question}
\newtheorem{example}[theorem]{Example}

\numberwithin{equation}{section}
\numberwithin{theorem}{section}
\numberwithin{conjecture}{section}


\newcommand{\x}{\scalebox{1.2}{$\chi$} } 

\newcommand{\br}{\overline}
\newcommand{\R}{\mathbb R}
\newcommand{\C}{\mathbb C}

\newcommand{\Z}{\mathbb Z}
\newcommand{\N}{\mathbb N}

\DeclareMathOperator{\dist}{\textup{\text{dist}}}
\DeclareMathOperator{\diam}{\textup{\text{diam}}}

\DeclareMathOperator{\Area}{\textup{\text{Area}}}

\DeclareMathOperator{\loc}{\textup{loc}}


\begin{document}
\title[Monotone Sobolev functions]{Monotone Sobolev functions in planar domains: level sets and smooth approximation}
\author{Dimitrios Ntalampekos}
\address{Institute for Mathematical Sciences, Stony Brook University, Stony Brook, NY 11794, USA.}
\email{dimitrios.ntalampekos@stonybrook.edu}

\date{\today}
\keywords{Sobolev, monotone, level set, approximation, Sard's theorem}
\subjclass[2010]{Primary 30E10; Secondary 35J92, 41A65, 46E35.}

\begin{abstract}
We prove that almost every level set of a Sobolev function in a planar domain
consists of points, Jordan curves, or homeomorphic copies of an interval. For
monotone Sobolev functions in the plane we have the stronger conclusion that
almost every level set is an embedded $1$-dimensional topological submanifold of the plane. Here
monotonicity is in the sense of Lebesgue: the maximum and minimum of the
function in an open set are attained at the boundary. Our result is an analog
of Sard's theorem, which asserts that for a $C^2$-smooth function in a planar domain
almost every value is a regular value. As an application we show that monotone
Sobolev functions in planar domains can be approximated uniformly and in the
Sobolev norm by smooth monotone functions. 
\end{abstract}

\maketitle

\section{Introduction}\label{Section:Introduction}

The classical theorem of Sard \cite{Sard:Theorem} asserts that for a $C^2$-smooth function $f$ in a planar domain $\Omega$ almost every value is a regular value. That is, for almost all $t\in \R$ the set $f^{-1}(t)$ does not intersect the critical set of $f$, and hence, $f^{-1}(t)$ is an embedded $1$-dimensional $C^2$-smooth submanifold of the plane. This theorem is sharp, in the sense that the $C^2$-smoothness cannot be relaxed to $C^1$-smoothness, as was shown by Whitney \cite{Whitney:SardOptimal}. 

In fact, Sard's theorem and some of the other theorems that we quote below have more general statements that hold for maps defined in subsets of $\R^n$, taking values in $\R^m$, and having appropriate regularity. In order to facilitate the comparison to our results, we will only give formulations in the case of real-valued functions defined in planar domains.

Several generalizations and improvements of Sard's theorem have been proved since the original theorem was published. In particular, Dubovitskii \cite{Dubovitskii:Sard} proved that a $C^1$-smooth function $f$ in a planar domain has the property that for a.e.\ value $t\in \R$ the set $f^{-1}(t)$ intersects the critical set in a set of Hausdorff $1$-measure zero. De Pascale \cite{DePascale:Sard} extended the conclusion of Sard's theorem to Sobolev functions of the class $W^{2,p}$, where $p>2$. For other versions of Sard's theorem in the setting of H\"older and Sobolev spaces see  \cite{Bates:Sard}, \cite{BojarskiHajlaszStrzelecki:Sard}, \cite{Figalli:Sard}, \cite{Norton:Sard}. 

Now, we turn our attention to the structure of the level sets of functions, instead of discussing the critical set. Theorem 1.6 in \cite{BojarskiHajlaszStrzelecki:Sard} states that if $f\in W^{1,p}_{\loc}(\R^2)$, then there exists a Borel representative of $f$ such that for a.e.\ $t\in \R$ the level set $f^{-1}(t)$ is equal to $Z\cup \bigcup_{j\in \N} K_j$, where $\mathcal H^1(Z)=0$, $K_j\subset K_{j+1}$, and $K_j$ is \textit{contained} in an $1$-dimensional $C^1$-smooth submanifold $S_j$ of $\R^2$ for $j\in \N$.

Under increased regularity, Bourgain, Korobkov, and Kristensen \cite{BourgainEtAl:Sard} proved that if $f\in W^{2,1}(\R^2)$ then for a.e.\ $t\in \R$ the level set $f^{-1}(t)$ is an $1$-dimensional $C^1$-smooth manifold. We direct the reader to \cite{BourgainEtAl:Sard} and the references therein for a more detailed exposition of generalizations of Sard's theorem.

Our first theorem is the following:

\begin{theorem}\label{Intro:Sobolev:Theorem}
Let $\Omega\subset \R^2$ be an open set and $u\colon \Omega \to \R$ be a continuous function that lies in $W^{1,p}_{\loc}(\Omega)$ for some $1\leq p\leq \infty$. Then for a.e.\ $t\in \R$ the level set $u^{-1}(t)$ has locally finite Hausdorff $1$-measure and each component of $u^{-1}(t)$ is either a point, or a Jordan curve, or it is homeomorphic to an interval.
\end{theorem}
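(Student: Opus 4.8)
The plan is to separate the measure-theoretic content (finiteness of the length of level sets) from the topological content (structure of the individual components), and to settle the latter by means of a classical theorem of R.~L.~Moore on triods in the plane. Two preliminary reductions: since $W^{1,p}_{\loc}(\Omega)\subseteq W^{1,1}_{\loc}(\Omega)$ for every $p\geq 1$, I may assume $p=1$; and since $\Omega$ has at most countably many connected components, I may assume $\Omega$ is connected.

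\textbf{Step 1 (finite length of a.e.\ level set).} Using the coarea inequality for (continuous) Sobolev functions, applied to the continuous representative $u$, one has $\int_{\R}\mathcal H^1(u^{-1}(t)\cap A)\,dt\leq \int_A|\nabla u|\,dx<\infty$ for every open $A$ with $\overline A$ a compact subset of $\Omega$. Exhausting $\Omega$ by countably many such sets $A$, I conclude that for a.e.\ $t\in\R$ the level set $u^{-1}(t)$ has locally finite Hausdorff $1$-measure. Fix such a value $t$; call it \emph{admissible}, and fix also that $u^{-1}(t)$ has empty interior (true for a.e.\ $t$, since only countably many level sets can have positive Lebesgue measure).

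\textbf{Step 2 (local structure and a triod).} Let $K$ be a connected component of the closed set $u^{-1}(t)$ for an admissible $t$. If $K$ is compact, it is a continuum of finite $\mathcal H^1$-measure, hence arcwise connected and locally connected by the classical structure theory of sets of finite length; if $K$ is noncompact, the same reasoning applied to $K\cap \overline B$ for balls $B$ compactly contained in $\Omega$ shows $K$ is still locally connected, locally compact, separable, and metrizable. Now suppose $K$ is \emph{not} a point, a Jordan curve, or homeomorphic to an interval. By the topological classification (a connected $1$-manifold, with or without boundary, is homeomorphic to an interval or to $S^1$), $K$ must contain a point of order $\geq 3$, and therefore $K$ contains a \emph{triod}, i.e.\ a homeomorphic copy of the letter $\mathrm T$ (three arcs meeting only at a common endpoint).

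\textbf{Step 3 (Moore's theorem) and conclusion.} Call $t$ \emph{exceptional} if some component of $u^{-1}(t)$ fails to be a point, a Jordan curve, or homeomorphic to an interval; by Step 2, for each exceptional admissible $t$ we may choose a triod $T_t\subseteq u^{-1}(t)$. Since distinct level sets are pairwise disjoint, $\{T_t\}$ is a family of pairwise disjoint triods in $\R^2$, so by Moore's theorem (the plane contains no uncountable family of pairwise disjoint triods) there are only countably many exceptional admissible $t$. Deleting these from the conull set of admissible values yields a conull set of $t$ for which $u^{-1}(t)$ has locally finite Hausdorff $1$-measure and each of its components is a point, a Jordan curve, or homeomorphic to an interval, which is the assertion.

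\textbf{Main obstacle.} The step that I expect to require the most care is Step 1: the coarea estimate must control the \emph{topological} level sets $u^{-1}(t)$ of the continuous representative, not merely the reduced boundaries $\partial^*\{u>t\}$, so one must use the version of the coarea formula adapted to continuous Sobolev functions and rule out that $u^{-1}(t)$ is substantially larger than $\partial^*\{u>t\}$ for a non-negligible set of $t$. The topological input of Steps 2--3 is classical once Moore's triod theorem is invoked; the only delicate point there is to treat noncompact components $K$ (arising when $\Omega\neq\R^2$ or $\Omega$ is unbounded) on the same footing as the compact ones, which is why one passes to the local statements and to the local existence of a triod.
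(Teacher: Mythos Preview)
Your outline matches the paper's: both use the coarea formula, the fact that continua of finite $\mathcal H^1$-measure are locally connected, and Moore's triod theorem (the paper's Lemma~2.4 is precisely the statement that a Peano space which is not a point, arc, or Jordan curve contains a junction point).

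The genuine gap is in Step~2, in the noncompact case. Your sentence ``the same reasoning applied to $K\cap\overline B$ \ldots\ shows $K$ is still locally connected'' does not work as written: $K\cap\overline B$ need not be connected, so the structure theorem for finite-length continua does not apply to it directly. One must show that the component of $K\cap\overline B$ through a given point $x$ already contains a full $K$-neighborhood of $x$, i.e.\ rule out other components of $K\cap\overline B$ accumulating at $x$; this uses the finiteness of $\mathcal H^1(K\cap\overline B)$ together with a separation argument forcing each such component to reach $\partial B$ and hence to have length bounded below. Likewise, the inference ``$K$ not a point, Jordan curve, or interval $\Rightarrow$ $K$ contains a triod'' is immediate only for compact $K$; for noncompact $K$ you are implicitly asserting that a connected, locally connected, locally compact, second-countable space with no branch point is a $1$-manifold with boundary, and that is not just the classification of $1$-manifolds---it requires proof. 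The paper's entire ``compact exhaustion argument'' is devoted to exactly these two points: building the component $E$ as an increasing union of arcs $E_n\subset A_t\cap\overline{U_n}$, showing the limiting parametrization $\phi\colon I\to\bigcup_n E_n$ is a homeomorphism (no accumulation in $\Omega$, via the length bound), and showing $E=\bigcup_n E_n$ (no stray branches $\mathcal F_i$, again via the length bound). You correctly flag the noncompact case as delicate in your final paragraph, but this is where the real work lies, and it is not more routine than your Step~1.
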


This result generalizes a result of Alberti, Bianchini, and Crippa \cite[Theorem 2.5(iv)]{AlbertiEtAl:LipschitzLevelSets}, who obtained the same conclusion under the weaker assumptions that $u$ is Lipschitz and compactly supported.

Under no further regularity assumptions, we do not expect in this case the level sets to be $1$-dimensional manifolds. Instead, we pose some topological restriction:

\begin{definition}\label{Monotone:Definition}
Let $\Omega\subset \R^n$ be an open set and $f\colon \Omega\to \R$ be a continuous function. We say that $f$ is monotone if for each open set $U\subset \subset \Omega$ the maximum and the minimum of $f$ on $\br U$ are attained on $\partial U$. That is,
\begin{align*}
\max_{ \br U}f= \max_{\partial U}f \,\,\, \textrm{and} \,\,\, \min_{\br U}f=\min_{\partial U}f.
\end{align*}  
\end{definition}
Here the notation $U\subset \subset \Omega$ means that $\br U$ is compact and is contained in $\Omega$. This definition can also be extended to real-valued functions defined in a locally compact metric space $X$. 
\begin{remark}\label{Monotone:Remark}
If $f$ extends continuously to $\br \Omega$, then we may take $U\subset\subset \br \Omega$ in the above definition.
\end{remark}

Monotonicity in Definition \ref{Monotone:Definition} is in the sense of Lebesgue \cite{Lebesgue:Monotone}. There are other more general notions of monotonicity; e.g.\ there is a notion of \textit{weak monotonicity} due to Manfredi \cite{Manfredi:Monotone} that agrees with Lebesgue monotonicity for the spaces $W^{1,p}$, $p\geq 2$. We now state our main theorem:

\begin{theorem}\label{Intro:MonotoneSobolev:Theorem}
Let $\Omega\subset \R^2$ be an open set and $u\colon \Omega \to \R$ be a continuous monotone function that lies in $W^{1,p}_{\loc}(\Omega)$ for some $1\leq p\leq \infty$. Then for a.e.\ $t\in \R$ the level set $u^{-1}(t)$ has locally finite Hausdorff $1$-measure and it is an embedded $1$-dimensional topological submanifold of $\R^2$.
\end{theorem}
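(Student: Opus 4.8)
The plan is to bootstrap Theorem \ref{Intro:Sobolev:Theorem} using the monotonicity hypothesis to rule out the two pathological phenomena that prevent a disjoint union of points, Jordan curves, and arcs from being a $1$-manifold: namely, isolated points of a level set (which are $0$-manifolds sitting inside a set that locally fails to be locally Euclidean of dimension $1$) and components that are arcs whose closure meets other components, or whose endpoints accumulate, or which fail to be properly embedded. Concretely, fix a good value $t$ as furnished by Theorem \ref{Intro:Sobolev:Theorem}, so that $K:=u^{-1}(t)$ has locally finite $\mathcal H^1$-measure and each component is a point, a Jordan curve, or homeomorphic to an interval. I would first show that for a monotone $u$ and a.e.\ $t$ the level set $K$ has no isolated points: if $x$ were isolated in $K$, then on a small circle $\partial B(x,r)\subset\subset\Omega$ the function $u$ omits the value $t$, so by monotonicity $u>t$ everywhere on $\br B(x,r)$ or $u<t$ everywhere there, contradicting $u(x)=t$. (More care is needed because a single component could be a point without being isolated; but a non-isolated point-component is not an obstruction to being a $1$-manifold only if there are none such — so in fact I must upgrade this to: no component of $K$ is a single point, for a.e.\ $t$.) The same circle argument shows that a point-component $\{x\}$ forces $u-t$ to change sign on every small sphere around $x$, hence $x$ is a local max or min of neither strictly, and one derives that $x$ lies in a larger connected piece of $K$ — ruling out point-components outright, again using monotonicity and continuity.

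Next I would treat the arc components. The potential failures are: (a) an arc component $\gamma\approx(0,1)$ that is not closed in $\Omega$, i.e.\ an endpoint of its closure lies in $\Omega$; (b) two distinct arc/Jordan components whose closures touch; (c) a component homeomorphic to a half-open interval with its endpoint in $\Omega$, around which $K$ is not locally a manifold. For (a) and (c): if $p\in\Omega$ is a limit of $\gamma$ but $p\notin\gamma$ (or $p$ is the endpoint of a half-open arc), then $u(p)=t$ by continuity, so $p\in K$; since $p$ is not in the \emph{same} component as the rest of $\gamma$ near $p$ only if $\gamma\cup\{p\}$ is disconnected, which is impossible for the closure of a connected set — so $p$ lies in the component of $\gamma$, meaning that component is closed and contains $p$ as an endpoint, i.e.\ a half-open arc with endpoint $p$ in $\Omega$. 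Around such $p$ I would again run the circle argument: on small $\partial B(p,r)$ the arc meets the circle in (generically, for a.e.\ $r$) finitely many points since $\mathcal H^1(K)<\infty$ locally, and a half-open arc entering $B(p,r)$ and not leaving must have its ``loose end'' accumulate only at $p$; but then $u-t$ has a one-sided sign behavior that contradicts monotonicity exactly as before — a dangling arc end creates a point (namely $p$, or points just off $\gamma$) that is a local extremum of $u$ in the interior. I expect that the co-area formula together with local finiteness of $\mathcal H^1(K)$ lets me choose, for a.e.\ $t$, the level set so that these circle slices are well-behaved for a.e.\ radius, which is the engine that makes the topological arguments quantitative.

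For (b), suppose two distinct components $C_1,C_2$ of $K$ have $p\in\br C_1\cap\br C_2\subset\Omega$. Then $p\in K$ and, being a point whose every neighborhood meets $K$ in a set containing arcs coming from two different components, $p$ is a point-component (it cannot be in $C_1$ and $C_2$ simultaneously) — but we already excluded point-components. Hence distinct components of $K$ have disjoint closures in $\Omega$, so $K$ is, locally in $\Omega$, a finite disjoint union of Jordan curves and closed arcs. A finite disjoint family of Jordan curves and \emph{closed} (properly embedded) arcs is a $1$-submanifold; combined with local finiteness of the collection (again from $\mathcal H^1_{\loc}(K)<\infty$ plus a compactness argument, since each component has $\mathcal H^1$-measure bounded below on any compact piece it meets), this gives that $K$ is an embedded $1$-dimensional topological submanifold of $\R^2$. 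The main obstacle, I anticipate, is rigorously excluding half-open arcs whose endpoint lies in $\Omega$: the naive circle argument must be made to work despite the arc possibly spiraling wildly near $p$, and the clean way is to show that monotonicity forces $u$ near $p$ to separate into $\{u>t\}$ and $\{u<t\}$ regions whose common boundary through $p$ must ``pass through,'' so the arc continues past $p$ — contradicting that $p$ was an endpoint. Making ``pass through'' precise, presumably via a degree/winding argument on small circles valid for a.e.\ radius where the slice is finite, is where the real work lies; everything else is point-set topology layered on Theorem \ref{Intro:Sobolev:Theorem} and the monotonicity definition.
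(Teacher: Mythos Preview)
Your overall strategy---bootstrap Theorem \ref{Intro:Sobolev:Theorem} and use monotonicity to rule out point components, compact arc components, and half-open arcs with an interior endpoint, while establishing local finiteness of the family of components---is the same as the paper's. But there is a genuine gap: you never prove the one lemma that drives all of these exclusions simultaneously, namely that for a monotone $u$, \emph{every} component $E$ of \emph{every} level set $u^{-1}(t)$ satisfies $\diam(E)\ge \dist(x,\partial\Omega)$ for each $x\in E$ (equivalently: $E$ either exits all compact subsets of $\Omega$, or some bounded complementary component of $E$ meets $\partial\Omega$). Your circle argument handles \emph{isolated} points of $K$, but for a non-isolated point-component $\{x\}$ it breaks down: other components of $K$ may cross every small circle about $x$, so $u-t$ can change sign on $\partial B(x,r)$ without contradiction. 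Your local-finiteness claim (``each component has $\mathcal H^1$-measure bounded below on any compact piece it meets'') is exactly what the diameter bound gives, but as stated it is circular---a point-component has measure zero, so you cannot use the bound to exclude point-components without first having excluded them. Once the diameter bound is in hand, point-components and $[0,1]$-components are ruled out immediately (neither separates the plane, so the second alternative fails; being compact in $\Omega$, the first fails too), and local finiteness of components follows from $\mathcal H^1_{\loc}(K)<\infty$ together with $\mathcal H^1(E\cap \overline U)\ge \diam(E\cap \overline U)\ge \dist(\overline V,\partial U)>0$ for nested $V\subset\subset U\subset\subset\Omega$.

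For the $[0,\infty)$-endpoint case you correctly identify the obstacle and propose a degree/winding argument on generic circles. The paper's route is cleaner and avoids this entirely: once components are relatively open in $K$ (an immediate consequence of local finiteness, hence of the diameter bound), one straightens the arc near the endpoint $x_0$ so that a small topological disk $U$ has $U\setminus E$ connected; since $U\cap K=U\cap E$ by relative openness, $u-t$ has constant sign on $U\setminus K$, making $t$ a local extremal value of $u$. Excluding the countably many local extremal values (a separate elementary lemma you do not invoke) then finishes the argument. Your proposed winding approach might be made to work, but it would have to contend with the same interference from nearby components that undermines your point-component argument---and the fix is again the diameter bound and the relative openness it yields.
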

Recall that a set $J\subset \R^2$ is an embedded $1$-dimensional sumbanifold of $\R^2$ if for each point $x\in J$ there exists an open set $U$ in $\R^2$ and a homeomorphism $\phi\colon U\to \R^2$ such that $\phi(J)=\R$. By the classification of $1$-manifolds (see Theorem \ref{Classification:theorem}), each component of $J$ is homeomorphic to $S^1$ or to $\R$.

Monotone Sobolev functions appear as infimizers of certain energy functionals, not only in the Euclidean setting, such as $\mathcal A$-harmonic functions \cite{Heinonenetal:DegenerateElliptic}, but also in metric spaces. For example, they appear as real and imaginary parts of ``uniformizing" quasiconformal mappings into the plane from metric spaces $X$ that are homeomorphic to the plane; see \cite{Rajala:uniformization}, \cite{RajalaRomney:reciprocal}. Our proof is partly topological and can be applied also in these settings, leading to the understanding of the level sets of the ``uniformizing" map, which is crucial for proving injectivity properties.  The results in this paper can be used to simplify some of the arguments in \cite{Rajala:uniformization}. More specifically, our techniques yield the following result in the metric space setting.

\begin{theorem}\label{Intro:metric:theorem}
Let $(X,d)$ be a metric space homeomorphic to $\R^2$ and $\Omega$ be an open subset of $X$. Suppose that $u\colon \Omega \to \R$ is a continuous function with the property that for a.e.\ $t\in \R$ the level set $u^{-1}(t)$ has locally finite Hausdorff $1$-measure (in the metric of $X$). Then for a.e.\ $t\in \R$ each component of the level set $u^{-1}(t)$ is either a point, or a Jordan curve, or it is homeomorphic to an interval. 

If, in addition, $u$ is monotone, then for a.e.\ $t\in \R$ the level set $u^{-1}(t)$ is an embedded $1$-dimensional topological submanifold of $X$.
\end{theorem}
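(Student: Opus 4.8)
The plan is to isolate the purely topological content of Theorems~\ref{Intro:Sobolev:Theorem} and~\ref{Intro:MonotoneSobolev:Theorem} and to observe that in their proofs the Euclidean metric is used \emph{only} to guarantee, via the coarea inequality for Sobolev functions, that $u^{-1}(t)$ has locally finite $\mathcal{H}^1$-measure for a.e.\ $t$ --- which in Theorem~\ref{Intro:metric:theorem} is a hypothesis --- so I would run the same argument with $\R^2$ replaced by $X$. First I would fix a set of ``good'' values $t$ of full measure: by hypothesis the set where $u^{-1}(t)$ has locally finite $\mathcal{H}^1$-measure is co-null; to this I add the (always co-countable) set where $u^{-1}(t)$ has empty interior --- distinct values give disjoint open sets and $X$ is second countable --- and the (always co-countable) set of $t$ that are not local extremum values of $u$, using that the local extremum values of a continuous function on a second countable, locally compact space are at most countable (each such value injects into a basic relatively compact open set on whose closure $u$ attains its correspondingly one-sided bound). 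Finally I would invoke the part of the proof of Theorem~\ref{Intro:Sobolev:Theorem} showing that outside a further Lebesgue-null set $u^{-1}(t)$ contains no branch point; throughout, the finiteness of $\mathcal{H}^1$ enters only to furnish local connectivity, finite slices, and this absence of branch points, in each case in a way insensitive to whether the ambient surface is $\R^2$ or $X$. Fix such a good $t$ and set $K=u^{-1}(t)$.

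Next I would analyze $K$ locally. Since $X$ is a surface, every $p\in\Omega$ has a neighborhood basis of Jordan domains, obtained by pulling back round disks through a chart $\psi\colon N\to\R^2$; and applying the coarea inequality to the $1$-Lipschitz function $x\mapsto d(x,p)$ --- valid in any metric space --- the sphere $S(p,r)$ meets $K$ in finitely many points for a.e.\ small $r$. Reconciling these two families yields arbitrarily small Jordan domains $V\ni p$ with $\overline{V}\subset\Omega$ and $\partial V\cap K$ finite; on each of the finitely many boundary arcs cut out by $\partial V\cap K$ the function $u-t$ is nonzero, hence of constant sign. Because a nondegenerate continuum of finite $\mathcal{H}^1$-measure is a Peano continuum, $K$ is locally a Peano continuum at each of its points (noncompact components handled by exhaustion), so the Menger--Urysohn order of every $p\in K$ is defined. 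Combining the sign pattern on $\partial V$ with goodness of $t$ through an intermediate-value argument, I would show every point of $K$ has order at most $2$: an isolated point or an endpoint of an arc would force $u-t$ to keep a fixed sign on $V\setminus K$ near $p$, making $t$ a local extremum value, while a branch point of odd order would force two adjacent complementary sectors at $p$ to carry the same sign, again producing a local extremum value, and the remaining (even-order) branch points are exactly those excluded by the null-set step. Then the classification of $1$-manifolds (Theorem~\ref{Classification:theorem}), with the classical fact that a Peano continuum all of whose points have order at most $2$ is a point, a Jordan curve, or an arc, gives the first assertion of Theorem~\ref{Intro:metric:theorem}.

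If in addition $u$ is monotone in the sense of Definition~\ref{Monotone:Definition}, I would upgrade ``order at most $2$'' to ``order exactly $2$'' and rule out accumulation of distinct components: an isolated point of $K$, an endpoint in $\Omega$ of an arc component of $K$, or a point of $\Omega$ at which two components of $K$ meet would each be surrounded by a small Jordan domain $V$ on whose entire boundary $u-t$ has a fixed nonzero sign while $u-t$ vanishes at an interior point of $V$, contradicting monotonicity of $u$ on $V$ and on its subdomains (Remark~\ref{Monotone:Remark} covering the case that $u$ extends to $\partial\Omega$). Hence near each of its points $K$ is a single arc with nothing else nearby, i.e.\ $K$ is an embedded $1$-dimensional topological submanifold of $X$.

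The hard part, as flagged, is the local topological analysis of the second and third paragraphs --- in particular reconciling the finite metric slices $S(p,r)\cap K$ with genuine Jordan-domain neighborhoods in a metric surface where spheres need not be curves, and then carrying out the order computation --- and this is precisely the point at which one must check that the proofs of Theorems~\ref{Intro:Sobolev:Theorem} and~\ref{Intro:MonotoneSobolev:Theorem} were topological rather than metric in character, the metric being confined to producing finite slices and to the (now hypothesized) local finiteness of $\mathcal{H}^1$.
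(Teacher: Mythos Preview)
Your overall plan---observe that the proofs of Theorems~\ref{Intro:Sobolev:Theorem} and~\ref{Intro:MonotoneSobolev:Theorem} use the Sobolev hypothesis only to secure locally finite $\mathcal{H}^1$ on a.e.\ level set, and that every remaining step is planar topology that transports to any surface $X\cong\R^2$---is exactly the paper's proof: see Remarks~\ref{remark:metric spaces} and~\ref{remark:metric spaces monotone}. The paper simply records this and stops; you attempt to redo the details, and in doing so introduce two tangles worth flagging.

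First, your second paragraph conflates the two parts. For the \emph{non-monotone} assertion the conclusion explicitly allows components that are points or closed arcs, so there is nothing to gain by arguing that order~$0$ or order~$1$ points force $t$ to be a local extremum value; and your parity analysis of branch orders is likewise unnecessary, since Moore's triod theorem (which you already invoke via the proof of Theorem~\ref{Intro:Sobolev:Theorem}) kills \emph{all} junction points, odd or even order alike. The paper's route to the first assertion is simply: finite $\mathcal{H}^1$ makes each compact piece a Peano continuum; Moore's theorem leaves at most countably many $t$ with a junction point; Lemma~\ref{Junction:Lemma} then forces each compact Peano component to be a point, arc, or Jordan curve; and a compact-exhaustion argument handles noncompact components. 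Your sign-pattern arguments also tacitly assume that near the point under study $K$ consists only of the arcs you have drawn---i.e.\ that components are relatively open---which is not available in the non-monotone case.

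Second, in the monotone paragraph your phrase ``a point at which two components of $K$ meet'' is not the obstruction (two components meeting at a point are one component); what must be ruled out is accumulation of distinct components at an interior point. The paper handles this as Lemma~\ref{MonotoneSobolev:Lemma}: monotonicity gives every component through a precompact set $V$ a diameter bounded below by $\dist(\overline{V},\partial\Omega)$ (Lemma~\ref{Monotonicity:Corollary}), so finite local $\mathcal{H}^1$ forces only finitely many such components. Once components are relatively open, your endpoint and isolated-point arguments---equivalently the paper's combination of Lemma~\ref{Monotonicity:Corollary} with the local-extremum exclusion of Lemma~\ref{Extremal:Lemma}---finish the proof that $K$ is an embedded $1$-manifold.
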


Monotone Sobolev functions enjoy some important further regularity properties. For example, if $f=(u_1,\dots,u_n)\colon \R^n\to \R^n$ is continuous and monotone, in the sense that the coordinate functions $u_i$ are, and $f\in W^{1,n}_{\loc}(\R^n)$, then $f$ is differentiable a.e.\ and it has the \textit{Luzin property}, i.e, it maps null sets to null sets; see \cite[Lemma 1.3]{HajlaszMaly:approximation}. For this reason, the approximation of Sobolev functions $u$ in the Sobolev norm by {locally weakly monotone} Sobolev functions $u_n$, $n\in \N$, has been established in \cite[Theorem 1.3]{HajlaszMaly:approximation}; a property of the approximants $u_n$ in this theorem that is important in applications is that the gradient of $u_n$ vanishes on the set $\{u\neq u_n\}$. As it is pointed out in that paper, in some cases the approximating functions cannot be taken to be smooth, not even continuous. 

As an application of Theorem \ref{Intro:MonotoneSobolev:Theorem} we give the following approximation theorem:

\begin{theorem}\label{Intro:Approximation:Theorem}
Let $\Omega\subset \R^2$ be a bounded open set and $u \colon \Omega\to \R$ be a continuous monotone function  that lies in $W^{1,p}(\Omega)$ for some $1< p <\infty$. Then there exists $\alpha>0$ and a sequence $\{u_n\}_{n\in \N}$ of monotone functions in $\Omega$ such that
\begin{enumerate}[\upshape(A)]
\item $u_n$ is $C^{1,\alpha}$-smooth in $\Omega$, $n\in \N$,
\item $u_n$ converges uniformly to $u$ in $\Omega$ as $n\to\infty$,
\item $u_n$ converges to $ u$ in $W^{1,p}(\Omega)$ as $n\to\infty$,
\item $\|\nabla u_n\|_{L^p(\Omega)}\leq \|\nabla u\|_{L^p(\Omega)}$, $n\in \N$,
\item $u_n-u \in W^{1,p}_0(\Omega)$, $n\in \N$.
\end{enumerate} 
In fact, $u_n$ may be taken to be $p$-harmonic on a subset of $\Omega$ having measure arbitrarily close to full and $C^\infty$-smooth, except at a discrete subset of $\Omega$. If $p=2$, then the functions $u_n$ can be taken to be $C^\infty$-smooth in $\Omega$.
\end{theorem}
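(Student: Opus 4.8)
The plan is to build the approximants $u_n$ by $p$-harmonic replacement on a well-chosen exhaustion of the ``good'' part of $\Omega$, and then to glue. First I would fix a small parameter $\varepsilon>0$ and, using Theorem \ref{Intro:MonotoneSobolev:Theorem} together with the coarea-type estimate $\int_\R \mathcal H^1(u^{-1}(t))\,dt<\infty$ (finite because $u\in W^{1,p}(\Omega)$ and $\Omega$ is bounded), select a finite collection of regular values $t_1<\dots<t_k$ of $u$ — values for which $u^{-1}(t_i)$ is an embedded $1$-dimensional submanifold with finite $\mathcal H^1$-measure — such that the ``bad'' superlevel shells $\bigcup_i\{|u-t_i|<\delta_i\}$ together with a neighborhood of $\partial\Omega$ have $L^p$-norm of $\nabla u$ less than $\varepsilon$ and Lebesgue measure less than $\varepsilon$. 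The complement $G$ of this bad set is an open set, each of whose components $V$ has topologically nice boundary (pieces of level curves of $u$, on which $u$ is constant, plus boundary of $\Omega$). On each such $V$ I would replace $u$ by the $p$-harmonic function $h_V$ with boundary data $u|_{\partial V}$; since $p$-harmonic functions minimize the $p$-energy among functions with the same boundary values, $\|\nabla h_V\|_{L^p(V)}\le \|\nabla u\|_{L^p(V)}$, which summed over components and combined with leaving $u$ untouched on the bad set gives (D), while the boundary matching gives $u_n-u\in W^{1,p}_0(\Omega)$, i.e.\ (E). The function so produced is $p$-harmonic on $G$, hence $C^{1,\alpha}$ there by interior regularity for the $p$-Laplacian (and $C^\infty$ when $p=2$, by elliptic regularity, since then $h_V$ is harmonic), and it is continuous and monotone because $p$-harmonic functions are monotone (maximum principle) and the gluing along level sets of $u$ preserves the max–min property — this is where the monotonicity hypothesis on $u$, via the structure of its level sets from Theorem \ref{Intro:MonotoneSobolev:Theorem}, is essential: the boundary data on $\partial V$ are locally constant along the level-curve portions, so no oscillation is introduced. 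Letting $\varepsilon=\varepsilon_n\to 0$ produces the sequence; uniform convergence (B) follows because on the bad set $u_n=u$ up to the glued pieces, and on each component $V$ the $p$-harmonic function with boundary data $u|_{\partial V}$ stays within $\osc_{\partial V} u$ of $u$, which is small since $\partial V$ lies in thin shells $\{|u-t_i|<\delta_i\}$; Sobolev convergence (C) follows from (D) plus (B) by a standard weak-compactness and lower-semicontinuity argument.

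To get the refined statement — $u_n$ smooth except at a discrete set and $p$-harmonic on a set of nearly full measure — I would, instead of a crude shell decomposition, triangulate the bad set into small squares and on each square interpolate smoothly, arranging the interpolation to agree with $p$-harmonic pieces across a common boundary; the non-smoothness is then concentrated at the finitely many (per compact subset) ``corner'' points where level curves meet the grid, which can be taken discrete. For $p=2$ one does not even need this: harmonic functions glued along $C^\infty$ level curves of a mollified version of $u$ can be arranged to be globally $C^\infty$ by first replacing $u$ on the bad set with a smooth monotone function (e.g.\ a convolution, which preserves monotonicity in the Lebesgue sense) and matching derivatives, since harmonicity is a closed condition compatible with $C^\infty$ matching across real-analytic interfaces.

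The main obstacle I anticipate is the gluing step: ensuring that the function obtained by patching $u$ (on the thin bad set) with the $p$-harmonic replacements (on the good components) is genuinely monotone and lies in $W^{1,p}(\Omega)$ with the correct boundary behavior. Membership in $W^{1,p}$ requires that the two definitions agree in a Sobolev (not merely pointwise) sense along the interface $\partial V$, which is why it is important that $u$ is constant along each level-curve component of $\partial V$ — so the trace of $h_V$ matches the trace of $u$ exactly, with no jump. Monotonicity of the glued function is the subtler point: one must verify that for every $U\subset\subset\Omega$, the max and min of $u_n$ on $\bar U$ are attained on $\partial U$; this I would prove by the strong maximum principle for $p$-harmonic functions applied inside each good component, combined with the monotonicity of $u$ on the bad set, checking that an interior extremum in a good component $V$ would force $h_V$ constant and hence propagate to $\partial V\cap\partial U$. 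Achieving the simultaneous control of all five properties (A)–(E) for a single sequence, rather than separately, is the real bookkeeping challenge, but each ingredient — coarea, $p$-harmonic replacement, interior regularity, maximum principle — is classical.
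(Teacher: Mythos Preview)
Your proposal shares the paper's starting idea---$p$-harmonic replacement on regions bounded by regular level curves of $u$, with the gluing lemma ensuring monotonicity---but there is a genuine gap that makes the construction fail to produce $C^{1,\alpha}$ functions.

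The problem is that on your ``bad'' set (the thin shells $\{|u-t_i|<\delta_i\}$ together with a neighborhood of $\partial\Omega$) you leave $u_n$ equal to $u$. But $u$ is only assumed continuous, so your $u_n$ is not $C^{1,\alpha}$ there, and (A) fails. Making the bad set small in measure is irrelevant for pointwise regularity. Your second paragraph tries to patch this by ``triangulating the bad set and interpolating smoothly'' or, for $p=2$, by mollifying $u$ on the bad set---but the parenthetical claim that convolution ``preserves monotonicity in the Lebesgue sense'' is false (the paper explicitly remarks that standard mollification destroys monotonicity), and the triangulation idea gives no mechanism for preserving monotonicity either.

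The paper's proof avoids this by never leaving $u$ untouched on a set of positive measure. In Step~1 it does $p$-harmonic replacement on every strip $u^{-1}((t_i,t_{i+1}))$, producing a function $v=u_\delta$ that is $p$-harmonic off the measure-zero set $\bigcup_i A_{t_i}$; but $v$ is still not smooth \emph{across} the interfaces $A_{t_i}$. This is the real difficulty, and it is exactly what your outline does not address. The paper's solution is a second round of $p$-harmonic replacement (the ``lens-type partition'', Step~2) on thin shells $v^{-1}((t_j^-,t_j^+))$ around each $A_{t_j}$: the key gain is that the new interfaces $v^{-1}(t_j^\pm)$ are now level sets of a function that is already $p$-harmonic nearby, so they can be chosen as $C^\infty$ curves with $|\nabla v|$ bounded away from zero in a neighborhood. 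Only then, in Step~3, can one do an explicit local smoothing (Lemmas~\ref{SmoothingArc:Lemma}--\ref{SmoothingCurve:Lemma}) along these smooth interfaces while keeping monotonicity, using the strict-monotonicity Gluing Lemma~\ref{GluingMonotonicity:Lemma}. The $C^{1,\alpha}$ regularity (and $C^\infty$ for $p=2$) then comes from the interior and boundary regularity of $p$-harmonic functions, with the residual non-$C^\infty$ points being the discrete set of critical points of the $p$-harmonic pieces. Your outline essentially stops after Step~1 and so misses the mechanism that actually produces global regularity.
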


Here, a function $f\colon \Omega\to \R$ is $C^{1,\alpha}$-smooth if it has derivatives of first order that are locally $\alpha$-H\"older continuous. 

We remark that standard mollification with a smooth kernel does not produce a monotone function and one has to use the structure of the level sets of the monotone function in order to construct its approximants. Therefore, Theorem \ref{Intro:MonotoneSobolev:Theorem} provides us with a powerful tool in this direction.

For our proof we follow the strategy of \cite{IwaniecKovalevOnninen:W1papproximation} and \cite{IwaniecKovalevOnninen:W12approximation}, where it is proved that Sobolev homeomorphisms of planar domains can be approximated by $C^\infty$-smooth diffeomorphisms uniformly and in the Sobolev norm. In our proof we have to deal with further technicalities, not present in the mentioned results for homeomorphisms, which are related to the fact that our approximants $u_n$ might have critical points. In fact, $u_n$ will be $p$-harmonic in a large subset of $\Omega$ and it is known that at a critical point a $p$-harmonic function, $p\neq 2$, is only expected to be $C^{1,\alpha}$-smooth, rather than $C^{\infty}$-smooth.

The main motivation for Theorem \ref{Intro:Approximation:Theorem} was to study regularity properties of a certain type of infimizers that appear in the setting of Sierpi\'nski carpets and are called \textit{carpet-harmonic functions}; see \cite[Chapter 2]{Ntalampekos:CarpetsThesis}. Namely, these infimizers are restrictions of monotone Sobolev functions (under some geometric assumptions) and the approximation Theorem \ref{Intro:Approximation:Theorem} would imply some absolute continuity properties for these functions. We will not discuss these applications any further in this paper.

We pose some questions for further study. One of the reasons that we were not able to prove approximation by smooth functions for all $p\in (1,\infty)$ in Theorem \ref{Intro:Approximation:Theorem} was the presence of critical points of $p$-harmonic functions. 
\begin{question}
Can a $p$-harmonic function be approximated uniformly and in the $W^{1,p}$ norm by $C^\infty$-smooth monotone functions near a critical point, when $p\neq 2$?
\end{question} 
A positive answer to this question would imply that we may use smooth functions in Theorem \ref{Intro:Approximation:Theorem}. It would be very surprising if this fails.  Moreover, what can one say for higher dimensions?
\begin{question}
Do analogs of Theorems \ref{Intro:MonotoneSobolev:Theorem} and \ref{Intro:Approximation:Theorem} hold in higher dimensions?
\end{question}
The smooth approximation of Sobolev homeomorphisms is still open in higher dimensions (\cite[Question 1.1]{IwaniecKovalevOnninen:W12approximation}, \cite{FoundCompMath:book-Ball}). Since the coordinate functions of a homeomorphism are monotone, a first step in studying this problem would be to study the smooth approximation of monotone Sobolev functions as in the above question.

The paper is structured as follows. In Section \ref{Section:Level} we study the level sets of Sobolev functions. In particular, in Subsection \ref{Section:Sobolev} we prove Theorem \ref{Intro:Sobolev:Theorem} and in Subsection \ref{Section:levelmonotone} we prove Theorem \ref{Intro:MonotoneSobolev:Theorem} and discuss the generalization that gives Theorem \ref{Intro:metric:theorem}. In Subsection \ref{Section:glue} we include some gluing results for monotone functions that are used in the proof of the approximation Theorem \ref{Intro:Approximation:Theorem}, but also are of independent interest. Section \ref{Section:Preliminaries} contains preliminaries on $p$-harmonic functions. The approximation Theorem \ref{Intro:Approximation:Theorem} is proved in Section \ref{Section:approximationTheorem}. Finally, in Section \ref{Section:Smoothing} we include some quite standard smoothing results for monotone functions that are needed for the proof of the approximation theorem.

\subsection*{Acknowledgments}
The author would like to thank Tadeusz Iwaniec for a motivating discussion and Matthew Romney for his comments on the manuscript.

\section{Level sets of Sobolev and monotone functions}\label{Section:Level}

Throughout the entire section we assume that $u\colon \Omega\to \R$ is a non-constant continuous function on an open set $\Omega\subset \R^2$. We define $A_t=u^{-1}(t)$ for $t\in \R$, which can be the empty set. For $ s<t$ we define $A_{s,t}=u^{-1}((s,t))$. 

A \textit{Jordan arc} in a metric space $X$ is the image of the interval $[0,1]$ under a homeomorphic embedding $\phi \colon [0,1] \to X$. In this case, the set $\phi((0,1))$ is called an \textit{open Jordan arc}. Finally, a \textit{Jordan curve} is the image of $S^1$ under a homeomorphic embedding $\phi\colon S^1\to X$. 

\subsection{Level sets of Sobolev functions}\label{Section:Sobolev}

In this subsection we study the level sets of Sobolev functions and prove Theorem \ref{Intro:Sobolev:Theorem}.

\begin{lemma}\label{FiniteLength:Lemma}
Suppose that $u\in W^{1,p}(\Omega)$ for some $1\leq p \leq \infty$ and $\Area(\Omega)<\infty$. Then for a.e.\ $t\in \R$ the level set $A_t$ has finite Hausdorff $1$-measure.
\end{lemma}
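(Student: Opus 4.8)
The plan is to reduce the lemma to a single coarea-type inequality and then estimate the relevant quantity. Since $\Area(\Omega)<\infty$, H\"older's inequality gives $W^{1,p}(\Omega)\subset W^{1,1}(\Omega)$, so we may assume $p=1$. It then suffices to prove that there is a constant $C$ (even $C=1$, but this is immaterial) with
$$
\int_{\R}\mathcal H^1(A_t)\,dt\ \le\ C\int_\Omega|\nabla u|\,dx ,
$$
since the right-hand side is finite by hypothesis, and hence $\mathcal H^1(A_t)<\infty$ for a.e.\ $t\in\R$. (Measurability of $t\mapsto\mathcal H^1(A_t)$ is not an issue, as $\mathcal H^1_{\delta}(A_t)$ is measurable in $t$ and increases to $\mathcal H^1(A_t)$ as $\delta\downarrow 0$.)

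To prove the displayed inequality I would pass to the graph $\Gamma=\{(x,u(x)):x\in\Omega\}\subset\R^3$. The map $\Phi(x)=(x,u(x))$ lies in $W^{1,1}(\Omega;\R^3)$, is approximately differentiable a.e., and has Jacobian $J_\Phi=\sqrt{1+|\nabla u|^2}\ge 1$; since $J_\Phi$ never vanishes, the area-formula inequality for approximately differentiable maps yields $\mathcal H^2(\Gamma)\le\int_\Omega\sqrt{1+|\nabla u|^2}\,dx\le \Area(\Omega)+\int_\Omega|\nabla u|\,dx<\infty$. Applying the coarea inequality (Eilenberg's inequality) to the $1$-Lipschitz projection $\pi\colon\Gamma\to\R$, $(x,z)\mapsto z$, and noting $\pi^{-1}(t)\cap\Gamma=A_t\times\{t\}$, we obtain $\int_\R\mathcal H^1(A_t)\,dt=\int_\R\mathcal H^1(\Gamma\cap\pi^{-1}(t))\,dt\le C\,\mathcal H^2(\Gamma)<\infty$, as desired.

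A more hands-on alternative, avoiding the machinery above, is a direct discretization. For $\delta>0$ let $\mathcal G_\delta$ be the grid of closed axis-parallel squares of side $\delta$. Then $A_t$ is covered by the squares of $\mathcal G_\delta$ meeting it, so $\mathcal H^1_{\sqrt2\delta}(A_t)\le\sqrt2\,\delta\cdot\#\{Q\in\mathcal G_\delta:Q\cap A_t\neq\emptyset\}$; integrating in $t$ and using Fubini (the Banach indicatrix identity on each square) gives $\int_\R \#\{Q\in\mathcal G_\delta:Q\cap A_t\ne\emptyset\}\,dt=\sum_{Q\in\mathcal G_\delta}\osc_{Q\cap\Omega}u$, and one lets $\delta\to0$ using monotone convergence, since $\mathcal H^1_{\sqrt2\delta}(A_t)\uparrow\mathcal H^1(A_t)$. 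The main obstacle, in this approach, is exactly the estimate $\limsup_{\delta\to0}\delta\sum_{Q\in\mathcal G_\delta}\osc_{Q\cap\Omega}u\le C\int_\Omega|\nabla u|\,dx$: the pointwise bound $\osc_Q u\lesssim\delta^{-1}\int_Q|\nabla u|$ is false (a thin tall spike supported inside a single square), so one cannot argue scale by scale. Instead one must exploit the absolute continuity of $u$ along almost every horizontal and vertical line to control $\osc_{Q}u$ by one-dimensional variation integrals $\int|\partial_i u|$ over segments in $Q$, handle the exceptional null set of lines (for instance by randomizing the offset of the grid so that its edges lie on good lines), and then pass to the limit using the absolute continuity of the integral of $|\nabla u|$. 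In either route the crux is the same: bounding a one-dimensional measurement of the level sets by the $L^1$-norm of the gradient.
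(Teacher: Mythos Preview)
The paper's own proof is a one-liner: it invokes the coarea formula for Sobolev functions (citing Mal\'y--Swanson--Ziemer, attributed to Federer), which gives directly
\[
\int_{\R}\mathcal H^{1}(u^{-1}(t))\,dt=\int_{\Omega}|\nabla u|\le \|\nabla u\|_{L^{p}(\Omega)}\,\Area(\Omega)^{1/p'}<\infty,
\]
and the conclusion follows. Your proposal is therefore not the paper's argument; rather, you are attempting to \emph{reprove} the coarea inequality by two different routes.

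There is a genuine gap in your first (graph) approach. You assert that the ``area-formula inequality for approximately differentiable maps'' yields $\mathcal H^{2}(\Gamma)\le\int_{\Omega}\sqrt{1+|\nabla u|^{2}}$. This is the wrong direction. For a $W^{1,1}$ map one has the Federer decomposition $\Omega=\bigcup_k E_k\cup Z$ with $\Phi|_{E_k}$ Lipschitz and $|Z|=0$; the area formula on $\bigcup_k E_k$ together with injectivity of $\Phi$ gives
\[
\int_{\Omega}J_{\Phi}\,dx=\mathcal H^{2}\bigl(\Phi(\Omega\setminus Z)\bigr)\le \mathcal H^{2}(\Gamma),
\]
i.e.\ the \emph{reverse} inequality. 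To get your direction you would need $\mathcal H^{2}(\Phi(Z))=0$, that is, the Luzin~(N) property for the graph map $\Phi$ with respect to $\mathcal H^{2}$; you have not established this, and it is not automatic for continuous $W^{1,1}$ functions in the plane. (The remark ``since $J_{\Phi}$ never vanishes'' does not help: nonvanishing Jacobian gives the change-of-variables identity on the approximate-differentiability set, not control of the image of the exceptional null set.) Once this step fails, Eilenberg's inequality applied to $\Gamma$ no longer bounds $\int_{\R}\mathcal H^{1}(A_t)\,dt$; at best it bounds $\int_{\R}\mathcal H^{1}(A_t\setminus Z)\,dt$, and you have no control on $\mathcal H^{1}(A_t\cap Z)$.

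Your second (discretization) approach is only sketched, and you correctly identify the crux --- the estimate $\limsup_{\delta\to 0}\delta\sum_{Q}\osc_{Q\cap\Omega}u\le C\int_{\Omega}|\nabla u|$ --- without carrying it out. That estimate is essentially the coarea inequality in disguise, so this route is not really more elementary. The cleanest fix is simply to cite the coarea formula for $W^{1,1}$ functions, as the paper does.
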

\begin{proof}
This follows from the co-area formula \cite[Theorem 1.1]{MalySwansonZiemer:Coarea}, which is attributed to Federer \cite[Section 3.2]{Federer:gmt}, and the $L^p$-integrability of $\nabla u$:
\begin{align*}
\int \mathcal H^1(u^{-1}(t))\, dt= \int_\Omega |\nabla u| \leq  \|\nabla u\|_{L^p(\Omega)} \cdot \Area(\Omega)^{1/p'}<\infty,
\end{align*}
where $\frac{1}{p}+\frac{1}{p'}=1$.
\end{proof}

Now, we restate and prove Theorem \ref{Intro:Sobolev:Theorem}.

\begin{theorem}\label{Jordan:Theorem}
Suppose that $u\in W^{1,p}_{\loc}(\Omega)$ for some $1\leq p\leq \infty$. Then for a.e.\ $t\in \R$ the level set $A_t$ has locally finite Hausdorff $1$-measure and each component $E$ of $A_t$ is either a point, or a Jordan curve, or it is homeomorphic to an interval. 
\end{theorem}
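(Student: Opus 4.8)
The plan is to separate the measure-theoretic part from the purely topological core. First I would handle the length: exhausting $\Omega$ by open sets $\Omega_k\subset\subset\Omega$ with $\Area(\Omega_k)<\infty$, applying Lemma \ref{FiniteLength:Lemma} to each $u|_{\Omega_k}\in W^{1,p}(\Omega_k)$, and taking a countable union, one gets that for a.e.\ $t$ the level set $A_t$ has locally finite Hausdorff $1$-measure; fix such a "good" value $t$ and write $A=A_t$, $U^+=\{u>t\}$, $U^-=\{u<t\}$ (note $A$ has empty interior, being Lebesgue-null). For such $t$, each component $E$ of $A$ is a connected, relatively closed subset of $\Omega$ with locally finite $\mathcal H^1$-measure; by the classical fact that a continuum of finite $\mathcal H^1$-measure is a locally connected, arcwise connected Peano continuum (applied to $A\cap\br B$ for small disks $B\subset\subset\Omega$, with the length bound ruling out infinitely many components of $E\cap\br B$ reaching $\partial B$), $E$ is locally connected and locally arcwise connected, hence a one-dimensional Peano continuum (or a non-compact analogue). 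By the classification of one-dimensional continua (Theorem \ref{Classification:theorem} and the classical theory of regular curves), such a space in which every point has order at most $2$ is homeomorphic to a point, to $S^1$, or to an interval. So the theorem reduces to showing that, for a.e.\ $t$, \emph{no component of $A_t$ has a point of order $\ge 3$} (a branch point).

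For the main step I would argue by "counting" the levels $t$ at which a branch point can occur, and show this set is countable. Suppose $x$ is a branch point of a component $E$ of a good $A_t$: there are three arcs in $E$ issuing from $x$ and meeting pairwise only in $x$. Since the other components of $A_t$ are closed in $\Omega$ and miss $x$, one can choose $r_0>0$ with $\br B(x,r_0)\subset\Omega$ and $A_t\cap B(x,r_0)\subset E$; then $B(x,r_0)\setminus A_t=B(x,r_0)\setminus E$, so $u-t$ has constant sign on each of its components, the components of $U^+\cap B(x,r_0)$ are exactly the positive ones, and at least three of them have $x$ in their closure (the three arcs, in cyclic order, cut a small disk at $x$ into at least three local regions). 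Two of these local regions, reached from $x$ through "gaps" $g_1,g_2$, have the same sign; say it is positive. Pick two of the three arcs at $x$ so that $g_1,g_2$ lie on opposite cyclic sides of their union, and shrink $r_0$ below the lengths of these two arcs; then inside any ball $B(x,\rho)$ centred at $x$ these two arcs form a crosscut (from $\partial B(x,\rho)$ through $x$ back to $\partial B(x,\rho)$), hence separate $g_1\cap B(x,\rho)$ from $g_2\cap B(x,\rho)$. Now fix a ball $B^*=B(\xi,\rho)$ with $\xi\in\Q^2$, $\rho\in\Q^+$, $x\in B^*\subset B(x,r_0)$, $\br B^*\subset\Omega$, and points $q_1,q_2$ from a fixed countable dense subset $Q$ of $\Omega$ with $q_i\in g_i\cap B^*$ near $x$; then $u(q_i)>t$ and $q_1,q_2$ lie in distinct components of $U^+\cap B^*$.

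The counting then comes from monotonicity. For $s<\sigma:=\min(u(q_1),u(q_2))$ put $\psi(s)=1$ if $q_1,q_2$ lie in the same component of $\{u>s\}\cap B^*$, and $\psi(s)=0$ otherwise. Then $\psi$ is non-increasing (a component of $\{u>s'\}\cap B^*$ is contained in one of $\{u>s\}\cap B^*$ for $s<s'$); $\psi(s)=0$ for $s\in[t,\sigma)$ (since $\{u>s\}\cap B^*\subset\{u>t\}\cap B^*$ already separates $q_1$ from $q_2$); and $\psi(s)=1$ for all $s<t$ (the components of $\{u>t\}\cap B^*$ through $q_1$ and $q_2$ both have $x$ in their closure, and for small $\eta$ one has $B(x,\eta)\subset\{u>s\}\cap B^*$, so the union of these components with $B(x,\eta)$ is a connected subset of $\{u>s\}\cap B^*$ containing $q_1,q_2$). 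Hence $\psi$ drops from $1$ to $0$ exactly at $t$, so $t$ is uniquely determined by the triple $(B^*,q_1,q_2)$. Running the symmetric argument with $\{u<s\}$ when the repeated sign is negative, the set of $t$ at which $A_t$ has a branch point lies in a countable set, hence has measure zero; together with Step 1 this gives the theorem.

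The main obstacle is the bookkeeping in the middle step: rigorously extracting, at a putative branch point, the "local regions" of $B(x,r_0)\setminus E$ with their constant signs, and producing a crosscut that separates two equally-signed ones inside an \emph{arbitrarily small} ball about $x$ --- in particular this must cope with the case where the two positive "prongs" of $U^+$ at $x$ reconnect far away from $x$, which is exactly what the localization to $B^*$ and the crosscut are designed to neutralize. Steps involving only Lemma \ref{FiniteLength:Lemma}, the classification of one-dimensional continua, and the monotonicity of $\psi$ are comparatively routine once the classical facts about continua of finite $\mathcal H^1$-measure are invoked.
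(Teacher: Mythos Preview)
Your overall structure matches the paper's: reduce via the co-area formula to levels with locally finite $\mathcal H^1$-measure, use that a continuum of finite $\mathcal H^1$-measure is a Peano space, and then show that for all but countably many such $t$ the level set $A_t$ has no branch point (triod). The counting step, however, is carried out very differently. The paper simply observes that the triods sitting inside distinct levels $A_t$ are pairwise disjoint compact subsets of the plane and invokes Moore's theorem (\cite{Moore:triods}) that the plane contains no uncountable family of pairwise disjoint triods. You instead try to encode each bad level $t$ by a rational triple $(B^*, q_1, q_2)$ together with the unique jump of a monotone $\{0,1\}$-valued function $\psi$.

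There is a genuine gap in your encoding. The assertion ``one can choose $r_0>0$ with $A_t\cap B(x,r_0)\subset E$'' is not justified: knowing that each individual component of $A_t$ other than $E$ misses $x$ does \emph{not} imply that their union stays out of a neighborhood of $x$; components of a closed planar set may accumulate onto one another. (In the paper this openness-of-components property is established only under the extra hypothesis that $u$ is monotone --- Lemma~\ref{MonotoneSobolev:Lemma} --- and is not available in the present generality.) Without it you cannot guarantee the existence of the local regions $g_1,g_2\subset B(x,r_0)\setminus A_t$ with $x$ in their closures, so the key claim ``$\psi(s)=1$ for all $s<t$'' (which rests on the components of $\{u>t\}\cap B^*$ through $q_1,q_2$ having $x$ in their closure) may fail, and then $t$ is no longer determined by the triple. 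Moore's theorem is robust precisely because it uses only the three arcs of $E$ and needs no control on the rest of $A_t$. A secondary issue: the passage from ``no branch points in any $A_t\cap\br B$'' to ``each component of $A_t$ is a point, a Jordan curve, or homeomorphic to an interval'' is handled in the paper by a fairly delicate compact-exhaustion argument; your citation of Theorem~\ref{Classification:theorem} (which concerns smooth submanifolds, not Peano spaces) and the phrase ``non-compact analogue'' do not cover this.
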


Recall that $u$ is assumed to be continuous. If a level set $A_t=u^{-1}(t)$ is the empty set then it has no components so the conclusions of the theorem hold trivially; this is also true for other statements later in the paper, so we will not mention again the possibility that the level sets can be empty.

\begin{proof}
\textbf{Main Claim:}
Consider an open set $U$ with $U\subset \subset \Omega$.  We restrict $u$ to a neighborhood of $\br U$ that is compactly contained in $\Omega$ and apply Lemma \ref{FiniteLength:Lemma}. It follows that for a.e.\ $t\in \R$ we have $\mathcal H^1(A_t\cap \br U)<\infty$. We shall show as our Main Claim that if we further exclude countably many values $t$, then each component $E_0$ of $A_t\cap \br U$ is either a point, or a Jordan arc, or a Jordan curve. 

\vspace{1em}

\noindent
\textbf{Compact exhaustion argument:} Assuming that the Main Claim holds for each such $U$, we consider an exhaustion of $\Omega$ by a nested sequence of open sets $\{U_n\}_{n\in \N}$, each compactly contained in $\Omega$, such that for a.e.\ $t\in \R$ the following holds for the level set $A_t$: $\mathcal H^1(A_t\cap \br{U_n})<\infty$ and each component $E_n$ of $A_t\cap \br {U_n}$ is either a point, or a Jordan arc, or a Jordan curve, for all $n\in \N$. We let $A_t$ be such a level set and fix $x_0\in A_t$. We will show that the component  $E$ of $A_t$ containing $x_0$ is either a point, or a Jordan curve, or it is homeomorphic to an interval. 

We have $x_0\in U_n$ for all sufficiently large $n$. To simplify our notation, we assume that this holds for all $n\in \N$. Let $E, E_n$ be the component of $A_t, A_t\cap \br{U_n}$, respectively, that contains $x_0$. We have $ E_n \subset  E_{n+1} \subset E$ for each $n\in \N$, which follows from the definition of a connected component (as the largest connected set containing a given point). By the continuity of $u$, $A_t$ is rel.\ closed in $\Omega$, so $E_n$ is compact for all $n\in \N$. If $E\subset U_n$ for some $n\in \N$, then $E_n=E$ and therefore $E$ itself is either a point, or a Jordan curve, or a Jordan arc by the Main Claim. This completes the proof in this case.

Suppose now that $E$ is not contained in $U_n$ for any $n\in \N$. Then $E_n$ necessarily intersects $\partial U_n$ for each $n\in \N$ as we see below using the next lemma, which is a direct consequence of \cite[IV.5, Corollary 1, p.~83]{Newman:Topology}.

\begin{lemma}\label{Separation:Lemma}
Suppose that $F$ is a connected component of a compact set $A$ in the plane. Then for each open set $U\supset F$ there exists an open set $V$ with $F\subset V \subset \subset U$ and $\partial V \cap A=\emptyset$.   
\end{lemma}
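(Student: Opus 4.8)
The plan is to isolate the one genuinely topological ingredient and then finish with an elementary metric thickening. In the compact set $A$, the connected component $F$ agrees with its quasi-component, i.e.\ with the intersection of all sets that are relatively open and closed in $A$ and contain $F$; this is precisely what \cite[IV.5, Corollary 1, p.~83]{Newman:Topology} provides. Since $F\subset U$ and $A\setminus U=A\cap(\R^2\setminus U)$ is compact, the sets $C\cap(A\setminus U)$, with $C$ relatively clopen in $A$ and $F\subset C$, form a family of compact sets with empty intersection; hence finitely many of them already have empty intersection, and intersecting the corresponding $C$'s yields a single set $C$, relatively open and closed in $A$, with $F\subset C\subset U$. (If the cited corollary is already formulated in this ``squeezed'' form, this step is unnecessary.)

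Granting such a $C$, the rest is routine. Being closed in the compact set $A$, the set $C$ is compact, and so is $A\setminus C$; moreover $C$ is disjoint from $A\setminus C$ and from the closed set $\R^2\setminus U$. I would therefore fix $\delta>0$ so small that $\delta<\dist(C,A\setminus C)$ whenever $A\setminus C\neq\emptyset$ and $\delta<\dist(C,\R^2\setminus U)$ whenever $U\neq\R^2$, and set
\[
V=\{x\in\R^2:\dist(x,C)<\delta\}.
\]
Then $V$ is open with $C\subset V$, so $F\subset V$. Its closure lies in $\{x:\dist(x,C)\leq\delta\}$, which is bounded (as $C$ is bounded) and closed, hence compact, and is contained in $U$ by the choice of $\delta$; thus $V\subset\subset U$. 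Finally $\partial V\subset\{x:\dist(x,C)=\delta\}$, while any point of $A$ lies either in $C$, where $\dist(\cdot,C)=0<\delta$, or in $A\setminus C$, where $\dist(\cdot,C)\geq\dist(C,A\setminus C)>\delta$; so $\partial V\cap A=\emptyset$.

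I expect no real obstacle here: the substance of the lemma is entirely the cited corollary (equivalently, the coincidence of components and quasi-components in compacta). The only points needing a little care are the compactness argument upgrading ``$F$ is an intersection of clopen sets'' to ``there is one clopen set between $F$ and $U$'', and the boundary computation, which is immediate once one splits $A$ as $C\cup(A\setminus C)$.
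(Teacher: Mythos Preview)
Your proof is correct. The paper does not give its own argument for this lemma but simply states that it is ``a direct consequence of \cite[IV.5, Corollary 1, p.~83]{Newman:Topology}''; your proposal spells out exactly how that consequence goes, via the coincidence of components and quasi-components in compacta followed by a routine $\delta$-thickening, so there is nothing to compare.
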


In our case, if $E_n \subset U_n$, then by the preceding lemma there exists an open set $V\supset E_n$ with $ V\subset\subset  U_n$ and $\partial V\cap (A_t\cap \br{U_n})=\emptyset$. This implies that $\partial V\cap A_t=\emptyset$. Then $\partial V\cap E=\emptyset$ and it follows that $V\cap E$ is both open and closed in $E$, so $V\cap E=E$ by the connectedness of $E$. Then $E=V\cap E\subset U_n$, which contradicts our assumption that $E\not\subset U_n$ for all $n\in \N$. Therefore, $E_n\cap \partial U_n \neq \emptyset$ for all $n\in \N$.

This implies that $E_n$ cannot be a single point since it also contains $x_0\in U_n$, so $E_n$ is either a Jordan arc, or a Jordan curve for each $n\in \N$, by the Main Claim. Note that $E_{n+1} \supsetneq E_n$ for all $n\in \N$, since $E_{n+1}\cap \partial U_{n+1}\neq \emptyset$ and $\partial U_{n+1}\cap \partial U_n=\emptyset$, as $U_n\subset \subset U_{n+1}$. If $E_n$ is a Jordan curve, then $E_{n+1}$ can be neither a Jordan curve, nor a Jordan arc, as it is strictly larger than $E_n$ (this uses the fundamental fact that $S^1$ is not homeomorphic to $[0,1]$). Therefore, $E_n$ is a Jordan arc for all $n\in \N$, and there exist homeomorphisms $\phi_n\colon [0,1]\to E_n$, $n\in \N$. Since $E_n\subsetneq  E_{n+1}$, these homeomorphisms can be pasted appropriately to obtain a continuous injective map $\phi \colon I\to \bigcup_{n\in \N} E_n \eqqcolon \mathcal E$, where $I\subset \R$ is either $\R$ or $[0,\infty)$ (after a change of variables). Assume that $I=[0,\infty)$; the other case is treated in the same way. The map $\phi$ has the property that $\phi^{-1}(E_n)$ is a compact subinterval of $I$ and $\phi^{-1}(E_n)\subsetneq \phi^{-1}(E_{n+1})$ for $n\in \N$. Moreover, $\phi(I)=\mathcal E$ exits all compact subsets of $\Omega$.

It now remains to show that $\phi$ is a homeomorphism and that $E= \bigcup_{n\in \N} E_n$, concluding therefore that $E$ is homeomorphic to an interval as desired. This is subtle and requires to use the assumption that $\mathcal H^1(A_t\cap \br {U_n}) <\infty$ for all $n\in \N$. 

We first claim that $\phi(s)$ does not accumulate at any point of $\Omega$ as $s\to \infty$. Assume for the sake of contradiction that $\phi(s)$ converges to a point $y_0\in U_{n_0}\subset \Omega$ along a subsequence of $s\to \infty$, for some $n_0\in \N$. Since $\phi(s)$ exits all sets $U_n$, we may find disjoint intervals $[a_n,b_n]$, $n\in \N$, such that $\phi(a_n) \to y_0$ as $n\to\infty$, $\phi([a_n,b_n])\subset \br {U_{n_0}}$, and $\phi(b_n)\in \partial U_{n_0}$ for all $n\in \N$. It follows that $\liminf_{n\to\infty}\diam (\phi([a_n,b_n])) \geq \dist(y_0, \partial U_{n_0})>0$. Since the diameter is a lower bound for the Hausdorff $1$-measure in connected spaces (see \cite[Section 18, p.~18]{Semmes:Hausdorff}), we have
\begin{align*}
\mathcal H^1( A_t\cap \br{U_{n_0}}) \geq \sum_{n\in \N} \mathcal H^1(\phi([a_n,b_n])) \geq \sum_{n\in \N} \diam (\phi([a_n,b_n]))=\infty,
\end{align*}
a contradiction. 

This now implies that $\phi$ is a homeomorphism of $I$ onto $\mathcal E$. Indeed, if $\phi(s_n)=y_n\in \mathcal E$ is a sequence converging to $\phi(s_0)=y_0\in \mathcal E$, then $y_n$ is contained in a compact subset of $\Omega$ for all $n\in \N$ and therefore $s_n,s_0$ lie in a compact subinterval $I_0$ of $I$ for all $n\in \N$. By the injectivity oh $\phi$, $\phi|I_0$ is a homeomorphism, so $s_n\to s_0$, and $\phi^{-1}$ is continuous on $\mathcal E$, as desired. Another implication of the previous paragraph is that $\br{\mathcal E} \setminus \mathcal E$ is contained in $\partial \Omega$ and $\mathcal E=\br{\mathcal E}\cap \Omega=  \br{\mathcal E}\cap E$ is rel.\ closed in $\Omega$ and in $E$.

We will show that $\mathcal E$ is also rel.\ open in $E$ and by the connectedness of $E$ we will have $E=\mathcal E$ as desired. Let $x\in \mathcal E$, so $x\in U_{n_0}$ for some $n_0\in \N$ and consider an open neighborhood $V\subset \subset U_{n_0}$ of $x$. We wish to show that $V\cap E \subset \mathcal E$, after shrinking the neighborhood $V$ if necessary. This will complete the proof that $\mathcal E$ is rel.\ open in $E$.

 If $y_0 \in V\cap E\setminus \mathcal E$, then arguing as we did for the construction of $\mathcal E$ and using the Main Claim, one can construct a set $\mathcal F \subset E$ containing $y_0$ and a homeomorphism $\phi' \colon I' \to \mathcal F$, where $I'=\R$ or $I'=[0,\infty)$. Moreover, the set $\mathcal F$ is, by construction, necessarily disjoint from $\mathcal E$. To see this, assume that they intersect at a point $z_0=\phi(s_0)= \phi'(s_0')$. Then there exist non-trivial closed intervals $I_0\subset I$ and $I_0'\subset I'$ such that $\phi(I_0)$ is the subarc of $\mathcal E$ from $x_0$ to $z_0$ and $\phi'(I_0')$ is the subarc of $\mathcal F$ from $y_0$ to $z_0$. We choose a large $k\in \N$ so that $\phi(I_0)\cup \phi'(I_0')\subset U_k$. Then by the definition of $E_k$ we must have $E_k \supset \phi(I_0)\cup \phi'(I_0')$, since the latter is connected and contains $x_0$. Then we have $y_0\in \phi'(I_0')\subset E_k\subset \mathcal E$, which is a contradiction. Hence, we have indeed $\mathcal F \cap \mathcal E=\emptyset$. 
 
Moreover, as in the construction of $\mathcal E$, since $E$ exits all compact subsets of $\Omega$, the set $\mathcal F$ must also have this property; see the statement right before Lemma \ref{Separation:Lemma}. Therefore,
\begin{align*}
\mathcal H^1(\mathcal F\cap \br {U_{n_0}}) \geq \dist (y_0, \partial U_{n_0}) \geq \dist (\partial V, \partial U_{n_0}) \eqqcolon \delta>0.
\end{align*}
Another property of $\mathcal F$ is that it is rel.\ closed in $\Omega$, for the same reason as $\mathcal E$.

If $V\cap E \setminus (\mathcal E\cup \mathcal F)\neq \emptyset$, by repeating the above procedure we may find sets $\mathcal F_i\subset A_t$, $i=1,2,\dots,N$, with the same properties as $\mathcal F\eqqcolon \mathcal F_1$ so that they are disjoint with each other and with $\mathcal E$, and they intersect $V$. We have
\begin{align*}
\infty > \mathcal H^1(A_t\cap \br{U_{n_0}} ) \geq \sum_{i=1}^N \mathcal H^1(\mathcal F_i\cap \br{U_{n_0}} ) \geq N\delta.
\end{align*}
This implies that we can find only a finite number of such sets $\mathcal F_i$. Since the compact sets $\mathcal F_i\cap \br V$ have positive distance from $\mathcal E\cap \br V$, we may find a smaller neighborhood $W\subset V$ of $x\in \mathcal E$ such that $W\cap E \subset \mathcal E$. This completes the proof that $\mathcal E$ is rel.\ open in $E$, as desired.

\vspace{1em}
\noindent
\textbf{Proof of Main Claim:} 
We will show that for all but countably many $t\in \R$ for which $\mathcal H^1(A_t\cap \br U)<\infty$ we have that each component $E_0$ of $A_t\cap \br U$ is either a point, or a Jordan arc, or a Jordan curve. 

Suppose that $\mathcal H^1(A_t\cap \br U)<\infty$ and let $E_0$ be a component of $A_t\cap \br U$, so $L\coloneqq \mathcal H^1(E_0)<\infty$. Since $E_0$ is a continuum, it follows that there exists a $2L$-Lipschitz continuous parametrization $\gamma\colon [0,1]\to E_0$ of $E_0$; see \cite[Theorem 2a]{EilenbergHarrold:Curves} or \cite[Proposition 5.1]{RajalaRomney:reciprocal}. Hence, $E_0$ is a locally connected, compact set; see \cite[Theorem 9.2, p.~60 and Theorem 27.12, p.~200]{Willard:topology}. 

Now, we need the following topological lemma that we prove later:

\begin{lemma}\label{Junction:Lemma}
Let $X$ be a Peano space, i.e., a connected, locally connected, compact metric space. If $X$ contains more than one points and is not homeomorphic to $[0,1]$ or $S^1$, then it must have a \textit{junction point} $x$, i.e., there exist three Jordan arcs $A_1,A_2,A_3\subset X$ that meet at $x$ but otherwise they are disjoint.
\end{lemma}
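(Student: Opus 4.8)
The plan is to prove the contrapositive: supposing $X$ is a nondegenerate Peano space with no junction point (equivalently, containing no \emph{triod} --- a union of three arcs emanating from a common point and otherwise disjoint), I will show that $X$ is homeomorphic to $[0,1]$ or to $S^1$. The one external input I will use repeatedly is the classical arcwise-connectedness strengthening of the Hahn--Mazurkiewicz theorem: any two distinct points of a nondegenerate Peano space are the endpoints of a Jordan arc contained in it. I then split into two cases according to whether $X$ contains a Jordan curve.

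\textbf{Case 1: $X$ contains a Jordan curve $C$.} I claim that then $X=C$, so that $X\cong S^1$. If not, choose $x\in X\setminus C$, join $x$ to a point of $C$ by a Jordan arc $\alpha\colon[0,1]\to X$ with $\alpha(0)=x$ and $\alpha(1)\in C$, put $t_0=\min\{t:\alpha(t)\in C\}>0$, $y=\alpha(t_0)$, and $\beta=\alpha([0,t_0])$, so that $\beta$ is a nondegenerate Jordan arc with $\beta\cap C=\{y\}$. Write $C=C_1\cup C_2$ where $C_1,C_2$ are nondegenerate subarcs with $C_1\cap C_2=\{y\}$. Since $\beta\cap C_i\subseteq\beta\cap C=\{y\}$ and $C_1\cap C_2=\{y\}$, the three arcs $\beta,C_1,C_2$ meet exactly at $y$ and are otherwise disjoint, so $y$ is a junction point --- a contradiction.

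\textbf{Case 2: $X$ contains no Jordan curve.} Then $X$ is a dendrite, and I will invoke the standard structure theory of dendrites: each pair of points $p,q$ spans a \emph{unique} arc $[p,q]$; each triple $p,q,r$ has a median $m=m(p,q,r)$ lying on all three arcs $[p,q],[q,r],[p,r]$; the order of a point does not increase when one passes to a subcontinuum; and a nondegenerate dendrite has at least two points of order one (endpoints, i.e.\ non-cut points). Fix endpoints $a\ne b$ and set $A=[a,b]$. If $A=X$ then $X\cong[0,1]$ and we are done. Otherwise take $x\in X\setminus A$ and let $m=m(a,b,x)\in A$. If $m$ lies in the interior of $A$, then $A=[a,m]\cup[m,b]$, and using uniqueness of arcs together with the linear order on $[a,x]$ and on $[b,x]$ one checks that $[m,x]\cap A=\{m\}$; hence $[a,m],[m,b],[m,x]$ is a triod centered at $m$ --- contradiction. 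So $m\in\{a,b\}$, say $m=a$; but then $a$ lies in the interior of the arc $[b,x]=[b,a]\cup[a,x]$, so $a$ has order at least two in $[b,x]\subseteq X$, hence in $X$, contradicting that $a$ is an endpoint of $X$. Thus $A=X$ and $X\cong[0,1]$.

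The only real obstacle is Case 2: to make it airtight I must either cite or reprove the basic structure theory of dendrites --- in particular that a dendrite is uniquely arcwise connected (equivalently, that two distinct arcs with the same endpoints contain a Jordan curve), that a nondegenerate dendrite has at least two endpoints, and that point order is monotone under passing to subcontinua --- and then carefully verify the disjointness claims in the triod constructions, notably the identity $[m,x]\cap A=\{m\}$. Case 1 is elementary once arcwise-connectedness is in hand.
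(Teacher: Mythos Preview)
Your proof is correct and takes a genuinely different route from the paper. The paper argues directly: if $X$ is a nondegenerate Peano space not homeomorphic to $[0,1]$, then by a classical theorem of Whyburn it has at least three non-cut points $x_1,x_2,x_3$; arcwise connectedness yields arcs $J_{ij}\subset X\setminus\{x_k\}$ between each pair, and either two such arcs meet at an interior point (giving a triod immediately) or the three arcs concatenate to a Jordan curve $J$, in which case---assuming further that $X\not\cong S^1$---one attaches an arc from a point of $X\setminus J$ to $J$ exactly as in your Case~1. Your approach instead splits on whether $X$ contains a simple closed curve at all: your Case~1 coincides with the paper's final step, but your Case~2 replaces the non-cut-point argument by the structure theory of dendrites (unique arcs, medians, endpoints, order monotonicity). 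Conceptually your dichotomy is cleaner, and the median argument is elegant; the cost is that you must import or reprove several nontrivial dendrite facts, which you yourself flag as the main obstacle. The paper's argument trades this for a single classical citation (existence of three non-cut points) and otherwise uses only arcwise connectedness, making it lighter on prerequisites though perhaps less structurally transparent.
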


By our previous discussion, if $\mathcal H^1(A_t \cap \br U)<\infty$ then each component $E_0$ of $A_t\cap \br U$ is a Peano space. If there is a component $E_0$ of $A_t\cap \br U$ that is not a point or a Jordan arc or a Jordan curve, then by Lemma \ref{Junction:Lemma}, $E_0$ must have a junction point. Hence, $A_t\cap \br U$ has a junction point.

A theorem of Moore \cite[Theorem 1]{Moore:triods} (see also \cite[Proposition 2.18]{Pommerenke:conformal}) states that there can be no uncountable collection of disjoint compact sets in the plane such that each set has a junction point. Note that $A_s\cap A_t=\emptyset$ for $s\neq t$. Hence, for at most countably many $t\in \R$ the set $A_t\cap \br U$ can have a junction point. 

Summarizing, for at most countably many $t\in \R$ for which $\mathcal H^1(A_t\cap \br U)<\infty$ the set $A_t\cap \br U$ has a component $E_0$ that has a junction point and is not a Jordan arc or a Jordan curve.   
\end{proof}

\begin{proof}[Proof of Lemma \ref{Junction:Lemma}]
Suppose that $X$ contains more than one points and is not homeomorphic to $[0,1]$. Then there exist at least three \textit{non-cut points} $x_1,x_2,x_3\in X$, i.e., points whose removal does not disconnect $X$; see \cite[Theorems (6.1) and (6.2), p.~54]{Whyburn:topology}. Since $X$ is a Peano space, it is locally path-connected \cite[Theorem 31.4, p.~221]{Willard:topology}. It follows that each of the spaces $X\setminus \{x_{i}\}$, $i=1,2,3$, is connected and locally path-connected. Moreover, a connected, locally path-connected space is path-connected \cite[Theorem 4.3, p.~162]{Munkres:topology}. Hence, we may find Jordan arcs $J_{ij} \subset X \setminus \{x_k\}$ with endpoints $x_i$ and $x_j$, where $i,j,k\in \{1,2,3\}$, are distinct and $i<j$; see also \cite[Corollary 31.6, p.~222]{Willard:topology}.

If two of the arcs $J_{ij}$ intersect at an interior point, i.e., a point different from $x_1,x_2,x_3$, then it is straightforward to consider three subarcs $A_1,A_2,A_3$ of these arcs that intersect at one point, but otherwise are disjoint, as required in the statement of the lemma.

If the arcs $J_{ij}$ intersect only at the endpoints, then we can concatenate the three arcs to obtain a Jordan curve $J$, i.e., homeomorphic to $S^1$. Suppose now that the space $X$ is, in addition, not homeomorphic to $S^1$. Then, there must exist a point $x\in X\setminus J$. We claim that there exists a Jordan arc $J_x$ that connects $x$ to $J$ and intersects $J$ at one point. Assuming that claim, one can now define $A_1\coloneqq J_x$ and $A_2,A_3$ to be subarcs of $J$ that meet at $x$ but otherwise are disjoint.

To prove the claim note that since $X$ is a {Peano space}, any two points in $X$ can be joined with a Jordan arc \cite[Theorem 31.2, p.~219]{Willard:topology}. We connect $x$ to any point $y\in J$ with a Jordan arc $J_x$, parametrized by $\gamma\colon[0,1]\to J_x$, so that $\gamma(0)=x$ and $\gamma(1)=y$. If there exists $s\in (0,1)$ such that $\gamma(s)\in J$ then we consider the smallest such $s$ and we restrict $\gamma$ to $[0,s]$. This gives the desired Jordan arc. 
\end{proof}

\begin{remark}\label{remark:metric spaces}
In Theorem \ref{Jordan:Theorem} (Theorem \ref{Intro:MonotoneSobolev:Theorem}) the assumption that $u\in W^{1,p}_{\loc}(\Omega)$ is only used to deduce that almost every level set of $u$ has locally finite Hausdorff $1$-measure; the latter is proved  in Lemma \ref{FiniteLength:Lemma} using the co-area formula. All the other arguments rely on planar topology. Thus our proof can be generalized to functions defined on metric spaces homeomorphic to $\R^2$ and the first part of Theorem \ref{Intro:metric:theorem} follows. 
\end{remark}

\subsection{Level sets of monotone functions}\label{Section:levelmonotone}
In this subsection we study the level sets of monotone functions and prove Theorem \ref{Intro:MonotoneSobolev:Theorem}. Recall that $u\colon \Omega\to \R$ is assumed to be continuous.

\begin{lemma}\label{Monotonicity:Corollary}
If $u$ is monotone in $\Omega$ then for each $t\in \R$ each component $E$ of the level set $A_t$ satisfies either of the following: 
\begin{enumerate}[\upshape(i)]
\item $E$ exits all compact subsets of $\Omega$, or
\item all bounded components of $\R^2 \setminus E$ intersect $\partial \Omega$ and there exists at least one such component.
\end{enumerate}
In particular, 
$$\diam (E) \geq \sup_{x\in E} \dist(x,\partial \Omega)>0.$$
\end{lemma}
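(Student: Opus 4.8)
The plan is to treat the two alternatives of the dichotomy separately. If $E$ is not contained in any compact subset of $\Omega$ then (i) holds; since $E$ is a component of the relatively closed set $A_t$, it is relatively closed in $\Omega$, so otherwise $\br E$ is a compact subset of $\Omega$ and $E=\br E\subset\Omega$ is compact. Assume this; I will prove (ii). The easy half of (ii) is that no bounded component $V$ of $\R^2\setminus E$ is contained in $\Omega$. Indeed, such a $V$ is connected and disjoint from $\partial\Omega$; since $\emptyset\ne\partial V\subset E\subset\Omega$, the alternative $V\subset\R^2\setminus\br\Omega$ is impossible, so $V\subset\Omega$ and hence $V\subset\subset\Omega$. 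Applying monotonicity to the open set $V$, together with $u\equiv t$ on $\partial V\subset E\subset A_t$, gives $\max_{\br V}u=\max_{\partial V}u=t=\min_{\partial V}u=\min_{\br V}u$, so $u\equiv t$ on $\br V$; then $\br V$ is a connected subset of $A_t$ meeting $E$, so $\br V\subset E$, contradicting $V\subset\R^2\setminus E$. Thus every bounded component of $\R^2\setminus E$ meets $\partial\Omega$, and it remains to show that $\R^2\setminus E$ actually has a bounded component, i.e.\ that $E$ separates the plane.

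\textbf{The separation step.} Assume for contradiction that $\R^2\setminus E$ is connected. The crux is to enclose $E$ by a single Jordan curve that avoids $A_t$ and whose closed interior stays in $\Omega$. First, since $E$ is a non-separating continuum, $S^2\setminus E$ (with $S^2=\R^2\cup\{\infty\}$) is a simply connected domain, so by the Riemann mapping theorem (equivalently, by the cellularity of non-separating plane continua) $E=\bigcap_n C_n$ for nested closed topological disks $C_n\subset\R^2$ with $E\subset\inter(C_n)$ and $C_n\downarrow E$; as $E$ is compact and $E\subset\Omega$, we may fix $n$ with $C_n\subset\Omega$ and set $W:=\inter(C_n)$, a simply connected open set with $E\subset W$ and $\br W=C_n\subset\Omega$. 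Next, $E$ is a component of the compact set $A_t\cap\br W$, so by Lemma~\ref{Separation:Lemma} --- chosen, as a standard refinement allows (e.g.\ as the interior of a finite union of dyadic squares), so that its boundary consists of finitely many disjoint Jordan curves --- there is an open set $V$, so bounded, with $E\subset V\subset\subset W$ and $\partial V\cap A_t=\emptyset$. Let $V_0$ be the component of $V$ containing $E$, let $\gamma$ be its outer boundary curve, and let $U:=\inter(\gamma)$; then $U$ is a Jordan domain with $E\subset V_0\subset U$, with $\partial U=\gamma\subset\partial V$ disjoint from $A_t$, and with $\br U\subset W\subset\Omega$ because $\gamma\subset W$ and $W$ is simply connected.

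\textbf{Conclusion of the separation step, and the displayed bound.} Now $E\subset U\subset\subset\Omega$ and $u\equiv t$ on $E$, so by monotonicity $\max_{\partial U}u=\max_{\br U}u\ge t$ and $\min_{\partial U}u=\min_{\br U}u\le t$, and since $u\ne t$ on $\partial U$ these are strict: $\max_{\partial U}u>t>\min_{\partial U}u$. But $\partial U=\gamma$ is connected and $u$ is continuous, so by the intermediate value theorem $u$ attains the value $t$ somewhere on $\partial U$ --- contradicting $\partial U\cap A_t=\emptyset$. Hence $E$ separates the plane, proving (ii). For the last assertion, fix $x_0\in E$ and suppose $\diam(E)<\dist(x_0,\partial\Omega)$. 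Then $E\subset\br B(x_0,\diam E)$, a compact subset of $\Omega$, so (i) fails and therefore (ii) holds; but every bounded component of $\R^2\setminus E$ is contained in $\br B(x_0,\diam E)\subset\Omega$ (the unbounded complementary component of $E$ contains $\R^2\setminus\br B(x_0,\diam E)$), contradicting the part of (ii) already proved. So $\diam(E)\ge\dist(x_0,\partial\Omega)$ for every $x_0\in E$, and this supremum is positive since $E$ is a nonempty subset of the open set $\Omega$.

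\textbf{Main obstacle.} The only delicate point is the construction of $U$: the non-separating hypothesis must be used essentially --- through the cellularity of $E$ --- to obtain a \emph{simply connected} neighborhood $W$ of $E$ with $\br W\subset\Omega$, since only then does passing to the outer boundary curve of a separating set keep the resulting closed Jordan domain inside $\Omega$. When $E$ does separate the plane (the case permitted by (ii)), this construction must fail, for a bounded complementary component of $E$ may reach $\partial\Omega$ --- as for $u=\log|z|$ on an annulus --- and then $E$ has no neighborhood $U$ with $\br U\subset\Omega$ at all.
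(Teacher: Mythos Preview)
Your proof is correct and follows essentially the same route as the paper's. The paper, instead of invoking Lemma~\ref{Separation:Lemma} and then passing to an outer boundary curve, quotes Zoretti's theorem (stated inside the proof) to obtain a Jordan region $V$ with $E\subset V$, $\partial V\subset U$, and $\partial V\cap A_t=\emptyset$ in one stroke; your dyadic-square refinement plus the outer-boundary argument is a harmless reconstruction of the same fact. The contradiction at the end is also phrased slightly differently --- the paper observes that $u>t$ or $u<t$ on the connected curve $\partial V$ and then applies monotonicity to force $u\ne t$ on $E$, whereas you apply monotonicity first and then the intermediate value theorem --- but these are two sides of the same coin.
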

\begin{proof}

In the proof we will use the following lemma, known as Zoretti's theorem, which is in the same spirit as Lemma \ref{Separation:Lemma}.
\begin{lemma}[{\cite[Corollary 3.11, p.~35]{Whyburn:TopologicalAnalysis}}]\label{Zoretti:Lemma}
Let $F$ be a connected component of a compact set $A$ in the plane. Then for each open set $U \supset F$ there exists a Jordan region $V$ with $F\subset V$, $\partial V\subset U$, and $\partial V\cap A=\emptyset$.  
\end{lemma}

Suppose that a component $E$ of $A_t$ is compactly contained in $\Omega$. First we will show that $\R^2\setminus E$ has a bounded component. Suppose that this is not the case. Then $\widehat\C\setminus E$ is simply connected and contains $\partial \Omega$, so by using the Riemann mapping theorem we may find arbitrarily close to $E$ Jordan curves surrounding $E$ and separating $E$ from $\partial \Omega$. Hence, we may find a Jordan region $U \subset \subset \Omega$ containing $E$. Consider the compact set $A_t\cap \br U$. Then $E$ is also a component of $A_t\cap \br U$. By Lemma \ref{Separation:Lemma}, we can find a Jordan region $V$ such that $E\subset V$, $\partial V\subset U$, and $\partial V\cap A_t=\emptyset$. It follows that $V\subset U\subset \subset \Omega$. On $\partial  V$ we must have $u>t$ or $u<t$. Without loss of generality, suppose that we have $u>t$ on $\partial V$. Then by the monotonicity of $u$ we have $u>t$ on $ V\supset E$, a contradiction. Therefore, $\R^2\setminus E$ has a bounded component.

Next, we will show that all bounded components of $\R^2\setminus E$ must intersect $\partial \Omega$. Suppose that a bounded component $U$ of $\R^2\setminus E$ does not intersect $\partial \Omega$. Then $U\subset \Omega$ and $\partial U \subset E\subset \subset \Omega$ so $U\subset \subset \Omega$. Since $u=t$ on $\partial U$, by the monotonicity of $u$ we have that $u=t$ on $\br U$. Since $E$ is a connected component of $A_t$, we must have $U\subset E$, a contradiction.

Now we prove the claim involving the diameters. If $\Omega=\R^2$, then $E$ necessarily satisfies the first alternative, so it escapes to $\infty $ and $\diam(E)=\infty$. If $\Omega \subsetneq \R^2$, then for each $x\in E$ we may consider the largest ball $B(x,r)$ contained in $\Omega$, where $r=\dist(x,\partial \Omega)$. Then $E$ cannot be contained in a ball $B(x,r-\varepsilon)$ for any $\varepsilon>0$ since this would violate both alternatives. Therefore, $\diam(E) \geq r-\varepsilon$ for all $\varepsilon>0$, which implies that $\diam(E) \geq \dist(x,\partial \Omega)$, as desired.
\end{proof}

We record an immediate corollary:
\begin{corollary}
If $\Omega$ is simply connected, then $u$ is monotone in $\Omega$ if and only if each component of each level set of $u$ exits all compact subsets of $\Omega$.
\end{corollary}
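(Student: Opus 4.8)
The plan is to prove the two implications separately; only the direction ``monotone $\Rightarrow$ every level‑set component escapes all compacta'' will use simple connectivity, the converse holding for an arbitrary open $\Omega\subset\R^2$.

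For the converse, suppose every component of every level set of $u$ exits all compact subsets of $\Omega$, and fix a nonempty open $U\subset\subset\Omega$. I would set $M=\max_{\br U}u$ and choose $x_0\in\br U$ with $u(x_0)=M$. If $x_0\in\partial U$ there is nothing to prove for the maximum; otherwise $x_0\in U$, and I let $E$ be the component of $A_M=u^{-1}(M)$ containing $x_0$. Since $\br U$ is a compact subset of $\Omega$, the set $E$ is not contained in $\br U$; being connected and meeting both of the disjoint open sets $U$ and $\R^2\setminus\br U$, it must meet $\partial U=\br U\setminus U$. Any point of $E\cap\partial U$ attains the value $M$, so $\max_{\partial U}u=M=\max_{\br U}u$. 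Applying the same argument to the minimum (the level sets of $u$ and of $-u$ are the same sets) gives $\min_{\partial U}u=\min_{\br U}u$, hence $u$ is monotone.

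For the forward implication, assume $u$ is monotone and let $E$ be a component of a level set $A_t$. By Lemma \ref{Monotonicity:Corollary}, either $E$ exits all compact subsets of $\Omega$ --- which is the desired conclusion --- or all bounded components of $\R^2\setminus E$ intersect $\partial\Omega$ and at least one such component exists. I would rule out the second alternative when $\Omega$ is simply connected. If $E$ did not exit all compact subsets of $\Omega$, then, being relatively closed in $\Omega$ (as a component of the relatively closed set $A_t$) and contained in a compact subset of $\Omega$, the set $E$ would be compact, so by Lemma \ref{Monotonicity:Corollary} the second alternative would hold. But when $\Omega$ is simply connected, $\widehat\C\setminus\Omega$ is a connected subset of $\widehat\C$ that contains $\infty$ and is disjoint from $E\subset\Omega$, hence it lies in the component of $\widehat\C\setminus E$ containing $\infty$, i.e.\ in the unbounded component of $\R^2\setminus E$. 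Consequently no bounded component of $\R^2\setminus E$ meets $\widehat\C\setminus\Omega$, let alone $\partial\Omega$, contradicting the second alternative. Therefore $E$ exits all compact subsets of $\Omega$. The case $\Omega=\R^2$ is covered automatically, since then $\partial\Omega=\emptyset$ and the second alternative, which asserts the existence of a bounded component of $\R^2\setminus E$ meeting $\partial\Omega$, cannot occur.

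The only mildly delicate point is the plane‑topology input in the forward direction: identifying the bounded components of $\R^2\setminus E$ with the components of $\widehat\C\setminus E$ other than the one containing $\infty$, and using the characterization of simple connectivity of a planar domain by connectedness of its complement in $\widehat\C$. Both are standard facts, already used in the proof of Lemma \ref{Monotonicity:Corollary}, so no genuinely new obstacle arises; the content of the corollary is essentially a repackaging of that lemma together with the easy converse above.
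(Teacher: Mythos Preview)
Your proof is correct and follows essentially the same approach as the paper: the converse is identical in spirit (a level-set component through any point of $U$ must cross $\partial U$), and the forward direction uses Lemma~\ref{Monotonicity:Corollary} together with the topological fact that simple connectivity forces the second alternative to fail. The only difference is cosmetic: the paper invokes connectedness of $\partial\Omega$, while you use the equivalent statement that $\widehat\C\setminus\Omega$ is connected, which makes the exclusion of the second alternative slightly more transparent.
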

\begin{proof}
Suppose that $u$ is monotone. If $\Omega$ is simply connected, then $\partial \Omega$ is connected, so it cannot be separated by a level set $A_t$ of $u$. Thus, in Lemma \ref{Monotonicity:Corollary} only the first alternative can occur, as desired. 

Conversely, suppose that only the first alternative occurs and let $U\subset\subset \Omega$. Then for any $x_0\in U$ the component $E$ of the level set $A_t$, $t=u(x_0)$, that contains $x_0$ has to intersect $\partial U$. Thus, there exists $y_0\in \partial U$ with $u(x_0)=u(y_0)$. This implies the monotonicity of $u$.
\end{proof}

Next, we add the assumption that $u$ lies in a Sobolev space:

\begin{lemma}\label{MonotoneSobolev:Lemma}
Suppose that $u$ is monotone in $\Omega$ and lies in $W^{1,p}_{\loc}(\Omega)$ for some $1\leq p \leq \infty$. Then for a.e.\ $t\in \R$ the components of the level set $A_t$ are rel.\ open in $A_t$. In other words, if $E$ is a component of $A_t$ and $x\in E$ then there exists an open neighborhood $U$ of $x$ such that $E\cap U= A_t\cap U$. 
\end{lemma}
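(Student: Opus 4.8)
The plan is to argue by contradiction, localizing at a single point and combining the diameter estimate of Lemma~\ref{Monotonicity:Corollary} with the local finiteness of the Hausdorff $1$-measure of level sets. By Lemma~\ref{FiniteLength:Lemma}, applied to $u$ restricted to the members of an exhaustion of $\Omega$ by relatively compact open sets, for a.e.\ $t\in\R$ the level set $A_t$ satisfies $\mathcal H^1(A_t\cap K)<\infty$ for every compact $K\subset\Omega$; we fix such a $t$. All the other ingredients (continuity of $u$, Lemma~\ref{Monotonicity:Corollary}, Lemma~\ref{Separation:Lemma}) hold for every $t$. Let $E$ be a component of $A_t$ and suppose, toward a contradiction, that $E$ is not relatively open in $A_t$; then there exist a point $x\in E$ and a sequence $y_k\in A_t\setminus E$ with $y_k\to x$. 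For each $k$ let $F_k$ be the component of $A_t$ containing $y_k$, so that $F_k\neq E$, and hence $F_k\cap E=\emptyset$, for all $k$.

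First I would dispose of the possibility that only finitely many of the $F_k$ are distinct. In that case some component $G$ of $A_t$ contains $y_k$ for infinitely many $k$, so that $x\in\overline{G}$. Since $u$ is continuous, $A_t$ is relatively closed in $\Omega$, and therefore so is its component $G$; as $x\in\Omega$, this forces $x\in G$, contradicting $x\in E$ together with $E\cap G=\emptyset$ (note $G\neq E$ because $y_k\notin E$). Hence, after passing to a subsequence, we may assume the sets $F_k$ are pairwise disjoint.

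The main point is then to show that each $F_k$ carries a definite amount of $\mathcal H^1$-measure inside a fixed ball about $x$. Choose $r>0$ with $\dist(x,\partial\Omega)>3r$ (possible since $x\in\Omega$; use the convention $\dist(x,\emptyset)=+\infty$ if $\Omega=\R^2$), so that $\overline{B(x,r)}\subset\Omega$. For all large $k$ we have $|y_k-x|<r/2$, and by Lemma~\ref{Monotonicity:Corollary},
\[
\diam(F_k)\;\geq\;\dist(y_k,\partial\Omega)\;\geq\;\dist(x,\partial\Omega)-|y_k-x|\;>\;2r ,
\]
so $F_k$ contains a point at distance greater than $r$ from $x$, i.e.\ outside $\overline{B(x,r)}$. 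Recalling that $F_k$ is relatively closed in $\Omega$, the set $F_k\cap\overline{B(x,r)}$ is compact; let $C_k$ be its connected component containing $y_k$. Then $C_k$ must meet $\partial B(x,r)$: otherwise $C_k\subset B(x,r)$, and Lemma~\ref{Separation:Lemma} (with $A=F_k\cap\overline{B(x,r)}$, $C_k$ in the role of $F$, and $U=B(x,r)$) yields an open set $V$ with $C_k\subset V\subset\subset B(x,r)$ and $\partial V\cap(F_k\cap\overline{B(x,r)})=\emptyset$; since $\partial V\subset\overline{B(x,r)}$ this gives $\partial V\cap F_k=\emptyset$, so $V\cap F_k$ is nonempty and relatively clopen in the connected set $F_k$, forcing $F_k=V\cap F_k\subset B(x,r)$, which contradicts the displayed inequality. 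Thus $C_k$ is connected and contains both $y_k$ and a point of $\partial B(x,r)$, so $\diam(C_k)\geq r-|y_k-x|>r/2$; since the diameter is a lower bound for $\mathcal H^1$ on connected sets, $\mathcal H^1(F_k\cap\overline{B(x,r)})\geq\diam(C_k)>r/2$. As the $F_k$ are pairwise disjoint, summing over the infinitely many large $k$ gives $\mathcal H^1(A_t\cap\overline{B(x,r)})=\infty$, contrary to the choice of $t$. This contradiction shows that every component of $A_t$ is relatively open in $A_t$.

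I expect the crux to be this last step: extracting from each intruding component $F_k$ a subcontinuum sitting inside a fixed ball around $x$ whose diameter is bounded below uniformly in $k$. This is exactly where monotonicity is genuinely used --- through the estimate $\diam(F_k)\geq\dist(y_k,\partial\Omega)$ of Lemma~\ref{Monotonicity:Corollary} --- and it relies on a boundary-bumping/separation argument of the same type that appears in the proof of Theorem~\ref{Jordan:Theorem} (via Lemma~\ref{Separation:Lemma}). Once that subcontinuum is in hand, the local finiteness of $\mathcal H^1$ on $A_t$ closes the argument immediately; the reduction to pairwise disjoint $F_k$ in the second paragraph is routine but is needed to make the summation legitimate.
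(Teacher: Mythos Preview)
Your proof is correct and follows the same underlying strategy as the paper: combine the diameter lower bound from Lemma~\ref{Monotonicity:Corollary} with the local finiteness of $\mathcal H^1(A_t)$ to show that only finitely many components of $A_t$ can meet a fixed neighborhood of $x$. The one technical difference is in how the diameter bound is localized. The paper restricts $u$ to a precompact open set $U_{n_0}\supset\supset V\ni x$; since $u\big|_{U_{n_0}}$ is still monotone, Lemma~\ref{Monotonicity:Corollary} applied to this restriction directly gives a component $G$ of $A_t\cap U_{n_0}$ through $V$ with $\diam(G)\geq\dist(\overline V,\partial U_{n_0})$, and no separation argument is needed. You instead apply Lemma~\ref{Monotonicity:Corollary} in the full domain $\Omega$ to get $\diam(F_k)$ large, and then invoke Lemma~\ref{Separation:Lemma} to extract a subcontinuum $C_k\subset F_k\cap\overline{B(x,r)}$ reaching $\partial B(x,r)$. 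Both routes yield the same uniform $\mathcal H^1$ lower bound; the paper's restriction trick is slightly slicker in that it sidesteps the boundary-bumping step, while your version makes that step explicit and transparent.
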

\begin{proof}
We consider an exhaustion of $\Omega$ by a nested sequence of open sets $\{U_n\}_{n\in \N}$, each compactly contained in $\Omega$, such that for a.e.\ $t\in \R$ we have $\mathcal H^1(A_t\cap \br{U_n})<\infty$ for all $n\in \N$; the existence of such an exhaustion can be justified using Lemma \ref{FiniteLength:Lemma}. 

We fix $t\in \R$ such that $A_t\neq \emptyset$, and consider a component $E\subset A_t$ and $x\in E$. We claim that for each neighborhood $V\subset \subset \Omega$ of $x$ there are at most finitely many components of $A_t$ intersecting $V$. 

There exists $n_0\in \N$ such that $V\subset \br V\subset   U_{n_0}$. Suppose that $F$ is a component of $A_t$ intersecting $V$. We consider the restriction of $u$ to $U_{n_0}$, which is still a monotone function and we let $G\subset F$ be a component of the level set $A_t\cap U_{n_0}$ of $u\big|{U_{n_0}}$ such that $G\cap V\neq \emptyset$. For a point $y\in G\cap V$ we have 
$$\diam(G) \geq \dist(y,\partial U_{n_0})$$
by Lemma \ref{Monotonicity:Corollary}. Since $V\subset\subset U_{n_0}$, we have $\dist(y,\partial U_{n_0}) \geq \dist(\br V, \partial U_{n_0}) >0$. Moreover, by the connectedness of $G$ we have $\mathcal H^1(G) \geq \diam (G)$; see \cite[Section 18, p.~18]{Semmes:Hausdorff}. Combining these inequalities, we have
\begin{align*}
\mathcal H^1(F\cap \br{U_{n_0}} )\geq \mathcal H^1(G)\geq \diam(G)\geq \dist(y,\partial U_{n_0})\geq \dist(\br V, \partial U_{n_0}) >0.
\end{align*}
Since $\mathcal H^1(A_t\cap\br{U_{n_0}})<\infty$, there can be at most finitely many components $F$ of $A_t$ intersecting $V$.

Since the compact sets $\br V\cap F$ and $\br V\cap E$ have positive distance, it follows that if we choose a smaller neighborhood $U\subset V$ of $x$, then we have $E\cap U=A_t\cap U$ as desired. 
\end{proof}

We will also need the following general lemma:

\begin{lemma}\label{Extremal:Lemma}
Let $(X,d)$ be a separable metric space and $f\colon X \to \R$ be any function. Then the set of local extremal values of $f$ is at most countable. 
\end{lemma}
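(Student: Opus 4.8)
The plan is to associate to each local extremal value a distinct rational ball (or a pair of a rational ball and a sign bit), so that distinct extremal values get disjoint witnesses, and then invoke separability to conclude countability. First I would fix a countable base $\{B_k\}_{k\in\N}$ for the topology of $X$; since $X$ is separable metric, such a base exists (e.g.\ the balls $B(x_j, q)$ with $x_j$ ranging over a countable dense set and $q\in \Q_{>0}$). Recall that $c\in \R$ is a \emph{local maximum value} of $f$ if there is $x\in X$ and a neighborhood on which $f\le f(x)=c$; similarly for local minimum values. A local extremal value is one that is either a local maximum value or a local minimum value.

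Next I would handle local maximum values. Suppose $c$ is a local maximum value, witnessed at $x$: there is an open $U\ni x$ with $f\le c$ on $U$ and $f(x)=c$. Choose a basic open set $B_{k}$ with $x\in B_k\subset U$. Then $c=f(x)=\sup_{B_k} f$, because $f\le c$ throughout $B_k$ and the value $c$ is attained at $x\in B_k$. Hence the map sending $c$ to the least index $k$ such that $\sup_{B_k} f=c$ and this supremum is attained in $B_k$ is well defined on the set of local maximum values; moreover it is injective, since the index $k$ determines $c=\sup_{B_k}f$. Therefore the set of local maximum values is at most countable. The argument for local minimum values is identical, with $\inf$ in place of $\sup$. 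The set of local extremal values is the union of these two at-most-countable sets, hence at most countable.

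I do not expect a genuine obstacle here; the only point requiring a little care is the bookkeeping that makes the assignment $c\mapsto k$ well defined and injective — one must make sure to record whether a basic set witnesses a maximum value or a minimum value (the union of the two countable sets handles this), and to take a canonical choice of index (the least one) so that no choice function is needed. Everything else is a direct consequence of the existence of a countable base, i.e.\ of separability.
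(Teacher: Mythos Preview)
Your argument is correct and follows essentially the same idea as the paper's proof: both exploit separability to assign to each local extremal value a witness from a countable family (you use basic open sets and the map $c\mapsto$ least index $k$ with $\sup_{B_k}f=c$; the paper decomposes by scale and observes that distinct values in $E_n$ yield disjoint balls of radius $1/n$). The packaging differs slightly, but the underlying mechanism is the same.
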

\begin{proof}
Let $E$ be the set of local maximum values of $f$. Then, by definition, for each $y\in E$ there exists $x\in X$ with $f(x)=y$ and a ball $B(x,r)$ such that for all $z\in B(x,r)$ we have $f(z)\leq y$. We can write 
$E=\bigcup_{n\in \N} E_n$, where 
$$E_n=\{y\in \R: y=f(x) \,\,\,\textrm{for some}\,\,\, x\in X \,\,\, \textrm{and}\,\,\, f(z)\leq y \,\,\, \textrm{for all}\,\,\, z\in B(x,2/n)\}.$$
We will show that $E_n$ is at most countable for each $n\in \N$. Let $y_1,y_2\in E_n$ be distinct, so there exist points $x_1,x_2\in X$ with $f(x_i)=y_i$, $i=1,2$, as in the definition of $E_n$. Then the balls $B(x_1, 1/n)$, $B(x_2,1/n)$ are necessarily disjoint. Indeed, otherwise, we have $f(x_2)\leq y_1$ and $f(x_1)\leq y_2$, so $y_1=y_2$, a contradiction. Therefore, the set $E_n$ is in one-to-one correspondence with a collection of disjoint balls in $X$. The separability of $X$ implies that there can be at most countably many such balls. The same proof shows that the set of local minimum values of $f$ is at most countable.
\end{proof}

Now we have completed the preparation for the proof of Theorem \ref{Intro:MonotoneSobolev:Theorem}, restated below:

\begin{theorem}\label{Sard:Theorem}
Suppose that $u$ is monotone in $\Omega$ and lies in $W^{1,p}_{\loc}(\Omega)$ for some $1\leq p \leq \infty$. Then for a.e.\ $t\in \R$ the level set $A_t$ is an embedded $1$-dimensional topological submanifold of $\R^2$ that has locally finite Hausdorff $1$-measure.
\end{theorem}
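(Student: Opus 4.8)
The plan is to single out a value $t$ that avoids four exceptional sets and then analyze $A_t$ directly. Using Lemma~\ref{FiniteLength:Lemma} (local finiteness of $\mathcal H^1$), Theorem~\ref{Jordan:Theorem} (every component of $A_t$ is a point, a Jordan curve, or homeomorphic to an interval), Lemma~\ref{MonotoneSobolev:Lemma} (components of $A_t$ are relatively open in $A_t$), and Lemma~\ref{Extremal:Lemma} applied with $X=\Omega$ (there are only countably many local extremal values of $u$), I fix $t$ for which all four conclusions hold at once; this is still the case for a.e.\ $t$. For such a $t$, every $x_0\in A_t$ satisfies $u(x_0)=t$ and is neither a local maximum nor a local minimum point of $u$, since otherwise $t$ would be a local extremal value. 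Because each nondegenerate interval is homeomorphic to $[0,1]$, $[0,1)$, or $\R$, and because each component of $A_t$ is relatively closed in $\Omega$, it is enough to prove three things: (a) no component of $A_t$ is a single point; (b) no component is homeomorphic to $[0,1]$; (c) no component is homeomorphic to $[0,1)$. Indeed, granting (a)--(c), Theorem~\ref{Jordan:Theorem} shows that every component of $A_t$ is either a Jordan curve or a copy of $\R$ that is closed in $\Omega$; by relative openness of components, $A_t$ agrees with a single such component in a neighborhood of each of its points, and hence, by the planar Schoenflies theorem and the classification of $1$-manifolds (Theorem~\ref{Classification:theorem}), $A_t$ is an embedded $1$-dimensional topological submanifold of $\R^2$, with locally finite $\mathcal H^1$ by Lemma~\ref{FiniteLength:Lemma}.

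Assertion (a) is immediate from Lemma~\ref{Monotonicity:Corollary}, which gives $\diam E\ge\sup_{x\in E}\dist(x,\partial\Omega)>0$ for every component $E$. For (b), a component $E$ homeomorphic to $[0,1]$ is a Jordan arc, so $\R^2\setminus E$ is connected; thus alternative (ii) of Lemma~\ref{Monotonicity:Corollary} fails for $E$, which forces alternative (i): $E$ exits every compact subset of $\Omega$. This is impossible because $E$ itself is a compact subset of $\Omega$.

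The crux is (c). Suppose $E$ is a component of $A_t$ homeomorphic to $[0,1)$; since $E$ is closed in $\Omega$ and noncompact it is the image of a proper homeomorphism $h\colon[0,\infty)\to E$, and we let $x_0=h(0)$. Fix $\rho>0$ with $\overline{B(x_0,\rho)}\subset\Omega$, with $A_t\cap B(x_0,\rho)=E\cap B(x_0,\rho)$ (Lemma~\ref{MonotoneSobolev:Lemma}), and with $\mathcal H^1\!\left(A_t\cap\overline{B(x_0,\rho)}\right)<\infty$. I claim there is a radius $r\in(0,\rho)$ for which $A_t\cap\overline{B(x_0,r)}$ is a single Jordan arc $J$ joining $x_0$ to exactly one point $q\in\partial B(x_0,r)$, with $J\setminus\{q\}\subset B(x_0,r)$. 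To see this, one studies the open set $h^{-1}(B(x_0,r))\subset[0,\infty)$: it always has a component $[0,c(r))$, whose image is the arc emanating from $x_0$, and for a.e.\ $r$ the Eilenberg inequality together with the finite length of $A_t\cap\overline{B(x_0,\rho)}$ makes $E\cap\partial B(x_0,r)$ finite, so $h^{-1}(B(x_0,r))$ has only finitely many further components, each mapped to a crosscut of $B(x_0,r)$ with both endpoints on $\partial B(x_0,r)$. Using that $h$ is proper, one checks that for fixed $r$ these crosscuts can accumulate only onto $\partial B(x_0,r)$ and hence stay a positive distance from $x_0$; combining this with the bound $\diam\gamma\le\mathcal H^1(\gamma)$ for each crosscut $\gamma$ and with $\sum_\gamma\mathcal H^1(\gamma)\le\mathcal H^1\!\left(A_t\cap\overline{B(x_0,\rho)}\right)<\infty$, one passes (iterating the construction if necessary) to a radius $r$ so small that no crosscut meets $B(x_0,r)$, and the claim follows. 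Once the claim holds, $B(x_0,r)\setminus A_t=B(x_0,r)\setminus J$ is connected, because a disk with a Jordan arc joining an interior point to a boundary point removed is connected; on this connected set $u$ never takes the value $t$, so $u>t$ on all of it or $u<t$ on all of it. But $x_0$ is neither a local maximum nor a local minimum point of $u$, so $B(x_0,r)$ contains a point where $u>t$ and a point where $u<t$, and both lie in $B(x_0,r)\setminus A_t$; this contradiction rules out (c) and finishes the proof.

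The step I expect to be the real obstacle is the structural claim in (c): ruling out that $E$ ``spirals'' into its endpoint $x_0$, so that every disk about $x_0$ would meet $A_t$ in more than one arc. This is precisely where local finiteness of Hausdorff $1$-measure must be used in an essential way --- through the elementary but careful estimate $\diam\le\mathcal H^1$ applied to the many crosscuts at a given scale, together with a compactness argument exploiting properness of $h$ --- whereas everything else is soft planar topology combined with the monotonicity inputs of Lemmas~\ref{Monotonicity:Corollary} and~\ref{MonotoneSobolev:Lemma}.
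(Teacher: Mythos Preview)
Your overall architecture matches the paper's: reduce to a.e.\ $t$ via Theorem~\ref{Jordan:Theorem}, Lemma~\ref{MonotoneSobolev:Lemma}, and Lemma~\ref{Extremal:Lemma}, then rule out the three bad topological types for a component $E$. Parts (a) and (b) are fine and essentially identical to the paper's argument via Lemma~\ref{Monotonicity:Corollary}.

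The gap is exactly where you flagged it: your structural claim in (c), that some round ball $B(x_0,r)$ meets $E$ in a single Jordan arc, is \emph{false} for general finite-length simple arcs emanating from $x_0$, and your crosscut/iteration argument cannot establish it. A concrete obstruction: take $h(s)=f(s)(\cos s,\sin s)$ on $[0,1]$ with $f(s)=s\,g(\log s)$ for a suitable bounded periodic $g$ oscillating between $1$ and $3$ (e.g.\ piecewise linear with local maxima $3\cdot 2^{-n}$ at $s=2^{-n}$ and local minima $2^{-n-1/2}$ at $s=2^{-n-1/2}$). Then $h$ is injective, has finite length (since $f$ is Lipschitz and bounded), yet $\{s:f(s)<r\}=\{s:|h(s)|<r\}$ is disconnected for \emph{every} small $r$: each local max exceeds the next local min, and the corresponding $r$-intervals overlap. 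Your iteration does not terminate here---at every scale there are new crosscuts, each of length comparable to the current radius, so the total length $\sum_\gamma\mathcal H^1(\gamma)$ stays finite without ever forcing the crosscuts to disappear. The diameter bound $\diam\gamma\le\mathcal H^1(\gamma)$ gives no leverage, since the crosscuts at scale $r$ have diameter $O(r)$ anyway.

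The paper sidesteps this entirely: rather than seeking a round ball, it uses properness of $\phi$ to find $s_0$ with $\phi([s_0,\infty))\cap B(x_0,r)=\emptyset$, then straightens the \emph{compact} arc $\phi([0,s_0])$ to a segment by a global homeomorphism of $\R^2$ (Remark~\ref{Straighten:Remark}). After straightening, $E\cap B(x_0,r)$ lies inside this segment, so a small topological disk $U$ about the image of $x_0$ has $U\setminus E$ connected (a slit disk, possibly with some extra points of the slit restored). Then Lemma~\ref{MonotoneSobolev:Lemma} gives $U\setminus A_t=U\setminus E$, and the local-extremal-value contradiction follows. The key idea you are missing is to give up on metric balls and let the topology of the plane (no wild arcs) hand you the right neighborhood.
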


\begin{proof}
By Theorem \ref{Jordan:Theorem} and Lemma \ref{MonotoneSobolev:Lemma} we conclude that for a.e.\ $t\in \R$ the level set $A_t$ has locally finite Hausdorff $1$-measure, each component $E$ of the level set $A_t$ is rel.\ open in $A_t$, and it is either a point, or a Jordan curve, or it is homeomorphic to an interval. Using Lemma \ref{Extremal:Lemma}, we further exclude the countably many local extremal values $t\in \R$ of $u$ in $\Omega$. We fix a level set $A_t$ satisfying all the above. In particular, $A_t$ has the property that if $x\in A_t$ then each neighborhood $U$ of $x$ contains points with $y_1,y_2$ with $u(y_1)>t$ and $u(y_2)<t$. 

Our goal is to prove that every component $E$ of $A_t$ is either a Jordan curve, or it is homeomorphic to $\R$. Since each component of $A_t$ is rel.\ open in $A_t$, it will then follow that each $x\in A_t$ has a neighborhood $U$ such that $U\cap A_t$ is homeomorphic to an open interval. This shows that $A_t$ is a $1$-manifold. Since there are no wild arcs in the plane (see Remark \ref{Straighten:Remark} below), each Jordan arc of $A_t$ can be mapped to $[0,1]\times \{0\}$ with a global homeomorphism of $\R^2$. This shows that $A_t$ is an embedded submanifold of $\R^2$, as desired. 

We now focus on proving that every component $E$ of $A_t$ is either a Jordan curve, or it is homeomorphic to $\R$

We already know that $E$ is either a point, or a Jordan curve, or it is homeomorphic to an interval. If $E$ is a point or it is homeomorphic to the closed interval $[0,1]$, then $E$ is compactly contained in $\Omega$, so by Lemma \ref{Monotonicity:Corollary} the set $\R^2\setminus E$ must have at least one bounded component. This is a contradiction.

Finally, assume that $E$ is homeomorphic to $I=[0,\infty)$ under a map $\phi\colon I \to E$. Then $\phi(s)$ cannot accumulate to any point of $\Omega$ as $s\to\infty$. This is because $\phi$ is a homeomorphism and $E$ is rel.\ closed in $A_t$, and thus in $\Omega$. Let $x_0=\phi(0)$ and consider a ball $B(x_0,r)\subset \subset \Omega$. Then there exists $s_0$ such that $\phi(s_0)\in \partial B(x_0,r)$ and $\phi(s)\notin B(x_0,r)$ for all $s\geq s_0$. We may straighten $\phi([0,s_0])$ to the segment $[0,1]\times \{0\}$ with a homeomorphism of $\R^2$ (see Remark \ref{Straighten:Remark} below) and, using that, we can find a topological ball $U\subset B(x_0,r)$ containing $x_0$ such that $U\setminus E$ is connected. If $U$ is sufficiently small, then by Lemma \ref{MonotoneSobolev:Lemma} we have that $U\setminus A_t=U\setminus E$, so $U\setminus A_t$ is connected. This implies that $u>t$ or $u<t$ in $U\setminus E$. This is a contradiction, since $u(x_0)=t$ would then be a local extremal value of $u$ in this case.
\end{proof}

\begin{remark}\label{Straighten:Remark}
In the proof we used the fact that a Jordan arc in $\R^2$ can be mapped to $[0,1]\times \{0\}$ with a homeomorphism of $\R^2$. That is, there are no \textit{wild} arcs in the plane. To see this, one can first embed the Jordan arc in a Jordan curve, and then apply the Schoenflies theorem to straighten the Jordan curve. See also the discussion in \cite{NoWildArcs}. 
\end{remark}

\begin{remark}\label{remark:metric spaces monotone}
As in Remark \ref{remark:metric spaces} the proof of Theorem \ref{Sard:Theorem} (Theorem \ref{Intro:MonotoneSobolev:Theorem}) can be generalized to monotone functions defined on metric spaces homeomorphic to $\R^2$. Indeed, monotonicity is a topological property. This observation yields the second part of Theorem \ref{Intro:metric:theorem}.
\end{remark}

\subsection{Gluing monotone functions}\label{Section:glue}
In this section we include some results that allow us to paste monotone functions in order to obtain a new monotone function. These results will be useful towards the proof of the approximation Theorem \ref{Intro:Approximation:Theorem}. The proofs are elementary but the assumptions of the statements are finely chosen and cannot be relaxed. Recall that $u$ is continuous in $\Omega$.

\begin{lemma}[Gluing Lemma]\label{Gluing:Lemma}
Suppose that $u$ is monotone in $\Omega$ and consider $t_1,t_2\in \R$ with $t_1<t_2$. Let $\Upsilon=u^{-1}((t_1,t_2))$ and consider a continuous function $v$ on $\Upsilon$ such that 
\begin{enumerate}[\upshape(i)]
\item $v$ is monotone in $\Upsilon$,
\item  $v$ extends continuously to $\partial \Upsilon\cap \Omega$ and agrees there with $u$, and
\item  $t_1\leq v \leq t_2$ on $\Upsilon$.
\end{enumerate}
Then the function $\widetilde u$ that is equal to $u$ in $\Omega\setminus \Upsilon$ and to $v$ in $\Upsilon$ is monotone in $\Omega$. 
\end{lemma}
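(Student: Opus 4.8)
The plan is to check directly that $\widetilde u$ meets Definition \ref{Monotone:Definition}: for every open $W\subset\subset\Omega$, $\max_{\br W}\widetilde u=\max_{\partial W}\widetilde u$ and $\min_{\br W}\widetilde u=\min_{\partial W}\widetilde u$. First I would dispose of two preliminaries. Continuity of $\widetilde u$: it equals the continuous function $v$ on the open set $\Upsilon$ and equals $u$ on the open set $\Omega\setminus\br\Upsilon$; at a point $x\in\partial\Upsilon\cap\Omega$ we have $\widetilde u(x)=u(x)$, and since $\partial\Upsilon\cap\Omega\subset u^{-1}(\{t_1,t_2\})$ and by (ii) the continuous extension of $v$ agrees with $u$ there, the one-sided limits from inside and outside $\Upsilon$ coincide. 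Secondly, it is enough to prove the maximum identity for all such $W$, since the minimum identity is the maximum identity applied to $-\widetilde u$, which is precisely the function obtained by gluing the monotone functions $-u$ and $-v$ along $(-u)^{-1}((-t_2,-t_1))=\Upsilon$ — so the hypotheses of the lemma hold with $(u,v,t_1,t_2)$ replaced by $(-u,-v,-t_2,-t_1)$.

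\textbf{Main argument.} Fix $W\subset\subset\Omega$, put $M=\max_{\br W}\widetilde u$, and suppose toward a contradiction that $\max_{\partial W}\widetilde u<M$; then $M$ is attained at some $x_0\in W$. I would repeatedly use that $\{\widetilde u>t_2\}=\{u>t_2\}$ and $\{\widetilde u<t_1\}=\{u<t_1\}$ are contained in $\Omega\setminus\Upsilon$, where $\widetilde u=u$, while $t_1\le\widetilde u=v\le t_2$ on $\Upsilon$ by (iii). The proof then splits into three cases according to where $M$ sits relative to $[t_1,t_2]$ (these exhaust $\R$ since $t_1<t_2$).

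\textbf{The two easy cases.} If $t_1<M<t_2$, then $x_0\in\Upsilon$ and $v(x_0)=M$. Choosing $\lambda$ with $\max(\max_{\partial W}\widetilde u,\,t_1)<\lambda<M$, the open set $O=\{x\in W:\widetilde u(x)>\lambda\}$ is nonempty, its closure avoids $\partial W$, and — because $M<t_2$ forces $\{u\ge t_2\}\cap\br W=\emptyset$ — the whole of $\br O$ lies in $\Upsilon$, so $O\subset\subset\Upsilon$ with $\partial O\subset\{v=\lambda\}$; monotonicity of $v$ gives $\max_{\br O}v=\lambda$, contradicting $v>\lambda$ on $O$. If $M\le t_1$: when $\br W\cap\Upsilon=\emptyset$ we have $\widetilde u=u$ on $\br W$ and monotonicity of $u$ contradicts the assumption; otherwise $\widetilde u<t_1$ on $\partial W$ together with $v\ge t_1$ on $\Upsilon$ forces $\partial W\cap\Upsilon=\emptyset$, so $u=\widetilde u<t_1$ on $\partial W$, hence $u<t_1$ on $\br W$ by monotonicity of $u$ — contradicting $u>t_1$ at a point of $\br W\cap\Upsilon$.

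\textbf{The main obstacle.} I expect the remaining case $M\ge t_2$ to be the real difficulty, specifically the possibility that $M$ is attained at a point of $\Upsilon$ (which forces $M=t_2$), where neither $u$ nor $v$ by itself gives control; the strategy is to transfer the problem back to $u$. One checks $u\le M$ on $\br W$ (a point with $u>M\ge t_2$ would be in $\Omega\setminus\Upsilon$, where $\widetilde u=u>M$); then one must exhibit a point of $\br W$ at which $u=M$. If $x_0\notin\Upsilon$ this is $x_0$ itself. If $x_0\in\Upsilon$, so $v(x_0)=M=t_2$, let $S_0$ be the connected component of $x_0$ in the level set $\{x\in\Upsilon:v(x)=t_2\}$; using Lemma \ref{Separation:Lemma} to surround $S_0$ by a curve in $\Upsilon$ missing this level set, together with monotonicity of $v$, one shows $S_0$ is not compactly contained in $\Upsilon$ (heuristically, a top level set cannot bound a region). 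Hence $S_0$ either meets $\partial W$ — where $\widetilde u=v=t_2=M$, an immediate contradiction — or $\overline{S_0}$ is a compact subset of $\br W$ not contained in $\Upsilon$, so it meets $\partial\Upsilon\cap\Omega$ at some $z$; by (ii), $v$ extends to $z$ with value $u(z)\in\{t_1,t_2\}$, and taking limits along $S_0$ forces $u(z)=t_2=M$. In all cases $\max_{\br W}u=M$, so monotonicity of $u$ produces $y_1\in\partial W$ with $u(y_1)=M$; since $u<M$ throughout $\Upsilon$ we have $y_1\notin\Upsilon$, whence $\widetilde u(y_1)=u(y_1)=M$, contradicting $\max_{\partial W}\widetilde u<M$ and finishing the proof.
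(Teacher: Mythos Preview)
Your argument is correct. The three cases are handled as you describe; in the delicate sub-case $M=t_2$ with $x_0\in\Upsilon$, the component argument goes through once one notes that to invoke Lemma~\ref{Separation:Lemma} rigorously you must take $S_0$ as a component of a \emph{compact} set --- e.g.\ intersect $\{v=t_2\}$ with a compact neighborhood $K$ of $S_0$ inside $\Upsilon$, so that $\partial V\subset K$ forces $\partial V\cap\{v=t_2\}=\emptyset$ --- and that the dichotomy ``$S_0$ meets $\partial W$ or $\overline{S_0}\subset\br W$'' follows because $S_0$ is connected and contains $x_0\in W$.

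The paper's route is different and shorter. It splits by the \emph{location} of $x_0$ rather than by the value of $M$. When $x_0\in W\cap\Upsilon$ it applies monotonicity of $v$ directly to the set $W\cap\Upsilon$ --- which is permitted by Remark~\ref{Monotone:Remark}, since $\overline{W\cap\Upsilon}\subset\br\Upsilon\cap\Omega$, where $v$ extends continuously by hypothesis (ii) --- to produce a point $y_0\in\partial(W\cap\Upsilon)$ with $\widetilde u(y_0)=M$; one then sees $y_0\in W\cap\partial\Upsilon$, whence $M=u(y_0)\in\{t_1,t_2\}$, and a final appeal to monotonicity of $u$ finishes. This single maneuver simultaneously replaces your superlevel-set trick (case $M\in(t_1,t_2)$) and your entire component-plus-Separation-Lemma argument (case $M=t_2$). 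Your approach is more hands-on but imports the topological Lemma~\ref{Separation:Lemma}; the paper avoids this by exploiting Remark~\ref{Monotone:Remark} to test $v$ against sets whose closure reaches $\partial\Upsilon\cap\Omega$, which is the one idea you did not use.
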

The proof we give below is elementary but delicate. Note that it is important to assume that $u$ is monotone in all of $\Omega$, since the function $u(x)=|x|$ on $\{x\in \R^2: 1/2<|x|<1\}$ does not have a monotone extension in the unit disk. Moreover, (iii) cannot be relaxed. Indeed, the function $u(x)=|x|$ is monotone in the punctured unit disk, considered to be the set $\Omega$; however if we set $\widetilde u=u$ in $\{x\in \R^2: 1/2\leq |x|<1\}$ and $\widetilde u = 1-|x|$ in $\{x\in \R^2 : 0<|x|<1/2\}=u^{-1}((0,1/2))$, then $\widetilde u$ is not monotone in $\Omega$. 

\begin{proof}
Aiming for a contradiction, suppose that $\widetilde u$ is not monotone in $\Omega$, so, without loss of generality, there exists an open set $U\subset \subset \Omega$ and $x_0\in U$ such that $\widetilde u(x_0)=\max_{\br U} \widetilde u >\max_{\partial U} \widetilde u$. 

Suppose first that $x_0\in U\cap \Upsilon$, so $\widetilde u(x_0)=v(x_0)$. Note that $\br{U\cap \Upsilon} \subset \br \Upsilon \cap \Omega$, so $v$ is continuous in $\br{U\cap \Upsilon} \subset \br \Upsilon \cap \Omega$ by assumption (ii) and monotone in $U\cap \Upsilon$ by (i). By Remark \ref{Monotone:Remark} we conclude that there exists $y_0\in \partial (U\cap \Upsilon) \subset (U\cap \partial \Upsilon)\cup \partial U$ such that 
$$\widetilde u(y_0)=v(y_0)=\max_{\br{U\cap \Upsilon}}v \geq v(x_0)= \widetilde u(x_0)= \max_{\br U}\widetilde u.$$
It follows that $\widetilde u(y_0)= \widetilde u(x_0)=\max_{\br U}\widetilde u> \max_{\partial U}\widetilde u$. Hence, $y_0\notin \partial U$, and we have $y_0\in U\cap \partial \Upsilon$. Since $\partial \Upsilon \cap \Omega \subset A_{t_1}\cup A_{t_2}$, we have $\widetilde u(y_0)=u(y_0)=t_1$ or $\widetilde u(y_0)=u(y_0)=t_2$. 

By the monotonicity of $u$ in $\Omega$, it follows that there exists $z_0\in \partial U$ such that 
$$u(z_0)=\max_{\br U}u\geq u(y_0).$$ 
If  $z_0\notin \Upsilon$, then $\widetilde u(z_0)=u(z_0)$, so $\widetilde u(z_0)\geq u(y_0)=\widetilde u(y_0)$. If $z_0\in \Upsilon$, then $\max_{\br U}u=u(z_0)<t_2$ so on $U\setminus \Upsilon \supset U\cap \partial \Upsilon$ we necessarily have $u\leq t_1$. Since $y_0\in U\cap \partial \Upsilon$, we have $\widetilde u(y_0)=u(y_0)=t_1$. Moreover, $\widetilde u(z_0)=v(z_0)$ and by assumption (iii) we have $\widetilde u(z_0)\geq t_1$. It follows that $\widetilde u(z_0)\geq \widetilde u(y_0)$ also in this case. Therefore, 
$$\max_{\br U} \widetilde u= \widetilde u(y_0) \leq \widetilde u(z_0) \leq \max_{\partial U} (\widetilde u),$$
which is a contradiction. 

It remains to treat the case that $x_0\in U\setminus \Upsilon$, so $\widetilde u(x_0)=u(x_0) \notin (t_1,t_2)$.  If $u(x_0)\leq t_1$, then $\max_{\partial U}\widetilde u < u(x_0)\leq t_1$, so $\widetilde u<t_1$ on $\partial U$. By assumption (iii) we have $\partial U\cap \Upsilon =\emptyset$, so $u=\widetilde u$ on $\partial U$, and $\max_{\partial U} u =\max_{\partial U} \widetilde u <u(x_0)$, where $x_0\in U$. This contradicts the monotonicity of $u$ in $\Omega$. If $u(x_0)\geq t_2$, by the monotonicity of $u$ in $\Omega$, there exists $z_0\in \partial U$ such that $u(z_0)=\max_{\br U}u \geq u(x_0) \geq t_2$. This implies that $z_0\notin \Upsilon$, so $\widetilde u(z_0)=u(z_0)$. Therefore,
$$\max_{\partial U} \widetilde u \geq \widetilde u(z_0) =u(z_0)\geq u(x_0)= \widetilde u(x_0) =\max_{\br U} \widetilde u,$$
which is a contradiction.
\end{proof}

\begin{lemma}\label{ConvergenceMonotonicity:Lemma}
Let $\{u_n\}_{n\in \N}$ be a sequence of monotone functions in $\Omega$ converging locally uniformly to a function $u$. Then $u$ is monotone in $\Omega$. 
\end{lemma}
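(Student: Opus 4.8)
The plan is to first observe that $u$ is automatically continuous, being a locally uniform limit of the continuous functions $u_n$, so that Definition \ref{Monotone:Definition} applies to $u$. It then suffices to verify the maximum condition for an arbitrary open set $U\subset\subset \Omega$; the minimum condition follows by applying the maximum condition to $-u$, which is the locally uniform limit of the monotone functions $-u_n$. Since $\partial U\subset \br U$, the inequality $\max_{\partial U} u \leq \max_{\br U} u$ is automatic, so the content is the reverse inequality.

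To prove it, I would pick $x_0\in \br U$ with $u(x_0)=\max_{\br U} u$, using compactness of $\br U$. Because $\br U$ is a compact subset of $\Omega$, the convergence $u_n\to u$ is uniform on $\br U$. For each $n$, monotonicity of $u_n$ gives a point $y_n\in \partial U$ with $u_n(y_n)=\max_{\partial U} u_n=\max_{\br U} u_n\geq u_n(x_0)$. By compactness of $\partial U$ we may pass to a subsequence along which $y_n\to y_0\in \partial U$.

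The one point requiring a little care is the passage to the limit in $u_n(y_n)\geq u_n(x_0)$. Writing $u_n(y_n)=u(y_n)+\bigl(u_n(y_n)-u(y_n)\bigr)$, the second term tends to $0$ by uniform convergence on $\br U$ and $u(y_n)\to u(y_0)$ by continuity of $u$, so $u_n(y_n)\to u(y_0)$; meanwhile $u_n(x_0)\to u(x_0)$ by (uniform, hence pointwise) convergence. Therefore $u(y_0)\geq u(x_0)=\max_{\br U} u$, and since $y_0\in\partial U$ this yields $\max_{\partial U} u\geq \max_{\br U} u$, completing the proof. There is no serious obstacle here; the argument is a routine compactness-plus-uniform-convergence exchange of limits, and the only thing to watch is that one needs convergence of $u_n$ at the moving points $y_n$, which is why uniform convergence on $\br U$ (as opposed to mere pointwise convergence) is used.
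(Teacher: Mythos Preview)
Your proof is correct and follows essentially the same approach as the paper: both use uniform convergence on the compact set $\br U$ to pass to the limit in the identity $\max_{\br U} u_n = \max_{\partial U} u_n$. The paper's version is terser, simply invoking that $\max_K u_n \to \max_K u$ whenever $u_n\to u$ uniformly on a compact set $K$, whereas you unpack this convergence explicitly via a subsequence argument at maximizing points.
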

\begin{proof}
Let $U\subset \subset \Omega$.  Then $\max_{\br U}u_n =\max_{\partial U}u_n$. Since $u_n\to u$ uniformly in $\br U$, we have $\max_{\br U}u_n \to \max_{\br U} u$ and $\max_{\partial U}u_n \to \max_{\partial U}u$. The claim for the minima is proved in the same way.
\end{proof}

\begin{corollary}\label{Gluing:Corollary}
Suppose that $u$ is monotone in $\Omega$ and consider a bi-infinite sequence of real numbers $\{t_i\}_{i\in \Z}$, such that $t_i<t_{i+1}$, $i\in \Z$, and $\lim_{i\to \pm \infty} t_i= \pm \infty$. In each region $\Upsilon_i\coloneqq u^{-1}((t_i,t_{i+1}))$ consider a function $v_i$ such that
\begin{enumerate}[\upshape(i)]
\item $v_i$ is monotone in $\Upsilon_i$,
\item $v_i$ extends continuously to $\partial \Upsilon_i\cap \Omega$ and agrees there with $u$, and
\item $t_i\leq v_i\leq t_{i+1}$ on $\Upsilon_i$.
\end{enumerate}
Then the function $\widetilde u$ that is equal to $u$ on $\Omega\setminus \bigcup_{i\in \Z} \Upsilon_i$ and to $v_i$ in $\Upsilon_i$, $i\in \Z$, is continuous and monotone in $\Omega$. 
\end{corollary}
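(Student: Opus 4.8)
The plan is to realize $\widetilde u$ as a locally uniform limit of monotone functions, each obtained from $u$ by \emph{finitely} many applications of the Gluing Lemma (Lemma \ref{Gluing:Lemma}), and then to conclude via Lemma \ref{ConvergenceMonotonicity:Lemma}. First I would note that $\widetilde u$ is well defined on all of $\Omega$: the open sets $\Upsilon_i$ are pairwise disjoint, and since $\lim_{i\to\pm\infty}t_i=\pm\infty$ we have $\R\setminus\bigcup_{i\in\Z}(t_i,t_{i+1})=\{t_i:i\in\Z\}$, so $\Omega\setminus\bigcup_i\Upsilon_i=\bigcup_i A_{t_i}$ and every point of $\Omega$ lies in exactly one of the sets $\Upsilon_i$ or $A_{t_i}$.

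For each $N\in\N$ I would build a function $\widetilde u_N$ that equals $v_i$ on $\Upsilon_i$ for $|i|\le N$ and equals $u$ on the rest of $\Omega$, and show it is monotone by gluing in the functions $v_i$, $|i|\le N$, one at a time (in any order). The key observation making each step legitimate is that hypothesis (iii) confines each $v_i$ to the closed interval $[t_i,t_{i+1}]$: if $w$ denotes the function obtained so far from $u$ by replacing it with $v_j$ on finitely many of the $\Upsilon_j$, then for every index $k$ not yet treated one still has $w^{-1}((t_k,t_{k+1}))=\Upsilon_k$ (because each replaced value lies in an interval $[t_j,t_{j+1}]$, $j\neq k$, meeting $(t_k,t_{k+1})$ in at most an endpoint) and $w=u$ on $\partial\Upsilon_k\cap\Omega\subset A_{t_k}\cup A_{t_{k+1}}$ (because $A_{t_k}$ and $A_{t_{k+1}}$ are disjoint from every $\Upsilon_j$). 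Hence $v_k$ satisfies the hypotheses of Lemma \ref{Gluing:Lemma} relative to the monotone function $w$ and the region $\Upsilon_k$, so gluing $v_k$ in produces again a monotone --- in particular continuous, by Definition \ref{Monotone:Definition} --- function. After $2N+1$ such steps we obtain $\widetilde u_N$, which is monotone in $\Omega$.

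Next I would show that $\widetilde u_N\to\widetilde u$ locally uniformly; in fact $\widetilde u_N=\widetilde u$ on any fixed compact $K\subset\Omega$ once $N$ is large enough. Indeed, $u(K)$ is bounded, so there is $M\in\N$ with $u(K)\subset(t_{-M},t_M)$; then each $x\in K$ lies in $\Upsilon_j$ for some $-M\le j\le M-1$ or in $A_{t_j}$ for some $j$, and in either case $\widetilde u_N(x)=\widetilde u(x)$ whenever $N\ge M$ (in the first case both sides equal $v_j(x)$, in the second both equal $u(x)$, since $x$ then belongs to no $\Upsilon_i$). Consequently $\widetilde u$ is a locally uniform limit of the continuous functions $\widetilde u_N$, hence continuous, and Lemma \ref{ConvergenceMonotonicity:Lemma} yields that $\widetilde u$ is monotone in $\Omega$, as claimed.

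The part requiring the most care is the inductive step: one must verify that each partial gluing leaves the pre-image of every not-yet-treated interval $(t_k,t_{k+1})$ exactly equal to $\Upsilon_k$ and leaves $u$ unchanged on $\partial\Upsilon_k\cap\Omega$, so that the Gluing Lemma keeps applying. This is precisely where the range restriction $t_i\le v_i\le t_{i+1}$ in hypothesis (iii) is indispensable --- the same restriction whose failure is illustrated by the counterexample following Lemma \ref{Gluing:Lemma}.
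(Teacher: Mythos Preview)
Your proposal is correct and follows essentially the same route as the paper: build the finite truncations $\widetilde u_N$ by inductively applying the Gluing Lemma (using assumption (iii) to ensure that $\widetilde u_N^{-1}((t_k,t_{k+1}))=\Upsilon_k$ for each untreated index $k$), observe that $\widetilde u_N$ eventually stabilizes on each compact subset of $\Omega$, and conclude via Lemma~\ref{ConvergenceMonotonicity:Lemma}. Your write-up spells out the inductive step in somewhat more detail than the paper does, but the argument is the same.
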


\begin{proof}
By Lemma \ref{Gluing:Lemma} and induction, one can show that for each $n\in \N$ the function 
$$\widetilde u_n \coloneqq u \cdot\x_{\Omega\setminus \bigcup_{|i|\leq n} \Upsilon_i}+ \sum_{|i|\leq n} v_i \cdot \x_{\Upsilon_i}$$
is continuous and monotone in $\Omega$. The important observation here is that $$u^{-1}((t_j,t_{j+1}))=\Upsilon_j= \widetilde u_n^{-1}((t_j,t_{j+1}))$$ for all $n\in \N$ and $|j|>n$, by assumption (iii). 

If $U\subset \subset \Omega$ is an open set, then there exists $n\in \N$ such that
$$t_{-n} < \min_{\br U} u \leq \max_{\br U} u <t_n.$$
Hence, $\Upsilon_j\cap \br U=\emptyset$ for $|j|> n$ and $\widetilde u_m=\widetilde u_n=\widetilde u$ on $\br U$ for all $m\geq n$. It follows that $\{\widetilde u_n\}_{n\in \N}$ converges locally uniformly in $\Omega$ to $\widetilde u$, and therefore $\widetilde u$ is continuous and monotone by Lemma \ref{ConvergenceMonotonicity:Lemma}.
\end{proof}

In order to establish the approximation by $C^{\infty}$-smooth functions in Theorem \ref{Intro:Approximation:Theorem}, we need to introduce the notion of strict monotonicity and prove some further, more specialized, gluing lemmas. 

\begin{definition}
A continuous function $f\colon \Omega\to \R$ is called strictly monotone if it is monotone and for each open set $U\subset\subset \Omega$ the maximum and minimum of $f$ on $\br U$ are not attained at any point of $U$.
\end{definition}

\begin{example}
If a function $f$ is of class $C^1$ and has no critical points in $\Omega$, then it has no local maxima and minima in $\Omega$ so it is strictly monotone.
\end{example}

\begin{lemma}[Gluing Lemma]\label{GluingMonotonicity:Lemma}
Let $A,V$ be open subsets of $\Omega$ with $\br V\cap \Omega\subset   A$. Suppose that the function $u$ is monotone when restricted to $\Omega \setminus \br V$ and strictly monotone when restricted to $A$. Then $u$ is monotone in $\Omega$.  
\end{lemma}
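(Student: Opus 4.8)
The plan is to argue by contradiction, reducing the failure of monotonicity to a statement about the set where $u$ attains its maximum on a subdomain, and then using the two hypotheses separately: strict monotonicity on $A$ to push that maximum set off $\overline V$, and ordinary monotonicity on $\Omega\setminus\overline V$ to dispose of what remains. The heuristic is that $\Omega=(\Omega\setminus\overline V)\cup A$ is an open cover (since $\overline V\cap\Omega\subset A$), so we are gluing a monotone and a strictly monotone piece, with the strictness compensating for the overlap. If $V=\emptyset$ there is nothing to prove, so assume $V\neq\emptyset$.

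Suppose $u$ is not monotone in $\Omega$. Replacing $u$ by $-u$ if necessary (this preserves all the hypotheses), I may assume there is an open set $U\subset\subset\Omega$ with $M:=\max_{\overline U}u>\max_{\partial U}u=:m$. Let $K:=\{x\in\overline U:u(x)=M\}$, a nonempty compact set; since $u\leq m<M$ on $\partial U$ we have $K\subset U$. The first step is to show $K\cap A=\emptyset$. Indeed, if $x_1\in K\cap A$, choose $\rho>0$ with $\overline{B(x_1,\rho)}\subset U\cap A$; then $\max_{\overline{B(x_1,\rho)}}u=u(x_1)=M$ is attained at the interior point $x_1$, contradicting the strict monotonicity of $u|_A$. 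Since $K\subset\Omega$ and $\overline V\cap\Omega\subset A$, it follows that $K\cap\overline V=\emptyset$, so $K$ is a compact subset of the open set $R:=U\setminus\overline V=U\cap(\Omega\setminus\overline V)$.

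The second step is to surround $K$ by a thin neighborhood inside $R$. Pick $\varepsilon>0$ with $\varepsilon<\dist(K,\R^2\setminus R)$ — this distance is positive because $K$ is compact, $R$ is open and contains $K$, and $R\subset U$ is bounded so $R\neq\R^2$ — and set $D:=\{x:\dist(x,K)<\varepsilon\}$. Then $D$ is open and bounded, and $\overline D\subset\{x:\dist(x,K)\leq\varepsilon\}\subset R\subset\Omega\setminus\overline V$, so $D\subset\subset\Omega\setminus\overline V$. Since $\overline D\subset\overline U$ we have $u\leq M$ on $\overline D$, while $K\subset D$ provides a point where $u=M$; hence $\max_{\overline D}u=M$. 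On the other hand, $\partial D$ is a nonempty compact subset of $\overline D\subset\overline U$ with $\partial D\cap K=\emptyset$ (as $D$ is open and $K\subset D$), and the only points of $\overline U$ where $u=M$ are those of $K$; therefore $u<M$ on $\partial D$, so $\max_{\partial D}u<M$. Thus $\max_{\overline D}u>\max_{\partial D}u$ with $D\subset\subset\Omega\setminus\overline V$, contradicting the monotonicity of $u$ on $\Omega\setminus\overline V$. Applying the same reasoning to $-u$ handles the minimum, and we conclude that $u$ is monotone in $\Omega$.

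The delicate point — and the one I expect to require the most care — is the second step: producing a neighborhood $D$ of $K$ whose closure lies inside $U$ and at the same time avoids $\overline V$, so that one can assert both $\max_{\overline D}u=M$ (using $K\subset D\subset\overline U$) and $\max_{\partial D}u<M$ (using that within $\overline U$ the level $M$ is attained exactly on $K$). This is precisely why the hypotheses are shaped as they are: strict monotonicity on the neighborhood $A$ of $\overline V$ forces $K\cap\overline V=\emptyset$, which is what makes such a $D$ available; were $u$ merely monotone on $A$, the maximum set $K$ could touch $\overline V$ and one could not separate $\partial D$ from both $\overline V$ and $K$ simultaneously.
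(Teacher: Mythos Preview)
Your proof is correct and takes a somewhat different route from the paper's. The paper picks a single maximum point $x_0\in U$ and case-splits on whether $x_0\in\overline V$: if so, it uses $U\cap A$ as a test set to contradict strict monotonicity; if not, it applies monotonicity of $u|_{\Omega\setminus\overline V}$ to the set $U\setminus\overline V$, pushing the maximum to a point of $U\cap\partial V\subset A$, and then again contradicts strict monotonicity via $U\cap A$. By contrast, you work with the full maximum set $K$, use strict monotonicity just once (locally, via a small closed ball in $U\cap A$) to show $K\cap A=\emptyset$, and then build a tailor-made neighborhood $D$ of $K$ with $D\subset\subset\Omega\setminus\overline V$ to contradict monotonicity there. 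Your approach has the advantage that both test sets are manifestly precompact in the relevant domain, so the monotonicity definitions apply directly; the paper's test sets $U\cap A$ and $U\setminus\overline V$ are not in general precompact in $A$ or in $\Omega\setminus\overline V$, and the argument there implicitly relies on the continuous-extension observation of Remark~\ref{Monotone:Remark}. The paper's proof is slightly shorter and uses strict monotonicity as the terminal contradiction in both cases, whereas yours uses it only to locate $K$ and finishes with ordinary monotonicity.
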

\begin{proof}
If the statement fails, there exists an open set $U\subset \subset \Omega$ such that the maximum or minimum of $u$ in $U$ is not attained at $\partial U$, but at an interior point $x_0\in U$. Without loss of generality, assume that $\max_{\partial U} u < \max_{\br U}u =u(x_0)$. Note that $U$ cannot be contained in $\Omega\setminus \br V$ or in $A$, by the monotonicity of $u$ there. Hence, $U$ intersects both $\br V$ and $\Omega\setminus \br A$.

If $x_0\in U\cap \br V \subset U\cap A$, then $u(x_0)= \max_{\br U}u\geq \max_{\br{U\cap A}}u$ and this contradicts the strict monotonicity of $u$ in $A$.

If $x_0\in U\setminus \br V\subset \Omega\setminus \br V$, then by the monotonicity of $u$ there, there exists $x_1\in \partial (U\setminus \br V)$ such that $u(x_1)\geq u(x_0)> \max_{\partial U}u$. We necessarily have that $x_1\notin \partial U$. Since $\partial (U\setminus \br V)\subset \partial U \cup (U\cap \partial V)$, it follows that $x_1\in U\cap \partial V \subset U\cap A$. Then we have $u(x_1)\geq  u(x_0) = \max_{\br U}u \geq \max_{\br{U\cap A}}u$. Again, this contradicts the strict monotonicity of $u$ in $A$.
\end{proof}

\begin{lemma}\label{GluingConstant:Lemma}
Suppose that $J$ is a connected closed subset of $\Omega$ that exits all compact subsets of $\Omega$. If $u$ is monotone in $\Omega\setminus  J$ and $u$ is constant in $J$, then $u$ is monotone in $\Omega$.
\end{lemma}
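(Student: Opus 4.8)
The plan is to argue by contradiction, mimicking the structure of the proof of Lemma \ref{GluingMonotonicity:Lemma}. Suppose $u$ fails to be monotone in $\Omega$; then there is an open set $U\subset\subset\Omega$ and an interior point $x_0\in U$ at which, say, $\max_{\br U}u = u(x_0) > \max_{\partial U}u$. Since $u$ is monotone on $\Omega\setminus J$, the point $x_0$ cannot lie in $\Omega\setminus J$ (otherwise the monotonicity of $u|_{\Omega\setminus J}$, applied on the open set $U\setminus J$, would force a boundary point $x_1\in\partial(U\setminus J)\subset\partial U\cup(U\cap J)$ with $u(x_1)\ge u(x_0)$; if $x_1\in\partial U$ this contradicts $\max_{\partial U}u<u(x_0)$, and if $x_1\in U\cap J$ then by constancy of $u$ on $J$ we get $u\equiv u(x_1)\ge u(x_0)$ on all of $J$). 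So $x_0\in U\cap J$, and hence $u(x_0)$ equals the constant value $c$ of $u$ on $J$, and in fact $u\equiv c$ on $U\cap J$.

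The key geometric step is to exploit the hypothesis that $J$ exits every compact subset of $\Omega$. This means $J\cap\br U$ — a nonempty compact set containing $x_0$ — cannot have a component that is compactly contained in $U$; more precisely, since $J$ is connected and leaves $\br U$, the connected set $J$ meets $\partial U$ (if $J\subset U$ then $J$ would be a compact subset of $U\subset\subset\Omega$, contradicting that $J$ exits all compact subsets of $\Omega$). Therefore there is a point $p\in J\cap\partial U$, and by constancy $u(p)=c=u(x_0)=\max_{\br U}u$. But $p\in\partial U$, so $\max_{\partial U}u\ge c=u(x_0)$, directly contradicting $\max_{\partial U}u<u(x_0)$. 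The case of the minimum is symmetric. Hence $u$ is monotone in $\Omega$.

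The main obstacle is making rigorous the claim that $J$ must intersect $\partial U$. This is where the ``exits all compact subsets'' hypothesis is essential and where the connectedness of $J$ is used: if $J$ avoided $\partial U$, then $J\cap\br U = J\cap U$ would be both open and closed in $J$ (open because $J\cap\partial U=\emptyset$, closed because $J$ is closed in $\Omega$ and $\br U\subset\Omega$ is closed), so by connectedness either $J\cap U=\emptyset$ — but $x_0\in J\cap U$ — or $J\subset U$, which makes $J$ a compact subset of $\Omega$ contained in $U\subset\subset\Omega$, contradicting the hypothesis. Everything else is a routine sign-chasing of maxima and minima exactly as in the proof of Lemma \ref{GluingMonotonicity:Lemma}, using only the constancy of $u$ on $J$ and the monotonicity of $u$ on $\Omega\setminus J$. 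I would also remark that, as in the discussion after Lemma \ref{Gluing:Lemma}, the hypothesis that $J$ exits all compact subsets of $\Omega$ cannot be dropped: a compact connected level-type set $J$ sitting inside $\Omega$ around which $u$ is only monotone off $J$ need not admit a monotone extension, by the same punctured-disk type counterexample.
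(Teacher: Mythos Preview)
Your proof is correct and follows essentially the same approach as the paper's: argue by contradiction, reduce to the case where the interior maximum is attained at a point of $J\cap U$, and then use connectedness of $J$ together with the ``exits all compact subsets'' hypothesis to produce a point of $J\cap\partial U$ carrying the same constant value, contradicting $\max_{\partial U}u<u(x_0)$. One small wording issue: your parenthetical does not actually show that $x_0$ \emph{cannot} lie in $\Omega\setminus J$ (the subcase $x_1\in U\cap J$ yields no contradiction), but it does show that the maximum is also attained at a point of $J\cap U$, which is all you need---the paper similarly just asserts $x_0\in J\cap U$ without further comment.
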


\begin{proof}
Assume that $u=t$ in $J$. Suppose that $u$ is not monotone in $\Omega$, so, without loss of generality, we can find an open set $U\subset\subset \Omega$ and a point $x_0\in U$ with $u(x_0)=\max_{\br U}u >\max_{\partial U}u$. By the monotonicity of $u$ in $\Omega\setminus J$, we necessarily have that $U$ intersects both $\Omega\setminus J$ and $J$, and $x_0\in J \cap U$. Thus, $u(x_0)=t=\max_{\br U}u$. Since $J$ is connected and is not contained in $\br U$, we have $J\cap \partial U\neq \emptyset$. Hence, there exists $y_0\in \partial U$ such that $u(y_0)=t\leq \max_{\partial U}u$, a contradiction.
\end{proof}

\section{Preliminaries on $p$-harmonic functions}\label{Section:Preliminaries}

Let $\Omega\subset \R^2$ be an open set. A function $u\colon \Omega\to \R$ is called $p$-harmonic, $1<p<\infty$, if $u\in W^{1,p}_{\loc}(\Omega)$ and 
\begin{align*}
\Delta_pu \coloneqq \textrm{div}(|\nabla u|^{p-2} \nabla u) =0
\end{align*}
in the sense of distributions. That is, 
\begin{align*}
\int_\Omega  \langle |\nabla u|^{p-2} \nabla u , \nabla \phi \rangle =0
\end{align*}
for all $\phi \in C^\infty_c(\Omega)$, i.e., compactly supported smooth functions in $\Omega$. 

We mention some standard facts about $p$-harmonic functions. There exists an exponent $\alpha \in (0,1]$ such that every $p$-harmonic function $u$ on $\Omega$ lies in $C^{1,\alpha}_{\loc}(\Omega)$ \cite{Uralceva:C1alpha}, \cite{Evans:C1alpha}, \cite{Lewis:C1alpha}. In fact, outside the \textit{singular set}, i.e., the set where $\nabla u=0$, the function $u$ is $C^\infty$-smooth by elliptic regularity theory; see \cite[Corollary 8.11, p.~186]{GilbargTrudinger:pde}. The singular set consists of isolated points, unless $u$ is constant \cite[Corollary 1]{Manfredi:pharmonicplane}. Finally, the maximum principle \cite[p.~111]{Heinonenetal:DegenerateElliptic} implies that $p$-harmonic functions are monotone.

\begin{prop}[Solution to the Dirichlet Problem]\label{DirichletProblem:Prop}
Let $\Omega\subset \R^2$ be a bounded open set and let $u_0\in W^{1,p}(\Omega)$ be given Dirichlet data. There exists a unique $p$-harmonic function $u\in W^{1,p}(\Omega)$ that minimizes the $p$-harmonic energy
\begin{align*}
E_p(v)\coloneqq \int_\Omega |\nabla  v|^p
\end{align*}
among all functions $v\in W^{1,p}(\Omega)$ with $v-u_0 \in W^{1,p}_0(\Omega)$.  
\end{prop}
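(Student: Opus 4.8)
This is the classical direct method in the calculus of variations together with strict convexity. Set
$\mathcal{A}=\{v\in W^{1,p}(\Omega): v-u_0\in W^{1,p}_0(\Omega)\}$, a non-empty (it contains $u_0$) closed convex subset of $W^{1,p}(\Omega)$, and let $m=\inf_{v\in\mathcal{A}}E_p(v)\geq 0$. First I would take a minimizing sequence $\{v_k\}\subset\mathcal{A}$ with $E_p(v_k)\to m$. Since $\Omega$ is bounded and $v_k-u_0\in W^{1,p}_0(\Omega)$, the Poincar\'e inequality gives $\|v_k-u_0\|_{L^p(\Omega)}\leq C\|\nabla v_k-\nabla u_0\|_{L^p(\Omega)}$, and since $\|\nabla v_k\|_{L^p(\Omega)}\to m^{1/p}$ is bounded, $\{v_k\}$ is bounded in $W^{1,p}(\Omega)$. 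As $1<p<\infty$ this space is reflexive, so after passing to a subsequence $v_k\rightharpoonup u$ weakly in $W^{1,p}(\Omega)$. Being a norm-closed convex subspace, $W^{1,p}_0(\Omega)$ is weakly closed, hence $u-u_0\in W^{1,p}_0(\Omega)$, i.e.\ $u\in\mathcal{A}$. The functional $v\mapsto E_p(v)$ is convex and strongly continuous, hence weakly lower semicontinuous, so $E_p(u)\leq\liminf_k E_p(v_k)=m$, forcing $E_p(u)=m$. Thus $u$ minimizes $E_p$ over $\mathcal{A}$.

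\textbf{Uniqueness.} Here I would invoke the strict convexity of $\xi\mapsto|\xi|^p$ on $\R^2$, valid for $1<p<\infty$. If $u_1,u_2\in\mathcal{A}$ both attain $m$, then $w:=(u_1+u_2)/2\in\mathcal{A}$ and pointwise $|\nabla w|^p\leq\tfrac12|\nabla u_1|^p+\tfrac12|\nabla u_2|^p$, with equality at a point only when $\nabla u_1=\nabla u_2$ there. Integrating, $m\leq E_p(w)\leq\tfrac12 E_p(u_1)+\tfrac12 E_p(u_2)=m$, so equality holds a.e.\ and $\nabla(u_1-u_2)=0$ a.e. Since $u_1-u_2\in W^{1,p}_0(\Omega)$, the Poincar\'e inequality again gives $u_1=u_2$.

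\textbf{The minimizer is $p$-harmonic, and conversely every $p$-harmonic competitor equals it.} For $\phi\in C^\infty_c(\Omega)\subset W^{1,p}_0(\Omega)$ the function $t\mapsto E_p(u+t\phi)$ is finite, convex, and differentiable with derivative $p\int_\Omega\langle|\nabla u|^{p-2}\nabla u,\nabla\phi\rangle$ at $t=0$ (differentiation under the integral sign is justified since $\xi\mapsto|\xi|^p$ is $C^1$ and the difference quotients are dominated by an $L^1$ function built from $|\nabla u|^{p-1}\in L^{p'}(\Omega)$ and $|\nabla\phi|\in L^p(\Omega)$ via H\"older). As $u$ is a minimizer this derivative vanishes, so $\int_\Omega\langle|\nabla u|^{p-2}\nabla u,\nabla\phi\rangle=0$ for all $\phi\in C^\infty_c(\Omega)$; that is, $u$ is $p$-harmonic. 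Conversely, if $v\in\mathcal{A}$ is $p$-harmonic, the vanishing of the same derivative together with the convexity of $t\mapsto E_p(v+t\psi)$ gives $E_p(v+\psi)\geq E_p(v)$ for all $\psi\in C^\infty_c(\Omega)$, hence for all $\psi\in W^{1,p}_0(\Omega)$ by density; so $v$ minimizes $E_p$ over $\mathcal{A}$, and $v=u$ by the uniqueness step. This yields the unique $p$-harmonic energy minimizer claimed.

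\textbf{Main obstacle.} Nothing here is deep; the one genuine analytic input is the weak lower semicontinuity of $E_p$, which I would obtain from the fact that a norm-continuous convex functional on a Banach space is weakly lower semicontinuous (e.g.\ via Mazur's lemma). The other mildly delicate point, the differentiation under the integral sign in the Euler--Lagrange computation, is routine once the integrability of $|\nabla u|^{p-1}$ and $|\nabla\phi|$ is recorded. All of this is standard elliptic variational theory, so the proof can be kept brief.
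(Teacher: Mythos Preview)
Your argument is correct and is precisely the standard direct-method proof one finds in the reference the paper cites. The paper does not give its own proof of this proposition; it simply points to \cite[Chapter 5]{Heinonenetal:DegenerateElliptic}, so there is nothing to compare.
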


See e.g.\ \cite[Chapter 5]{Heinonenetal:DegenerateElliptic} for a general approach.

An open Jordan arc $J$ in $\R^2$ is $C^\infty$-\textit{smooth} if there exists an open set $U\supset J$ and a $C^\infty$-smooth diffeomorphism $\phi \colon U\to \R^2$ such that $\phi(J)=\R$. In this case, the open set $U\supset J$ may be taken to be arbitrarily close to $J$; see also the classification of $1$-manifolds in Theorem \ref{Classification:theorem}.

Let $\Omega\subset \R^2$ be an open set and $J\subset \partial \Omega$ be a Jordan arc. We say that $J$ is an \textit{essential} boundary arc if for each $x_0\in J$ there exists a neighborhood $U$ of $x_0$ and a homeomorphism $\phi\colon U \to \R^2$ such that $\phi(J\cap U)=\R=\{(s,0)\in \R^2:s\in \R\}$ and $\phi(\Omega\cap U)= \R^2_+= \{(s,t)\in \R^2: t>0\}$.  

\begin{lemma}[Continuous extension]\label{ContinuousExtension:Lemma}
Suppose that $u$ and $u_0$ are as in Proposition \ref{DirichletProblem:Prop}. Assume further that $J\subset \partial \Omega$ is an essential open Jordan arc. If $u_0$ extends continuously to $\Omega\cup J$, then $u$ also extends continuously to $\Omega\cup J$ and $u|J=u_0|J$.   
\end{lemma}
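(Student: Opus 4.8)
The plan is to reduce Lemma \ref{ContinuousExtension:Lemma} to two assertions: \emph{(a)} every point of $J$ is a regular boundary point of $\Omega$ with respect to the $p$-Laplacian, and \emph{(b)} at a regular boundary point the variational solution $u$ of Proposition \ref{DirichletProblem:Prop} attains the value $\lim_{\Omega\ni x\to x_0}u_0(x)$ whenever that limit exists. Assertion \emph{(b)} is standard nonlinear potential theory, so the real work is in \emph{(a)}. Throughout I fix $x_0\in J$ and take a homeomorphism $\phi\colon U\to\R^2$ as in the definition of an essential boundary arc, with $\phi(J\cap U)=\R$ and $\phi(\Omega\cap U)=\R^2_+$. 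I would first record that $W\coloneqq\phi^{-1}(\{t<0\}\cap\phi(U))$ is an open subset of $U$ disjoint from $\br\Omega$ (a sequence in $\Omega$ cannot converge to a point whose $\phi$-image has negative second coordinate), which forces $\partial\Omega\cap U=\phi^{-1}(\R)=J\cap U$. In particular $J$ is relatively open in $\partial\Omega$ and $\partial\Omega$ coincides with $J$ near $x_0$, so the continuous extension of $u_0$ to $\Omega\cup J$ in fact extends $u_0$ continuously to a full relative neighborhood of $x_0$ in $\br\Omega$, and we may speak of $u_0(x_0)$.

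For \emph{(a)} the plan is as follows. Set $\gamma\coloneqq\phi^{-1}(\{(0,s):-\delta\le s\le 0\})$ with $\delta>0$ small; then $\gamma$ is a nondegenerate Jordan arc with endpoint $x_0$ and $\gamma\setminus\{x_0\}\subset W\subset\R^2\setminus\br\Omega$. Thus the complement of $\Omega$ contains a genuine continuum issuing from $x_0$, and for every $\rho<\max_{y\in\gamma}|y-x_0|$ the component of $\gamma\cap\br{B(x_0,\rho)}$ containing $x_0$ is a continuum of diameter at least $\rho$ contained in $\br{B(x_0,\rho)}$. I would then invoke the classical capacity estimate, valid in the plane for every $p>1$, that such a continuum has variational $p$-capacity in $B(x_0,2\rho)$ bounded below by $c(p)\cdot\mathrm{cap}_p(\br{B(x_0,\rho)},B(x_0,2\rho))$ for a constant $c(p)>0$; this is precisely the range of $p$ in which one-dimensional continua in $\R^2$ are uniformly $p$-fat, reflecting that $\dim_{\mathrm H}(\gamma)\ge 1>2-p=n-p$. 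Hence $\R^2\setminus\Omega$ is uniformly $p$-fat at $x_0$, and the sufficiency part of the Wiener criterion for the $p$-Laplacian (Maz'ya; see also Lindqvist--Martio and \cite[Chapter~6]{Heinonenetal:DegenerateElliptic}) gives that $x_0$ is regular. When $p>2$ this is immediate, since then even singletons have positive $p$-capacity.

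For \emph{(b)} I would quote the boundary regularity theorem for $p$-harmonic functions from \cite[Chapter~9]{Heinonenetal:DegenerateElliptic}: since $x_0$ is regular and $u_0\in W^{1,p}(\Omega)$ has a continuous extension to a relative neighborhood of $x_0$ in $\br\Omega$ with value $u_0(x_0)$, the variational solution satisfies $\lim_{\Omega\ni x\to x_0}u(x)=u_0(x_0)$. Concretely, one uses that regularity of $x_0$ is equivalent to the existence of a barrier there (a positive $p$-superharmonic function on $\Omega\cap B(x_0,r)$ tending to $0$ at $x_0$ and bounded below away from $x_0$), fixes $\varepsilon>0$ and a radius $\rho$ on which $|u_0-u_0(x_0)|<\varepsilon$ and $\br{B(x_0,\rho)}\cap\partial\Omega\subset J$, and applies the comparison principle in the Sobolev sense to $u$ and $u_0(x_0)\pm(\varepsilon+Mw)$ on $\Omega\cap B(x_0,\rho)$ for a suitably large $M$; letting $x\to x_0$ and then $\varepsilon\to 0$ yields the limit. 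Since this holds at every $x_0\in J$, a routine diagonal argument shows that the resulting extension of $u$ is continuous on all of $\Omega\cup J$, and by construction it agrees with $u_0$ on $J$.

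The main obstacle is \emph{(a)}, or rather the fact that $\phi$ carries no quantitative (metric) information: $\Omega$ need not satisfy an exterior corkscrew or cone condition at $x_0$, so one cannot transport any capacity estimate through $\phi$. What rescues the argument is purely planar topology: $\phi$ nonetheless forces a nondegenerate continuum — indeed an open set $W$ — to lie in the complement of $\Omega$ at every scale around $x_0$, and in dimension $2$ such continua are $p$-fat for the entire range $1<p<\infty$; this is exactly why no additional hypothesis on $\Omega$ is needed. A secondary point, routine but requiring care in \emph{(b)}, is that $u_0$ is only assumed continuous near $x_0$ (not on all of $\br\Omega$, and not a priori bounded on $\Omega$), so the comparison must be localized to a small ball on which $\partial\Omega\subset J$ and $u_0$ is nearly constant.
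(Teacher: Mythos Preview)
Your approach is correct and coincides with the paper's: both reduce to showing that each $x_0\in J$ is a regular boundary point by exhibiting a nondegenerate continuum in $\R^2\setminus\Omega$ through $x_0$ (so that the complement is $p$-thick/$p$-fat there), and then invoking standard boundary regularity for the $p$-Laplacian from \cite{Heinonenetal:DegenerateElliptic}. The paper is simply terser---it cites \cite[Theorem 6.27]{Heinonenetal:DegenerateElliptic} together with the capacity estimate \cite[Lemma~2]{Lehrback:CapacityEstimate}---whereas you spell out the continuum construction via $\phi$ and the barrier/comparison step explicitly.
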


This follows from \cite[Theorem 6.27]{Heinonenetal:DegenerateElliptic}, which implies that each point $x_0\in J$ is a \textit{regular point} for the $p$-Laplace operator, since $\partial \Omega$ is \textit{$p$-thick} at $x_0$; see \cite[Lemma 2]{Lehrback:CapacityEstimate} for a relevant capacity estimate.

\begin{lemma}[Comparison]\label{Comparison:Lemma}
Suppose that $u$ and $u_0$ are as in Proposition \ref{DirichletProblem:Prop}. If there exists $M\in \R$ such that $u_0\leq M$ in $\Omega$, then $u\leq M$ in $\Omega$.
\end{lemma}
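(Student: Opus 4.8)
The plan is to use $v:=\min\{u,M\}$ as a competitor in the variational problem of Proposition~\ref{DirichletProblem:Prop} and then to invoke uniqueness of the minimizer. First I would record that $v\in W^{1,p}(\Omega)$ with $\nabla v=\chi_{\{u<M\}}\nabla u$ almost everywhere, so that
\[
E_p(v)=\int_{\{u<M\}}|\nabla u|^p\leq \int_\Omega |\nabla u|^p=E_p(u).
\]
If $v$ turns out to be admissible, i.e.\ $v-u_0\in W^{1,p}_0(\Omega)$, then this inequality together with the minimality of $u$ forces $E_p(v)=E_p(u)$, so $v$ is itself a minimizer of $E_p$ among admissible functions; by the uniqueness statement in Proposition~\ref{DirichletProblem:Prop} (which rests on the strict convexity of $\xi\mapsto |\xi|^p$ for $1<p<\infty$ together with the Poincaré inequality on the bounded set $\Omega$) we get $v=u$ a.e.\ in $\Omega$. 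Hence $u=\min\{u,M\}\leq M$ a.e., and since $u$ is continuous (in fact $C^{1,\alpha}_{\loc}$) this yields $u\leq M$ everywhere in $\Omega$.

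The only nontrivial point is therefore the admissibility step $v-u_0\in W^{1,p}_0(\Omega)$, and this is where I expect the main (though still routine) obstacle to lie. Write $v-u_0=(u-u_0)-(u-M)^+$. The first term lies in $W^{1,p}_0(\Omega)$ by hypothesis, so it suffices to show $(u-M)^+\in W^{1,p}_0(\Omega)$. Since $u_0\leq M$ in $\Omega$ we have $u-M=(u-u_0)+(u_0-M)\leq u-u_0$, hence
\[
0\leq (u-M)^+\leq (u-u_0)^+,
\]
and $(u-u_0)^+\in W^{1,p}_0(\Omega)$ because taking the positive part preserves $W^{1,p}_0$. Thus $(u-M)^+$ is a $W^{1,p}(\Omega)$ function squeezed between $0$ and a nonnegative element of $W^{1,p}_0(\Omega)$, so it belongs to $W^{1,p}_0(\Omega)$ by the standard lattice property of $W^{1,p}_0$; see e.g.\ \cite{Heinonenetal:DegenerateElliptic}.

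As a remark, one can also avoid the variational formulation entirely: by density of $C^\infty_c(\Omega)$ in $W^{1,p}_0(\Omega)$ and the $L^{p'}$-integrability of $|\nabla u|^{p-2}\nabla u$, the weak equation $\int_\Omega\langle |\nabla u|^{p-2}\nabla u,\nabla\phi\rangle=0$ holds for all $\phi\in W^{1,p}_0(\Omega)$. Testing with $\phi=(u-M)^+\in W^{1,p}_0(\Omega)$ (admissible by the lattice argument above) and using $\nabla(u-M)^+=\chi_{\{u>M\}}\nabla u$ gives $\int_{\{u>M\}}|\nabla u|^p=0$, so $\nabla (u-M)^+=0$ a.e.\ in $\Omega$; the Poincaré inequality on the bounded set $\Omega$ then forces $(u-M)^+\equiv 0$, i.e.\ $u\leq M$.
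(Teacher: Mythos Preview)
Your proof is correct, and both the variational and the weak-equation versions are standard and complete; the lattice-property step (a nonnegative $W^{1,p}$ function dominated by an element of $W^{1,p}_0$ lies in $W^{1,p}_0$) is exactly the right tool, and it is indeed in \cite{Heinonenetal:DegenerateElliptic}.

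The paper takes a different route. Rather than truncating $u$ and invoking uniqueness in the variational problem, it exhausts $\Omega$ by regular open sets $\Omega_n$, solves the $p$-Laplace equation on each $\Omega_n$ with smooth boundary data $u_{0,n}\leq M+1/n$ approximating $u_0$, applies the classical maximum principle on $\Omega_n$ (where the solution attains its boundary values continuously) to get $u_n\leq M+1/n$, and then passes to the limit to obtain a $p$-harmonic function $\widetilde u\leq M$ with the correct boundary data; uniqueness then gives $u=\widetilde u$. Your argument is shorter and more self-contained: it stays entirely within the variational framework of Proposition~\ref{DirichletProblem:Prop} and needs only the lattice structure of $W^{1,p}_0$. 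The paper's approach, by contrast, leans on several cited results (regular exhaustions, the maximum principle for continuous boundary values, stability under limits) but has the virtue of reducing the problem to the classical interior maximum principle on nice domains, which some readers may find more transparent.
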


To see this, one can take an exhaustion of $\Omega$ by regular open sets $\Omega_n$ (see \cite[Corollary 6.32]{Heinonenetal:DegenerateElliptic}) and solve the $p$-Laplace equation in $\Omega_n$ with boundary data $u_{0,n}$ that smoothly approximates in $W^{1,p}(\Omega_n)$ the function $u_0$ and satisfies $u_{0,n}\leq M+1/n$. The solution $u_n$ of the $p$-Laplace equation satisfies $u_n=u_{0,n}$ on $\partial \Omega_n$, and by the maximum principle \cite[p.~111]{Heinonenetal:DegenerateElliptic} we have $u_n\leq M+1/n$ in $\Omega_n$. Now, by passing to a limit \cite[Theorem 6.12]{Heinonenetal:DegenerateElliptic}, we obtain a $p$-harmonic function $\widetilde u\leq M$ that necessarily solves the Dirichlet Problem in $\Omega$. Since the solution is unique, we have $u=\widetilde u\leq M$.

\begin{prop}[Non-degeneracy up to boundary]\label{NonDegeneracy:Prop}
Suppose that $u$ is a $p$-harmonic function in $\Omega$. Moreover, suppose that $J\subset \partial \Omega$ is a $C^\infty$-smooth essential open Jordan arc on which $u$ extends continuously and is equal to a constant $t$, and suppose that $u\neq t$ in a neighborhood of $J$ in $\Omega$. Then $|\nabla u| \neq 0$ in a neighborhood of $J$ in $\Omega$ and in fact each point $x_0\in J$ has a neighborhood in $\Omega$ in which $|\nabla u|$ is bounded away from $0$.   
\end{prop}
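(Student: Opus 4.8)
The plan is to reduce to a boundary Harnack / boundary regularity statement for $p$-harmonic functions on a half-disk. Since $J$ is a $C^\infty$-smooth essential open Jordan arc in $\partial\Omega$, near a fixed point $x_0\in J$ we may apply a diffeomorphism of $\R^2$ to straighten $J$ to a segment on $\R\times\{0\}$ and flatten $\Omega$ locally to the upper half-plane; this is allowed by the definition of an essential open Jordan arc together with Remark \ref{Straighten:Remark} (no wild arcs). Under a diffeomorphism the $p$-Laplace equation transforms into a uniformly elliptic equation of the same structural type (the $p$-growth and $p$-ellipticity conditions are preserved, with constants depending on the diffeomorphism), so without loss of generality we may assume $u$ is (a weak solution of an equation of $p$-Laplace type) in a half-disk $D^+=\{(s,t): s^2+t^2<r^2,\ t>0\}$, continuous up to the flat part $L=\{(s,0):|s|<r\}$, with $u\equiv c$ on $L$ for a constant $c$ (here I write $c$ for the value called $t$ in the statement, to avoid clashing with the coordinate), and $u\neq c$ in $D^+$. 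After subtracting $c$ and, if necessary, replacing $u$ by $-u$, we may assume $u>0$ in $D^+$ and $u=0$ on $L$.

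The key step is then the boundary gradient estimate: a positive $p$-harmonic-type function in $D^+$ vanishing continuously on the flat piece $L$ has a nonvanishing inward normal derivative along $L$, quantitatively $|\nabla u|\geq c_0>0$ on a smaller half-disk. I would obtain this from two ingredients. First, the Carleson estimate / boundary Harnack principle for nonnegative solutions vanishing on a flat boundary piece (see Heinonen--Kilpel\"ainen--Martio \cite{Heinonenetal:DegenerateElliptic}, or the Lipschitz-domain boundary Harnack theory), which gives $u(x)\gtrsim u(x^\ast)\,\dist(x,L)/\dist(x^\ast,L)$ type control, hence a linear-from-below bound $u(s,t)\geq a\, t$ for $(s,t)$ in a half-disk of radius $r/2$, where $a>0$ depends on $u$ at one interior reference point (which is positive since $u\not\equiv 0$ and $u$ is continuous; strict positivity of $u$ in the interior follows from the strong maximum principle / Harnack inequality applied in $D^+$, as $u\geq 0$ and $u\not\equiv 0$). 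Second, the interior $C^{1,\alpha}$ estimate for $p$-harmonic functions \cite{Uralceva:C1alpha}, \cite{Evans:C1alpha}, \cite{Lewis:C1alpha}, rescaled: at a point $x=(s_0,t_0)$ with $t_0$ small, on the ball $B(x,t_0/2)\subset D^+$ we have $u\geq a\,t_0/2$ at $x$, $0\leq u$ throughout, and the oscillation of $u$ on $B(x,t_0/2)$ is controlled (again by boundary Harnack, $\osc_{B(x,t_0/2)}u\lesssim u(x)\asymp a t_0$); the scale-invariant gradient estimate $\|\nabla u\|_{L^\infty(B(x,t_0/4))}\leq C\,t_0^{-1}\osc_{B(x,t_0/2)}u$ combined with the lower bound $u(x)\geq a t_0/2$ forces $|\nabla u(x)|\geq c_0>0$ with $c_0$ independent of $t_0$. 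Finally, for points of $J$ itself (i.e.\ $t_0=0$), $|\nabla u|\geq c_0$ on $L\cap\{|s|<r/4\}$ follows by continuity of $\nabla u$ up to the flat boundary piece, which is itself part of the boundary $C^{1,\alpha}$ regularity for $p$-harmonic functions on smooth domains with smooth (here constant) boundary data. Unwinding the diffeomorphism transfers this to a neighborhood of $x_0$ in the original $\Omega$, and covering $J$ by such neighborhoods completes the proof.

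The main obstacle I anticipate is getting the matching two-sided control $u(s,t)\asymp t$ near $L$ with a clean quantitative dependence: the lower bound $u\gtrsim t$ and the upper bound $u\lesssim t$ (the latter also from boundary Harnack, or from a barrier argument using a suitable $p$-superharmonic comparison function on the half-disk) are exactly what makes the rescaled $C^{1,\alpha}$ estimate yield a uniform lower bound on $|\nabla u|$ rather than just a uniform upper bound. Equivalently, one must know that $u$ does not decay faster than linearly at $L$; this is where positivity of $u$ in the interior (strong maximum principle) and the boundary Harnack inequality for $p$-harmonic functions in Lipschitz/flat domains are genuinely used, and it is the only nonelementary input. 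Everything else — the straightening diffeomorphism, the structural invariance of the equation, the rescaling of the interior estimate — is routine.
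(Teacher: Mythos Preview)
Your overall strategy matches the paper's, which does not prove this from scratch but deduces it from two cited results: \cite[Theorem~2.8]{LewisNystrom:BoundaryRegularityPHarmonic}, giving the pointwise bound $|\nabla u(x)|\geq c\,|u(x)-t|/\dist(x,\partial\Omega)$ near $J$, and \cite[Corollary~6.2]{AikawaKilpelainenShanmugalingamZhong:BoundaryHarnack}, giving the Hopf-type lower bound $|u(x)-t|/\dist(x,\partial\Omega)\geq c'>0$. Your linear-from-below estimate $u(s,\tau)\geq a\tau$ via boundary Harnack is exactly the second of these.

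There is, however, a genuine gap in how you pass from this to a gradient lower bound. You write that the scale-invariant \emph{upper} bound $\|\nabla u\|_{L^\infty(B(x,t_0/4))}\leq C\,t_0^{-1}\osc_{B(x,t_0/2)}u$, combined with $u(x)\geq a t_0/2$, ``forces $|\nabla u(x)|\geq c_0$.'' It does not: an upper bound on $|\nabla u|$ together with a lower bound on $u$ cannot produce a lower bound on $|\nabla u|$---the inequality points the wrong way. Obtaining $|\nabla u|\gtrsim u/\dist(\cdot,\partial\Omega)$ at interior points is precisely the nontrivial content of the Lewis--Nystr\"om estimate, whose proof is a blow-up/compactness argument rather than a direct rescaling of the interior $C^{1,\alpha}$ bound. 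Your proposal does contain a correct alternative route that you underuse: invoke boundary $C^{1,\alpha}$ regularity (Lieberman) directly, so that $\nabla u$ extends continuously to $L$; then $u(s,\tau)\geq a\tau$ with $u(s,0)=0$ yields $\partial_\nu u\geq a>0$ on $L$ by passing to the limit in the difference quotient, and continuity of $\nabla u$ up to $L$ then gives the neighborhood bound. Lean on that path and drop the faulty interior rescaling.
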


This result follows from \cite[Theorem 2.8]{LewisNystrom:BoundaryRegularityPHarmonic} and \cite[Corollary 6.2]{AikawaKilpelainenShanmugalingamZhong:BoundaryHarnack}. The first result implies that each $x_0\in J$ has a neighborhood $B(x_0,r)$ such that $|\nabla u(x)| \geq c |u(x)-t|/\dist(x,\partial \Omega)$ for all $x\in B(x_0,r)\cap \Omega$, where $c>0$ is a constant depending on $p$ and on the geometry of $J$. The second cited result implies that $|u(x)-t|/ \dist(x,\partial \Omega) \geq c'$ for all $x$ in a possibly smaller ball $B(x_0,r')\cap \Omega$, where $c'>0$ is some constant depending on $u$. 
 
If $J\subset \partial \Omega$ is a $C^\infty$-smooth essential open Jordan arc, then a function $u\colon \Omega \to \R$ is said to be ($C^\infty$-)\textit{smooth up to $J$} if there exists an open set $U\supset J$ and $u$ extends to a $C^\infty$-smooth function on $U$. Note that this does \textit{not} require that $u$ is smooth in all of $\Omega$. If $u$ is also smooth in $\Omega$, then we write that $u$ is smooth in $\Omega\cup J$.

\begin{prop}[Boundary regularity]\label{BoundaryRegularity:Prop}
Suppose that $u$ is a $p$-harmonic function in $\Omega$. Moreover, suppose that $J\subset \partial \Omega$ is a $C^\infty$-smooth essential open Jordan arc and that $u$ is $C^\infty$-smooth when restricted to $J$. If each $x_0\in J$ has a neighborhood in $\Omega$ in which $|\nabla u|$ is bounded away from $0$, then $u$ is $C^{\infty}$-smooth up to the arc $J$.
\end{prop}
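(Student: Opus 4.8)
The plan is to localise along $J$, flatten the relevant piece of the boundary by a smooth diffeomorphism, and exploit the hypothesis $|\nabla u|\geq c>0$ to convert the degenerate $p$-Laplace equation into a genuinely uniformly elliptic one, after which linear Schauder theory (as in \cite{GilbargTrudinger:pde}) together with a bootstrap finishes the job. Since ``smooth up to $J$'' is a local assertion along $J$, it suffices to show that each $x_0\in J$ has a neighbourhood to which $u$ extends $C^\infty$-smoothly, and then patch. Fix $x_0\in J$. Because $J$ is a $C^\infty$-smooth \emph{essential} open Jordan arc, one may choose a connected neighbourhood $U$ of $x_0$ and a $C^\infty$ diffeomorphism $\Psi\colon U\to B:=B(0,\rho)$ with $\Psi(x_0)=0$, $\Psi(J\cap U)=B\cap\{y=0\}$ and $\Psi(\Omega\cap U)=B^+:=B\cap\{y>0\}$ (smoothness of $J$ straightens $J$, essentiality forces $\Omega$ onto one side). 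Then $v:=u\circ\Psi^{-1}$ is a weak solution in $B^+$ of $\textrm{div}\,\mathcal A(z,\nabla v)=0$, where $\mathcal A(z,\xi)$ is $C^\infty$ in $z\in B$ and in $\xi\in\R^2\setminus\{0\}$ and obeys, on each annular region $\{\lambda\leq|\xi|\leq\Lambda\}$ with $0<\lambda\leq\Lambda<\infty$, the ellipticity and boundedness bounds of a uniformly elliptic operator (with constants depending on $p,\lambda,\Lambda$ and $\Psi$); the boundary data $v|_{\{y=0\}}=(u|_J)\circ\Psi^{-1}$ is $C^\infty$ on $B\cap\{y=0\}$.

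Next I would show $v\in C^{1,\alpha}(\overline{B^+_{\rho/2}})$ for some $\alpha\in(0,1]$. This is the classical boundary $C^{1,\alpha}$ estimate for $\mathcal A$-harmonic functions on a flat boundary portion with $C^{1,\alpha}$ Dirichlet data (Lieberman-type boundary gradient bounds), and at this stage only the smoothness of $J$ and of $u|_J$ is used. Combining this regularity with the hypothesis yields a smaller half-ball $B^+_r$ and constants $0<c\leq C<\infty$ with $c\leq|\nabla v|\leq C$ on $\overline{B^+_r}$. Consequently $a_{ij}(z):=\partial_{\xi_j}\mathcal A_i(z,\nabla v(z))$ is well defined, \emph{uniformly} elliptic on $\overline{B^+_r}$, and — as $\mathcal A$ is smooth and $\nabla v\in C^{0,\alpha}$ — lies in $C^{0,\alpha}(\overline{B^+_r})$; moreover $v$ solves there a linear equation $a_{ij}(z)\,\partial_{ij}v=b(z)$ with $b\in C^{0,\alpha}$ arising from the $z$-dependence of $\mathcal A$, equivalently a divergence-form equation with $C^{0,\alpha}$ coefficients, and with $C^\infty$ data on the flat boundary.

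Finally I would run the bootstrap. Differentiating the equation in the tangential direction $x$ (rigorously via difference quotients), $\partial_x v$ solves a linear uniformly elliptic equation with $C^{0,\alpha}$ coefficients and boundary data $\partial_x(v|_{\{y=0\}})\in C^\infty$ on the flat part; interior and boundary Schauder estimates then give $\partial_x v\in C^{1,\alpha}(\overline{B^+_{r'}})$, so all second derivatives of $v$ except $\partial_{yy}v$ are $C^{0,\alpha}$ up to $\{y=0\}$. Since $a_{22}\geq\lambda>0$, the equation $a_{ij}\partial_{ij}v=b$ expresses $\partial_{yy}v$ through the remaining, now $C^{0,\alpha}$, quantities, whence $v\in C^{2,\alpha}(\overline{B^+_{r'}})$. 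But then $a_{ij}\in C^{1,\alpha}$ up to the boundary, and repeating the ``tangential Schauder, then solve the equation for the top normal derivative'' step gives $v\in C^{3,\alpha}$; an induction on the order yields $v\in C^\infty(\overline{B^+_{r''}})$. Transporting back by the smooth diffeomorphism $\Psi$ and using a one-sided smooth extension theorem produces a neighbourhood of $x_0$ on which $u$ extends $C^\infty$-smoothly, and a partition of unity along $J$ assembles these into the open set $U\supset J$ and the smooth extension of $u$ required by the definition.

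I expect the crux to be the first half of the second step: passing from the \emph{degenerate} quasilinear equation to a \emph{uniformly} elliptic one up to the boundary requires $\nabla u$ to be not merely bounded below near $J$ (the hypothesis) but also continuous, hence bounded, up to $J$ — that is, the boundary $C^{1,\alpha}$ estimate, which is the only genuinely nonlinear PDE input; everything after it is linear Schauder theory applied to the frozen operator. A secondary, routine but delicate point is that near a flat boundary only tangential differentiation of the equation is directly admissible, so at each inductive step the highest normal derivative must be recovered algebraically from the uniformly elliptic equation, and the coefficient regularity carefully tracked so as to feed the next step.
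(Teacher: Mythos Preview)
Your approach is correct and is essentially the same as the paper's: the paper simply records that the result follows from \cite[Theorem 6.19, p.~111]{GilbargTrudinger:pde}, since the hypothesis $|\nabla u|\geq c>0$ near $J$ renders the $p$-Laplace equation uniformly elliptic up to each compact subarc of $J$. Your write-up unpacks this one-line justification in full (boundary flattening, the Lieberman $C^{1,\alpha}$ step to launch the bootstrap, tangential differentiation with algebraic recovery of the top normal derivative), but the underlying idea is identical.
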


This follows from \cite[Theorem 6.19, p.~111]{GilbargTrudinger:pde}, since under the assumptions the $p$-harmonic equation is uniformly elliptic up to each compact subarc of the Jordan arc $J$.

Combining Propositions \ref{NonDegeneracy:Prop} and \ref{BoundaryRegularity:Prop} we have:

\begin{corollary}\label{NonDegeneracy:Corollary}
Suppose that $u$ is a $p$-harmonic function in $\Omega$. Moreover, suppose that $J\subset \partial \Omega$ is a $C^\infty$-smooth essential open Jordan arc on which $u$ extends continuously and is equal to a constant $t$, and suppose that $u\neq t$ in a neighborhood of $J$ in $\Omega$. Then $u$ is $C^\infty$-smooth up to the arc $J$ and each point of $J$ has a neighborhood in $\Omega\cup J$ in which $|\nabla u|$ is bounded away from $0$. 
\end{corollary}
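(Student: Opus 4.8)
The plan is to obtain this corollary as a direct consequence of Propositions \ref{NonDegeneracy:Prop} and \ref{BoundaryRegularity:Prop}, with only a short continuity argument needed at the end to upgrade the non-degeneracy statement from $\Omega$ to $\Omega\cup J$. The only thing one has to notice at the outset is that the boundary data on $J$ here is the constant $t$, so the restriction $u|J$ is trivially $C^\infty$-smooth; this is exactly the regularity of $u|J$ assumed in both propositions.

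First I would invoke Proposition \ref{NonDegeneracy:Prop}. Its hypotheses are satisfied verbatim: $u$ is $p$-harmonic in $\Omega$, $J\subset\partial\Omega$ is a $C^\infty$-smooth essential open Jordan arc, $u$ extends continuously to $\Omega\cup J$ and equals $t$ on $J$, and $u\neq t$ in a neighborhood of $J$ in $\Omega$. Hence every $x_0\in J$ has a neighborhood $B(x_0,r_{x_0})$ with $|\nabla u|\geq c_{x_0}>0$ on $B(x_0,r_{x_0})\cap\Omega$. This non-degeneracy, together with the smoothness of $u|J$ (constant $t$), is precisely the input for Proposition \ref{BoundaryRegularity:Prop}, which therefore yields that $u$ is $C^\infty$-smooth up to $J$: there is an open set $U\supset J$ on which $u$ (and hence $|\nabla u|$) extends to a $C^\infty$-smooth function.

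Finally, for the quantitative statement in $\Omega\cup J$: fix $x_0\in J$, and let $c=c_{x_0}>0$, $r=r_{x_0}$ be as above. Since $|\nabla u|$ is continuous on $U$ and $|\nabla u|\geq c$ on $B(x_0,r)\cap\Omega$, continuity gives $|\nabla u|\geq c$ on the closure of $B(x_0,r)\cap\Omega$ intersected with $U$. Because $J$ is an essential arc, near $x_0$ the set $\Omega\cup J$ looks like a closed half-plane with boundary $J$, so for $r'<r$ small enough $B(x_0,r')\cap(\Omega\cup J)$ is contained in that closure and in $U$; thus $|\nabla u|\geq c$ on the neighborhood $B(x_0,r')\cap(\Omega\cup J)$ of $x_0$ in $\Omega\cup J$, as claimed. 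The only ``obstacle'' is this last bookkeeping step — checking that the neighborhoods supplied by Proposition \ref{NonDegeneracy:Prop} (stated in $\Omega$) combine with the smooth extension of Proposition \ref{BoundaryRegularity:Prop} to produce neighborhoods in $\Omega\cup J$ — and it is routine once the essentiality of $J$ is used.
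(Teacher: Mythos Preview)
Your proposal is correct and follows exactly the approach the paper takes: the paper simply states that the corollary follows by ``combining Propositions \ref{NonDegeneracy:Prop} and \ref{BoundaryRegularity:Prop}'', and you have spelled out precisely this combination, together with the routine continuity step that passes the lower bound on $|\nabla u|$ from $\Omega$ to $\Omega\cup J$.
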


\section{Proof of the approximation Theorem}\label{Section:approximationTheorem}

In this section we prove Theorem \ref{Intro:Approximation:Theorem}. Let us start with an elementary lemma:

\begin{lemma}\label{EssentialArcs:Lemma}
Let $f\colon \Omega\to \R$ be a continuous function and suppose that $\{s_i\}_{i\in \Z}$ is a bi-infinite sequence of real numbers with $s_i<s_{i+1}$, $i\in \Z$, and $\lim_{i\to\pm\infty} s_i= \pm\infty$. If each of the level sets $f^{-1}(s_i)$, $i\in \Z$, is an embedded $1$-dimensional topological submanifold of $\R^2$, then
\begin{enumerate}[\upshape(i)]
\item $\bigcup_{i\in \Z} f^{-1}(s_i)$ is also an embedded $1$-dimensional topological submanifold, \vspace{.3em}
\item $\partial f^{-1}((s_j,s_{j+1}))\cap \Omega \subset f^{-1}(s_j) \cup f^{-1}(s_{j+1})$ and $f^{-1}(s_j)\subset\partial f^{-1}((s_{j-1},s_{j})) \cup \partial f^{-1}((s_j,s_{j+1})) $ for all $j\in \Z$, and \vspace{.3em}
\item if $x\in  f^{-1}(s_j)$ for some $j\in \Z$, then there exist disjoint Jordan regions $V_+, V_-$ such that the closure of $V_+\cup V_-$ contains a neighborhood of $x$ and there exists an open Jordan arc $J\subset \partial V_+\cap \partial V_- \cap  f^{-1}(s_j)$ containing $x$ such that $J$ is an essential boundary arc of $ V_+$ and $ V_-$. Moreover, each of $V_+,V_-$ is contained in a component of $  f^{-1}((s_{j-1},s_j))\cup  f^{-1}((s_j,s_{j+1}))$. 
\end{enumerate}
\end{lemma}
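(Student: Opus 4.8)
\textbf{Proof proposal for Lemma \ref{EssentialArcs:Lemma}.}

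The plan is to treat the three items in order, with (iii) being the substantive topological statement and (i), (ii) following quickly from local normal-form considerations. For (i), the point is that $M \coloneqq \bigcup_{i\in\Z} f^{-1}(s_i)$ is locally finite: since the $s_i$ go to $\pm\infty$, any relatively compact open set $U\subset\subset\Omega$ meets only finitely many $f^{-1}(s_i)$ (because $f$ is bounded on $\br U$), and distinct level sets $f^{-1}(s_i)$, $f^{-1}(s_j)$ are disjoint and each relatively closed in $\Omega$. So near any point $x\in M$, $x$ lies in exactly one $f^{-1}(s_i)$, and a small enough neighborhood of $x$ meets no other level set in the list; since $f^{-1}(s_i)$ is by hypothesis an embedded $1$-submanifold, $x$ has a neighborhood $U$ and a homeomorphism $\phi\colon U\to\R^2$ with $\phi(M\cap U) = \phi(f^{-1}(s_i)\cap U) = \R$. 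That is exactly the definition of an embedded $1$-submanifold. For (ii), the inclusion $\partial f^{-1}((s_j,s_{j+1}))\cap\Omega \subset f^{-1}(s_j)\cup f^{-1}(s_{j+1})$ is immediate from continuity of $f$: a boundary point of $f^{-1}((s_j,s_{j+1}))$ inside $\Omega$ is a limit of points where $s_j<f<s_{j+1}$, hence has $f$-value in $[s_j,s_{j+1}]$, and it is not itself in the open set, so its $f$-value is $s_j$ or $s_{j+1}$. For the reverse inclusion, take $x\in f^{-1}(s_j)$; using that $f^{-1}(s_j)$ is a $1$-submanifold, near $x$ the level set is a topological arc $\gamma$, and by planar topology $\gamma$ locally separates a small ball $B$ into two components $B_+,B_-$ (apply the no-wild-arcs Remark \ref{Straighten:Remark} to straighten $\gamma$ to a diameter). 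On neither side can $f$ be identically $s_j$ on an open subset — wait, actually we need to rule out $f>s_j$ on one side and $f>s_j$ on the other, etc.; here we simply argue that $f\neq s_j$ on $B_+\setminus\gamma$ would need checking, but in fact what is asserted is only that $x$ is approached by points of $f^{-1}((s_{j-1},s_j))$ or of $f^{-1}((s_j,s_{j+1}))$: on each side $B_\pm\setminus\gamma$, shrinking $B$ so that $s_{j-1}<f<s_{j+1}$ there, the side cannot consist only of points with $f=s_j$ (that would put an open set into $f^{-1}(s_j)$, contradicting that the level set is a $1$-manifold, i.e.\ has empty interior), so each side contains a point with $f\neq s_j$, hence with $f\in(s_{j-1},s_j)$ or $f\in(s_j,s_{j+1})$; thus $x\in\overline{f^{-1}((s_{j-1},s_j))}\cup\overline{f^{-1}((s_j,s_{j+1}))}$.

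The main work is (iii), and the natural approach is: fix $x\in f^{-1}(s_j)$ and use the submanifold structure to pass to a local coordinate. By the definition of embedded $1$-submanifold together with Remark \ref{Straighten:Remark}, there is an open set $U\ni x$ and a homeomorphism $\phi\colon U\to\R^2$ carrying $f^{-1}(s_j)\cap U$ onto $\R = \{(a,0)\}$; shrink $U$ (equivalently work inside a square $\phi^{-1}((-2,2)^2)$) so that $s_{j-1}<f<s_{j+1}$ on $\br U\cap\Omega$ and so that $\phi^{-1}((-1,1)^2)$ still contains $x$. Let $W_\pm$ be the two components of $U\setminus f^{-1}(s_j)$, namely the preimages under $\phi$ of the open upper/lower half-squares. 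On each $W_\pm$, $f\neq s_j$, and since $W_\pm$ is connected, $f$ has constant sign relative to $s_j$ there; so $f\in(s_{j-1},s_j)$ throughout one of them and the analysis of the other splits into cases. The set $J\coloneqq \phi^{-1}((-1,1)\times\{0\})$ is an open Jordan arc inside $f^{-1}(s_j)$ containing $x$, and by construction it is an essential boundary arc of the Jordan regions $V_+ \coloneqq \phi^{-1}((-1,1)\times(0,1))$ and $V_- \coloneqq \phi^{-1}((-1,1)\times(-1,0))$: the chart $\phi$ restricted to a neighborhood of $J$ is exactly the required homeomorphism to $\R^2$ sending $\Omega\cap(\cdot)$ to a half-plane, since $\Omega\supset\br U$ near $J$. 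These $V_\pm$ are disjoint, and $\br{V_+}\cup\br{V_-} = \phi^{-1}([-1,1]\times[-1,1]) \supset \phi^{-1}((-1,1)^2)$, which is a neighborhood of $x$. Finally, $V_+\subset W_+$ (say) and $f(W_+)\subset(s_{j-1},s_j)$ or $\subset(s_j,s_{j+1})$, hence $W_+$ lies in a single component of $f^{-1}((s_{j-1},s_j))\cup f^{-1}((s_j,s_{j+1}))$ (each of those two level-band sets is relatively open, they are disjoint, and $W_+$ is connected), so $V_+$ lies in one component of that union; similarly for $V_-$.

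I expect the main obstacle to be bookkeeping around the local chart: one must be careful that $\phi$ is only a homeomorphism (not smooth), so all separation statements — that $\R$ separates $\R^2$ locally, that the two sides of $J$ are the preimages of the two half-squares, that $J$ is \emph{essential} for $V_\pm$ as subsets of $\Omega$ rather than of $\R^2$ — have to be argued purely topologically, invoking Remark \ref{Straighten:Remark} (no wild arcs) to get the global straightening and hence the essential-arc property, and invoking that a $1$-submanifold of $\R^2$ has empty interior to rule out $f$ being locally constant equal to $s_j$ on an open set. A secondary subtlety is the phrase ``the closure of $V_+\cup V_-$ contains a neighborhood of $x$'': this is why one takes $V_\pm$ to be the half-squares over $(-1,1)^2$ while the chart domain is the larger $(-2,2)^2$, so that $\br{V_+}\cup\br{V_-}$ genuinely contains the open square $\phi^{-1}((-1,1)^2)\ni x$. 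No genuinely hard analysis is involved; it is all planar topology once the chart is fixed.
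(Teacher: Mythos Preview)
Your proposal is correct and follows essentially the same approach as the paper: local finiteness of the family $\{f^{-1}(s_i)\}$ for (i), continuity plus empty interior of a $1$-manifold for (ii), and a local chart straightening $f^{-1}(s_j)$ to $\R$ for (iii). The only notable differences are cosmetic: the paper's argument for the second inclusion in (ii) goes directly to ``$f^{-1}(s_j)$ has empty interior, so nearby points have $f\neq s_j$'' without the detour through local separation that you start and then abandon; and for the final clause of (iii) the paper uses (ii) to see that $x\in\partial\bigl(f^{-1}((s_{j-1},s_j))\cup f^{-1}((s_j,s_{j+1}))\bigr)$ and then argues that the connected sets $V_\pm$ must lie in components of that open set since they miss its boundary, whereas you argue more directly that each half-region $W_\pm$ is connected with $f\neq s_j$ and $s_{j-1}<f<s_{j+1}$, hence lies entirely in one band. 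Both routes are fine.
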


\begin{proof}
Let $x\in \bigcup_{i\in \Z} f^{-1}(s_i)$, so $x\in f^{-1}(s_j)$ for some $j\in \Z$. Then we claim that there exists a neighborhood $V$ of $x$ such that $V\cap \bigcup_{i\in \Z} f^{-1}(s_i)= V\cap f^{-1}(s_j)$. This claim trivially implies (i). Note that $x$ has positive distance from $f^{-1}(s_i)$, for all $i\neq j$ by the continuity of $f$. If the claim were not true, then one would be able to find a sequence $x_n\in f^{-1}(s_{i_n})$, where $i_n$ are distinct integers with $i_n\neq j$, $n\in \N$, such that $x_n\to x$ as $n\to \infty$. By continuity $f(x_n)\to f(x)$ so $s_{i_n}\to s_j$ as $n\to \infty$. This contradicts the assumptions on the sequence $\{s_i\}_{i\in \Z}$. 

Part (ii) follows by the continuity of $f$. Indeed, if $x_n \in f^{-1}((s_j,s_{j+1}))$ converges to $x \in \partial f^{-1}((s_j,s_{j+1}))\cap \Omega$, then $f(x_n)$ converges to $f(x)$. Note that $f(x)$ cannot lie in $(s_j,s_{j+1})$, otherwise $f(y)\in (s_j,s_{j+1})$ for all $y$ in a neighborhood of $x$ by continuity, so $x$ would not be a boundary point of $f^{-1}((s_j,s_{j+1}))$.  Therefore $f(x)=s_j$ or $f(x)=s_{j+1}$. For the second claim in (ii) note that $f^{-1}(s_j)$ has empty interior (as a subset of the plane), since it is a $1$-manifold. Therefore for each point $x\in f^{-1}(s_j)$ we can find a sequence of points $x_n$ converging to $x$ with $f(x_n)\neq s_j$, so $f(x_n)\in (s_{j-1},s_j)\cup (s_j,s_{j+1})$ for all sufficiently large $n$. The claim follows.

For (iii), let $x\in f^{-1}(s_j)$. By part (i), we conclude that there exists a neighborhood $V$ of $x$ that does not intersect $f^{-1}(s_i)$ for any $i\neq j$. Since $f^{-1}(s_j)$ is an embedded $1$-dimensional manifold, after shrinking $V$ if necessary, there exists a homeomorphism $\phi\colon V \to \R^2$ such that $\phi(f^{-1}(s_j)\cap V)=\R$; see Remark \ref{Straighten:Remark}. Consider small Jordan regions $ V_+ \subset \phi^{-1}(\R^2_+)$, $ V_- \subset \phi^{-1}(\R^2_-)$ such that the closure of $V_+\cup V_-$ contains a neighborhood of $x$. It follows that there exists an open Jordan arc $J\subset \partial V_+\cap \partial V_- \cap f^{-1}(s_j)$ containing $x$ such that $J$ is an essential boundary arc of $ V_+$ and $ V_-$. By (ii) we have that $x\in \partial f^{-1}((s_{j-1},s_{j}))\cup \partial f^{-1}((s_j,s_{j+1}))$. Hence, $V_+,V_-$ must intersect the open set $f^{-1}((s_{j-1},s_{j}))\cup f^{-1}((s_j,s_{j+1}))$. Since $V_+$ and $V_-$ are connected and they do not intersect the boundary of that set (by the choice of $V$), it follows that they are contained in connected components of $f^{-1}((s_{j-1},s_{j}))\cup f^{-1}((s_j,s_{j+1}))$.
\end{proof}

We let $u\colon \Omega\to \R $ be a continuous monotone function with $u\in W^{1,p}(\Omega)$, as in the statement of Theorem \ref{Intro:Approximation:Theorem}. In our proof we will follow the steps of \cite{IwaniecKovalevOnninen:W12approximation} and \cite{IwaniecKovalevOnninen:W1papproximation}. Namely, using Theorem \ref{Intro:MonotoneSobolev:Theorem}, we will first partition $\Omega$ into disjoint open subsets $\Upsilon_i$ and smooth the function $u$ in these subsets by replacing it with a $p$-harmonic function. This is the first step below. Next, we have to mollify the new function along the boundaries of $\Upsilon_i$. For this purpose, we partition even further $\Omega$, so that the boundaries of the regions $\Upsilon_i$  are contained in regions $\Psi_j$. Following \cite{IwaniecKovalevOnninen:W12approximation}, we call this new partition a \textit{lens-type partition}. In the regions $\Psi_j$ we consider another $p$-harmonic replacement. This completes the second step. In the third and final step we have to smooth our function along the boundaries of the regions $\Psi_j$, using smoothing results from Section \ref{Section:Smoothing}. Throughout all steps we have to ensure that our functions are monotone. This is guaranteed by the gluing lemmas from Subsection \ref{Section:glue} that allow us to paste together the various $p$-harmonic functions.

\subsection{Step 1: Approximation by a piecewise smooth function}

\subsubsection{Initial partition of $\Omega$}\label{InitialPartition:Section}
For fixed $\delta>0$ we consider a bi-infinite sequence of real numbers  $\{t_i\}_{i\in \Z}$ such that $t_i<t_{i+1}$, $|t_{i+1}-t_i|<\delta$ for $i\in \Z$, and $\lim_{i\to \pm\infty} t_i=\pm\infty$. Moreover, we may have that the conclusions of Theorem \ref{Sard:Theorem} are satisfied for the level sets $A_{t_i}$; that is, $A_{t_i}$ is an embedded $1$-dimensional topological submanifold of the plane. Note that $u^{-1}(t_i)$ or  $u^{-1}((t_i,t_{i+1}))$ will be empty if $t_i\notin u(\Omega)$ or $(t_i,t_{i+1})\cap u(\Omega)=\emptyset$, respectively.

\subsubsection{Solution of the Dirichlet Problem}\label{Dirichlet:Section}
In each region $ \Upsilon_i =u^{-1}((t_i,t_{i+1}))$, using Proposition \ref{DirichletProblem:Prop}, we solve the Dirichlet Problem with boundary data $u$ and obtain a function $u_i\in W^{1,p}(\Upsilon_i)$. The function $u_i$ is monotone in $\Upsilon_i$ since it is $p$-harmonic. By Lemma \ref{Comparison:Lemma}, we have $t_i\leq u_i\leq t_{i+1}$.

Moreover, if $x_0\in \partial \Upsilon_j \cap \Omega$, then $x_0 \in A_t$, where $t=t_j$ or $t=t_{j+1}$ by Lemma \ref{EssentialArcs:Lemma}(ii). Part (iii) of the same lemma implies that there exist disjoint Jordan regions $V_+, V_-$ such that the closure of $V_+\cup V_-$ contains a neighborhood of $x_0$ and there exists an open Jordan arc $J\subset \partial V_+\cap \partial V_-\cap A_t$ containing $x_0$ such that $J$ is an essential boundary arc of $ V_+$ and $ V_-$. Moreover, $V_+ \subset \Upsilon_j$ and $V_- \subset \Upsilon_i$ for some $i\in \Z$. By Lemma \ref{ContinuousExtension:Lemma} the functions $u_j,u_i$ extend continuously to $V_+\cup J$, $V_-\cup J$, respectively, and $u_j\big |J=t$, $u_i\big|J=t$. Since $\br{V_+\cup V_-}$ contains a neighborhood of $x$, we have that $u_j$ extends continuously to $x_0$. Hence, $u_j$ extends continuously to each point of $\partial \Upsilon_j\cap \Omega$ and agrees there with $u$.

Using Corollary \ref{Gluing:Corollary}, we ``glue" the functions $u_i$, $i\in \Z$, together with $u$, to obtain a continuous monotone function $u_\delta$ on $\Omega$. Note that 
$$u_\delta-u= \sum_{i\in \Z} [u_i-u]_0,$$
where $[u_i-u]_0 \in W^{1,p}_0(\Omega)$ denotes the extension of $u_i-u$ by $0$ outside $u^{-1}((t_i,t_{i+1}))$. The completeness of $W^{1,p}_0(\Omega)$ implies that $u_\delta-u\in W^{1,p}_0(\Omega)$. Therefore, by Proposition \ref{DirichletProblem:Prop} we conclude that 
$$E_p(u_\delta)\leq E_p(u)= \int_\Omega |\nabla u|^p.$$
If $u$ is not already $p$-harmonic, which we may assume, then the above inequality is strict by the uniqueness part of Proposition \ref{DirichletProblem:Prop}.

Since, $t_i\leq u_\delta\leq t_{i+1}$ in $ A_{t_i,t_{i+1}}$, we have
$$|u-u_\delta| \leq |u-t_i|+|t_i-u_\delta|\leq 2|t_{i+1}-t_i|<2\delta$$ 
in $\Omega$ and so $u_\delta$ is uniformly close to $u$. We also observe here that
\begin{align}\label{Dirichlet:vu}
u_\delta^{-1}((t_j,t_{j+1}))\subset A_{t_j,t_{j+1}}, \,\,\, j\in \Z,
\end{align}
which will be used later. This follows from the decomposition of $\Omega$ as
$$\Omega= \bigcup_{i\in \Z} (A_{t_i,t_{i+1}} \cup A_{t_i})$$
and the observation that $u_\delta^{-1}((t_i,t_{i+1}))$ cannot intersect $A_{t_j}$ for any $i,j\in \Z$, since $u_\delta=u=t_j$ on $A_{t_j}$.

Since $\Omega$ is bounded, it follows that $u_\delta \to u$ in $L^p(\Omega)$ as $\delta\to 0$. Moreover, $\|u_\delta\|_{W^{1,p}(\Omega)}= \|u_\delta\|_{L^p(\Omega)}+ \|\nabla u_\delta\|_{L^p(\Omega)}$ is uniformly bounded as $\delta\to 0$. By the Banach-Alaoglu theorem it follows that, along a sequence of $\delta\to 0$, $u_\delta$ converges weakly in $W^{1,p}(\Omega)$ to $u$, which is also the pointwise limit of $u_\delta$. Since the limits are unique, we do not need to pass to a subsequence. Hence, by Fatou's lemma for weak convergence 
\begin{align*}
\|\nabla u\|_{L^p(\Omega)}\leq \liminf_{\delta\to 0} \|\nabla u_\delta\|_{L^p(\Omega)} \leq \limsup_{\delta\to 0} \|\nabla u_\delta\|_{L^p(\Omega)}.
\end{align*}
The latter is bounded by $E_p(u)^{1/p}=\|\nabla u\|_{L^p(\Omega)}$, hence $\lim_{\delta\to 0}\|\nabla u_\delta\|_{L^p(\Omega)} = \|\nabla u\|_{L^p(\Omega)}$. This implies that $\nabla u_\delta\to \nabla u$ in $L^p(\Omega)$, due to the uniform convexity of $L^p$ spaces, when $1<p<\infty$; see for example \cite[pp.~95--97]{Brezis:Functional} and \cite[Proposition 3.32, p.~78]{Brezis:Functional}. 

We remark that $u_\delta$ might be constant in some component of a level region $u^{-1}((t_i,t_{i+1}))$. Summarizing, for each $\varepsilon>0$ there exists $\delta_0>0$ such that for $0<\delta<\delta_0$ there exists a monotone function $u_\delta$ on $\Omega$ satisfying the following:
\begin{enumerate}[(A)]
\item $u_\delta$ is $p$-harmonic in $u^{-1}((t_i,t_{i+1}))$, $i\in \Z$,
\item $\sup_{\Omega}|u_\delta-u|<\varepsilon$,
\item $\|\nabla u_\delta -\nabla u\|_{L^p(\Omega)}<\varepsilon$,
\item $E_p(u_\delta)< E_p(u)$, and
\item $u_\delta-u \in W^{1,p}_0(\Omega)$.
\end{enumerate}

In the next step, our goal is to approximate in the sense of (B)--(E) the function $v\coloneqq u_\delta$, for a small fixed $\delta$, by functions that are smooth along the level sets $A_{t_i}$, $i\in \Z$; note that these are the level sets of the original function $u$!

\subsection{Step 2: Smoothing along the level sets $u^{-1}(t_j)$}

\subsubsection{Lens-type partition}\label{Lens:Section}

Note that if a level set $v^{-1}(t)$ is a $1$-dimensional manifold, then it cannot intersect any component of $\bigcup_{i\in \Z}A_{t_i,t_{i+1}}$ where $v$ is constant. Since $v$ is $p$-harmonic in  $\bigcup_{i\in \Z}A_{t_i,t_{i+1}}$, it has at most countably many critical points in each component of $A_{t_i,t_{i+1}}$, unless it is constant there; see Section \ref{Section:Preliminaries}.  For $\eta>0$ and for each $j\in \Z$ we consider real numbers $t_j^-, t_j^+$ with $|t_j^+-t_j^-|<\eta$ such that $t_{j-1}^+<t_j^- <t_j<t_j^+<t_{j+1}^-$, $v^{-1}(t_j^{\pm})$ does not contain any critical points of $v$, and the conclusions of Theorem \ref{Sard:Theorem} hold for the level sets $v^{-1}(t_j^\pm)$. In particular, the level sets $v^{-1}(t_j^{\pm})$ intersect only components of $\bigcup_{i\in \Z}A_{t_i,t_{i+1}}$ in which $v$ is non-constant, and therefore the critical points of $v$ form a discrete set within these components. It follows that each point $x\in v^{-1}(t_j^\pm)$ has a neighborhood in  $\bigcup_{i\in \Z}A_{t_i,t_{i+1}}$ that does not contain any critical points of $v$.

In each region $\Psi_j\coloneqq v^{-1}( (t_{j}^-, t_j^+))$ we solve the Dirichlet Problem with boundary data $v$ as in Section \ref{Dirichlet:Section} and obtain $p$-harmonic functions $v_j$ with $t_{j}^-\leq v_j\leq t_j^+$ that extend continuously to $\partial \Psi_j\cap \Omega$ and agree there with $v$. We define a function $v_\eta$ to be equal to $v_j$ in $\Psi_j$, $j\in \Z$, and equal to $v$ on  $\Omega\setminus \bigcup_{j\in \Z} \Psi_j$. 

This procedure results in a continuous monotone function $v_\eta$, by Corollary \ref{Gluing:Corollary}, that  approaches $v$ in the uniform norm and also in $W^{1,p}(\Omega)$, as $\eta\to 0$. Moreover, we have  $E_p(v_\eta)< E_p(v)$ (unless $v$ is $p$-harmonic in $\Omega$, which we may assume that is not the case) and $v_\eta-v\in W^{1,p}_0(\Omega)$. That is, (B)--(E) from Section \ref{Dirichlet:Section} are satisfied.

\subsubsection{Regularity of the approximation}\label{Section:LipschitzRegularity}

As far as the regularity of $w\coloneqq v_\eta$ is concerned, we claim the following:
\begin{enumerate}[(a)]
\item The function $w$ is $p$-harmonic in $\Psi_j$ and in $v^{-1}((t_j^+,t_{j+1}^-))$, for all $j\in \Z$. Therefore, $w$ is $C^{1,\alpha}$-smooth in each of these sets and if $p=2$, it is actually $C^\infty$-smooth.
\item Each point $x_0\in v^{-1}(t_j^\pm)$, $j\in \Z$, has a neighborhood $V=V_+\cup V_-$ in 
$$\mathcal W\coloneqq \Omega \setminus \bigcup_{i\in \Z} v^{-1}(\{t_i^-,t_i^+\})= \bigcup_{i\in \Z} (\Psi_i \cup v^{-1}((t_i^+,t_{i+1}^-))),$$
where $V_+,V_-$ are disjoint Jordan regions contained in connected components of $\mathcal W$, $\br V$ contains a neighborhood of $x_0$, and there exists a $C^\infty$-smooth open Jordan arc $J\subset \partial V_+\cap \partial V_- \cap v^{-1}(t_j^{\pm})$ containing $x_0$ such that $J$ is an essential boundary arc of $V_+$ and $V_-$. Moreover, inside each one of $V_+, V_-$, $|\nabla w|$ is either vanishing or bounded below away from $0$; in the second case we have $w\neq t_{j}^\pm$ in the corresponding region. The function $w$ is smooth up to the boundary arc $J$ in each of $V_+,V_-$.
\item The function $w$ is $C^{\infty}$-smooth in $\mathcal W$, except possibly for a discrete set of points of $\mathcal W$ that accumulates at $\partial \Omega$.
\end{enumerate}

For (a) note that, by definition, $w$ is $p$-harmonic on $\Psi_j$, while on $v^{-1}((t_j^+ ,t_{j+1}^-)) \subset \Omega \setminus \bigcup_{i\in \Z} \Psi_i$ we have $w=v$ (by the definition of $w$) and $v$ is $p$-harmonic in $v^{-1}((t_j^+ ,t_{j+1}^-))=u_\delta^{-1}((t_j^+ ,t_{j+1}^-))\subset u^{-1}( (t_j,t_{j+1}))$; see \eqref{Dirichlet:vu}. Therefore, by the regularity of $p$-harmonic functions (see Section \ref{Section:Preliminaries}), $w$ is $C^{1,\alpha}$-smooth in the open set $\mathcal W$ and in fact $w$ is $C^\infty$-smooth in $\mathcal W$, except possibly for the (at most) countably many critical points that are contained in components $U$ of $\mathcal W$ in which $w$ is non-constant. The critical points form a discrete subset of such components $U$ of $\mathcal W$, so they could only accumulate in points of $\partial U$. 

Assuming that (b) is true, we will show (c). It suffices to show that the critical points contained in a component $U$ of $\mathcal W$ where $w$ is non-constant do not have any accumulation point in $\Omega$, but they can only accumulate at $\partial \Omega$. Suppose for the sake of contradiction that the critical points contained in $U$ accumulate at a point $x_0\in\partial U\cap \Omega$. Observe that the values $t_i^\pm$, $i\in \Z$, form a discrete set of real numbers with no finite accumulation points, and the level sets $v^{-1}(t_{i}^{\pm})$, $i\in \Z$, are embedded $1$-dimensional manifolds, as in the setting of Lemma \ref{EssentialArcs:Lemma}. By Lemma \ref{EssentialArcs:Lemma}(ii) there exists $j\in \Z$ such that $x_0 \in  v^{-1}(\{t_j^-, t_j^+\})$. Consider the Jordan regions $V_+,V_- \subset \mathcal W$ as in (b) such that $\br {V_+ \cup V_-}$ contains a neighborhood of $x_0$. This implies that there are infinitely many critical points of $w$ in, say, $V_+$, where $V_+\subset U$; note that at least one of $V_+,V_-$ has to be contained in $U$. Since $w$ is non-constant on $U\supset V_+$, the conclusion of (b) implies that $w$ does not have any critical points in $V_+$, a contradiction.

Finally we prove (b).   Let $x_0\in v^{-1}(t_j^-)$ for some $j\in \N$; the argument is the same if $x_0\in v^{-1}(t_j^+)$. By Lemma \ref{EssentialArcs:Lemma}(iii) we conclude that there exist disjoint Jordan regions $V_+,V_-$ such that the closure of $V_+\cup V_-$ contains a neighborhood of $x_0$ and there exists an open Jordan arc $J\subset \partial V_+\cup \partial V_- \cap v^{-1}(t_j^-) $ containing $x_0$ that is an essential boundary arc of $V_+$ and $V_-$. Moreover, the last statement of Lemma \ref{EssentialArcs:Lemma}(iii) implies that $V_+,V_-$ are contained in connected components of $ v^{-1}(( t_{j-1}^+,t_j^-))\cup v^{-1}((t_j^-,t_j^+))$. We will show that $|\nabla w|$ is vanishing or bounded below in each of $V_+,V_-$ (after possibly shrinking them) and that $w$ is smooth up to the arc $J$ in $V_+$ and $V_-$. We work with $V_+$.

Let $U$ be the component of $v^{-1}(( t_{j-1}^+,t_j^-))\cup v^{-1}((t_j^-,t_j^+))$ containing $V_+$. If $w$ is constant in $U$ then $w$ is obviously smooth up to the arc $J$ inside $V_+$ and we have nothing to show. We suppose that $w$ is non-constant in $U$.

Assume first that $V_+\subset U$ is contained in $v^{-1}((t_{j-1}^+,t_{j}^-))$. Then $w=v<t_j^-$ on $V_+$ by the definition of $w$. By the choice of $t_j^-$ in Section \ref{Lens:Section}, each point of $ v^{-1}(t_j^-)\supset J$ has a neighborhood in  $\bigcup_{i\in \Z}A_{t_i,t_{i+1}}$ that does not contain any critical points of $v$. In particular, each point of the arc $J$ has a neighborhood in which $v$ is non-constant, $p$-harmonic, and $|\nabla v|$ is non-zero. It follows that $v$ is smooth in a neighborhood of $J$; see Section \ref{Section:Preliminaries}. Therefore, after shrinking the Jordan region $V_+$ if necessary, $w$ is smooth in $V_+$ up to the arc $J$, and each point of $J$ has a neighborhood in $V_+$ in which $|\nabla w|=|\nabla v|$ is bounded away from $0$. 

Suppose now that $V_+\subset U$ is contained in $\Psi_j=v^{-1}((t_j^-,t_j^+))$. We have $J\subset v^{-1}(t_j^-)$, so $w=v=t_j^-$. Since $w$ is $p$-harmonic on $V_+\subset U$ and non-constant, we must have $w>t_j^-$ on $V_+$ by the strong maximum principle \cite[p.~111]{Heinonenetal:DegenerateElliptic}. Note that the arc $J$ is $C^\infty$-smooth, because $J\subset v^{-1}(t_j^-)$ and the $t_j^-$ is a regular (i.e., non-critical) value of $v$ by the choice of $t_{j}^{-}$. Now we are exactly in the setting of Corollary \ref{NonDegeneracy:Corollary}, which implies that each point of $J$ has a neighborhood in $V_+$ that does not contain any critical points of $w$ and $|\nabla w|$ is bounded away from $0$. If we shrink the Jordan region $V_+$, we may have that these hold in $V_+$. By Corollary \ref{NonDegeneracy:Corollary} it also follows that $w$ is smooth up to the boundary in the region $V_+$. The proof of (b) is completed.\qed

\vspace{0.5em}

We have managed to obtain a function $w$ that is smooth along the level sets $u^{-1}(t_j)$, but it is not necessarily smooth along $v^{-1}(t_j^{\pm})$. It has, however, some smoothness up to $v^{-1}(t_j^\pm)$, as described in (b). In the next subsection we prove that $w$ can be $C^\infty$-smoothed at arbitrarily small neighborhoods of the level sets $v^{-1}(t_{j}^{\pm})$ so as to complete the proof of Theorem \ref{Intro:Approximation:Theorem}. For this purpose we utilize smoothing results from Section \ref{Section:Smoothing}. 

\subsection{Step 3: Smoothing along the level sets $v^{-1}(t_j^{\pm})$}
We fix $j\in \Z$ and a component $J$ of $v^{-1}(t_j^-)$. Recall that $J$ is a smooth $1$-manifold, since $t_j^-$ was chosen to be a regular value of $v$, and $v$ is $C^\infty$-smooth in a neighborhood of $v^{-1}(t_j^-)$. There are two cases: either $J$ is homeomorphic to $\R$ or it is homeomorphic to $S^1$ (see the classification in Theorem \ref{Classification:theorem}).

By claim (b) from Subsection \ref{Section:LipschitzRegularity}, each $x_0\in J$ has a neighborhood $V=V_+\cup V_-$ in $\Omega\setminus v^{-1}(t_j^-)$,  where $V_+,V_-$ are disjoint Jordan regions, $\br V$ contains a neighborhood of $x_0$ and there exists an open arc $J_0\subset J$ containing $x_0$ that is contained in the common boundary of $V_+$ and $V_-$. Moreover, $w$ is $C^\infty$-smooth in ${ V_+}\cup J_0$ and in ${V_-}\cup J_0$, and $|\nabla w|$ is either vanishing or bounded below away from $0$ in each of $V_+, V_-$; in the second case we have $w\neq t_j^-$ in the corresponding set. 

Assume first that $J$ is homeomorphic to $\R$. By Theorem \ref{Classification:theorem}, there exists a neighborhood $U$ of $J$ in $\R^2$ and a diffeomorphism $\phi\colon \R^2\to U$ such that $\phi(\R)=J$. The previous paragraph implies that there exist connected neighborhoods $\widetilde B^+$ and $\widetilde B^-$ of $\R$ in $\{y>0\}$ and $\{y<0\}$, respectively, such that $w\circ \phi$ is smooth in $\widetilde B^+ \cup \R$ and in $\widetilde B^-\cup \R$, and each point of $\R$ has a neighborhood in $\widetilde B^+\cup\widetilde B^-$ in which $|\nabla (w\circ \phi)|$ is either vanishing or bounded away from $0$. Moreover, in each of $\widetilde B^+$ and $\widetilde B^-$ the function $w\circ \phi$ is either constant or we have $w\circ \phi > t_j^-$ or $w\circ \phi<t_j^-$ . 

Therefore, there exist disjoint connected open sets $B^+=\phi^{-1}(\widetilde B^+)$ and $B^-=\phi^{-1}(\widetilde B^-)$ so that $B=B^+\cup B^-\cup J$ is an open set containing $J$ and $w$ satisfies one of the following conditions, with the roles of $B^+$,$B^-$ possibly reversed:
\begin{enumerate}
\item[(i)] $w=t_j^-$ on $J$, $w>t_j^-$ on $B^+$, $w<t_j^-$ on $B^-$,
\item[(i$'$)] $w=t_j^-$ on $J$, $w>t_j^-$ on $B^+$, $w=t_j^-$ on $B^-$,
\item[(i$''$)] $w=t_j^-$ on $J$, $w>t_j^-$ on $B^+$, $w>t_j^-$ on $B^-$,
\item[(i$'''$)] $w=t_j^-$ on $J$, $w=t_j^-$ on $B^+$, $w=t_j^-$ on $B^-$.
\end{enumerate}
Note that (i$'''$) immediately implies that $w$ is smooth at each point of $J$. 

If (i) holds, then $|\nabla w|$ must be non-zero in $B^+\cup B^-$, hence each point of $J$ has a neighborhood in $B^+\cup B^-$ in which $|\nabla w|$ is bounded away from $0$. We are now exactly in the setting of the smoothing Lemma \ref{SmoothingArc:Lemma}. Thus, there exists a monotone function $\widetilde w$ in $\Omega$ that is smooth in $B$, agrees with $w$ outside an arbitrarily small neighborhood $A$ of $J$, and is arbitrarily close to $w$ in the uniform and in the Sobolev norm. In particular, since $E_p(w)<E_p(u)$ (cf.\ (D) from Subsection \ref{Dirichlet:Section}) we may have $E_p(\widetilde w) < E_p(u)$. Finally, we have $\widetilde w -w \in W^{1,p}_0(\Omega)$ from Lemma \ref{SmoothingArc:Lemma}(d) as claimed in (E) in the statement of Theorem \ref{Intro:Approximation:Theorem}. If (i$'$) or (i$''$) holds instead, then we are in the setting of Lemma \ref{SmoothingArc:Lemma2} and obtain the same conclusions. 

Now, if $J$ is homeomorphic to $S^1$ we can find connected open sets $B^+$, $B^-$ so that $J$ is their common boundary. In this case, we have the following alternatives:
\begin{enumerate}
\item[(i)] $w=t_j^-$ on $J$, $w>t_j^-$ on $B^+$, $w<t_j^-$ on $B^-$,
\item[(i$'$)] $w=t_j^-$ on $J$, $w>t_j^-$ on $B^+$, $w=t_j^-$ on $B^-$,
\item[(i$'''$)] $w=t_j^-$ on $J$, $w=t_j^-$ on $B^+$, $w=t_j^-$ on $B^-$.
\end{enumerate}
We note that the alternative (i$''$) of the previous case does not occur here, since it would violate the monotonicity of $w$. We can now apply Lemma \ref{SmoothingCurve:Lemma} to smooth the function $w$ near $J$.

We apply the above smoothing process countably many times in pairwise disjoint, arbitrarily small neighborhoods of components of $v^{-1}(t_j^{\pm})$, $j\in \Z$. Since these components do not accumulate in $\Omega$ (see Lemma \ref{EssentialArcs:Lemma}(i)), the resulting limiting function  $\widetilde w$ will be $C^\infty$-smooth in $\Omega$, except possibly at a discrete set of points of $\Omega$ (by (a),(c) in Subsection \ref{Section:LipschitzRegularity}), in which $\widetilde w$ is $C^{1,\alpha}$-smooth; if $p=2$ then $\widetilde w$ is $C^\infty$-smooth everywhere. If the neighborhoods of $v^{-1}(t_j^{\pm})$, $j\in \Z$, where the smoothing process takes place are arbitrarily small, then $\widetilde w$ will be $p$-harmonic on a subset of $\Omega$ having measure arbitrarily close to full.  Moreover, by Lemma \ref{ConvergenceMonotonicity:Lemma}, $\widetilde w$ is monotone in $\Omega$. Finally, it satisfies the corresponding conditions (B)--(E) from Subsection \ref{Dirichlet:Section}. This completes the proof of Theorem \ref{Intro:Approximation:Theorem}. \qed

\section{Smoothing results}\label{Section:Smoothing}

\begin{theorem}\label{Classification:theorem}
Let $J$ be an embedded $1$-dimensional smooth submanifold of $\R^2$ that is connected. Then $J$ is diffeomorphic to $X$, where $X=\R\times \{0\}\subset \R^2$ or $X=S^1\subset \R^2$. Moreover, there exists a neighborhood $U$ of $J$ in $\R^2$ and a diffeomorphism $\phi\colon \R^2\to U$ such that $\phi(X)=J$.
\end{theorem}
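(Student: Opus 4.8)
The plan is to prove the two assertions separately, since although the first (the intrinsic classification) is familiar, the ``ambient'' statement needs genuinely different arguments in the two cases. For the classification, that a connected smooth $1$-manifold is diffeomorphic to $\R\times\{0\}$ or to $S^1$ is the classical classification of $1$-manifolds, which I would invoke directly; for the record, the standard proof equips $J$ with the Riemannian metric inherited from $\R^2$ and considers a maximal arc-length-parametrized smooth curve $\gamma\colon I\to J$ on an open interval $I\subseteq\R$. Connectedness of $J$ together with maximality of $I$ forces $\gamma$ to be surjective, and then either $\gamma$ is injective, so that it is a diffeomorphism of $I\cong\R$ onto $J$, or $\gamma$ revisits a point, in which case it is periodic and descends to a diffeomorphism $\R/(L\Z)\cong S^1\to J$; in the latter case $J$ is compact.

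\emph{The case $J\cong S^1$.} Here $J$ is compact, hence closed in $\R^2$, and by the classification it is a smoothly embedded Jordan curve. By the smooth Schoenflies theorem (cf.\ Remark \ref{Straighten:Remark}) there is a diffeomorphism $\Phi\colon\R^2\to\R^2$ with $\Phi(S^1)=J$; taking $U=\R^2$ and $\phi=\Phi$ settles this case. It is worth noting that one \emph{cannot} instead take a thin tubular neighborhood of $J$ for $U$: such a neighborhood is an open annulus, which is not diffeomorphic to $\R^2$, so Schoenflies (equivalently, a simply connected $U$) is really needed here.

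\emph{The case $J\cong\R$.} Now $J$ need not be closed in $\R^2$ — it could, for instance, be a smooth ray spiralling toward a point or toward a circle — so in general there is no diffeomorphism of $\R^2$ onto itself straightening $J$, and one must allow $U\subsetneq\R^2$. The approach is to build a tubular neighborhood explicitly: fix a parametrizing diffeomorphism $\gamma\colon\R\to J$, let $\mathbf n(s)$ be the unit normal obtained by rotating $\gamma'(s)/|\gamma'(s)|$ by $\pi/2$ (a smooth choice, using that $\R^2$ is oriented), and set $\phi(s,t)=\gamma(s)+\rho(s)\,t\,\mathbf n(s)$ for a smooth positive function $\rho\colon\R\to(0,\infty)$. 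For $\rho$ pointwise small, $\phi$ is a local diffeomorphism by the inverse function theorem, and $\phi(\R\times\{0\})=J$; choosing $\rho$ small enough — in general not constant — makes $\phi$ a global diffeomorphism onto an open neighborhood $U$ of $J$. Invariantly: $J$ has a tubular neighborhood $U$ diffeomorphic to the total space of its normal bundle, which is a line bundle over the contractible base $J\cong\R$, hence trivial, hence diffeomorphic to $\R\times\R$ with $J$ corresponding to $\R\times\{0\}$.

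\emph{Main obstacle.} Everything local is routine; the one point demanding care is the \emph{global} injectivity of $\phi$ in the case $J\cong\R$ when $J$ is not a closed subset of $\R^2$. Distinct slices $\{s\}\times[-1,1]$ of the tube must not collide, and where $J$ accumulates on itself (or packs tightly) the width $\rho(s)$ is forced to decay, so the version of the tubular neighborhood theorem valid for compact, or properly embedded, submanifolds does not suffice and one needs the general version with a variable tube radius. This is standard, and I would cite it (for instance the tubular neighborhood theorem as in Hirsch, \emph{Differential Topology}, \S4.5) rather than reprove it.
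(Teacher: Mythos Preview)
Your proposal is correct and in fact more careful than the paper's own treatment: the paper does not give a proof but simply refers the reader to \cite{Viro:1manifolds} for the classification of $1$-manifolds and to the tubular neighborhood theorem in \cite{Lee:manifolds} for the second assertion. Your argument is in the same spirit but sharper on one point. You correctly observe that in the $S^1$ case a tubular neighborhood of $J$ is an annulus, hence not diffeomorphic to $\R^2$, so the tubular neighborhood theorem alone does not produce the stated $\phi\colon\R^2\to U$; one needs the smooth Schoenflies theorem (and then $U=\R^2$). The paper's bare citation to tubular neighborhoods glosses over this distinction, though in its sole application to the $S^1$ case (Lemma~\ref{SmoothingCurve:Lemma}) only an annular neighborhood is actually used, so the imprecision is harmless in context. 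Your treatment of the $\R$ case via a variable-width tube, with the explicit caution about arcs that are not properly embedded, is exactly the content behind the paper's citation and matches what the paper later needs in Lemma~\ref{SmoothingArc:Lemma}.
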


We direct the reader to \cite{Viro:1manifolds} for an account on the classification of $1$-manifolds. The last statement of the lemma can be proved in the same way as the existence of tubular neighborhoods of embedded submanifolds of $\R^n$; see for example \cite[Theorem 6.24, p.~139]{Lee:manifolds}.

\begin{lemma}[Smoothing along a line] \label{SmoothingArc:Lemma}
Let $\Omega\subset \R^2$ be an open set and let $J \subset \Omega$ be an embedded $1$-dimensional smooth submanifold of $\R^2$, homeomorphic to $\R$, such that $J$ has no accumulation points in $\Omega$; that is, the topological ends of $J$ lie in $\partial \Omega$. Suppose that $J$ is contained in an open set $B\subset \Omega$ that it is the common boundary of two disjoint regions $B^+$ and $B^-$ with $B=B^+\cup B^-\cup J$. Moreover, let $u\in W^{1,p}(\Omega)$ be a monotone function with the following properties:
\begin{enumerate}[\upshape(i)]
\item \label{SmoothingArc:Lemma:sign} $u=0$ on $J$, $u>0$ on $B^+$, $u<0$ on $B^-$,
\item \label{SmoothingArc:Lemma:smooth} $u$ is smooth in $B^+\cup J$ and in $B^-\cup J$,
\item \label{SmoothingArc:Lemma:nabla} each point of $J$ has a neighborhood in $B^+\cup B^-$ in which $|\nabla u|$ is bounded away from $0$.
\end{enumerate} 
Then for any open set $U\subset B$ with $U\supset J$ and each $\varepsilon>0$ there exists an open set $A\subset U$ containing $J$ and a monotone function $\widetilde u$ in $\Omega$ such that
\begin{enumerate}[\upshape(a)]
\item $\widetilde u$ agrees with $u$ in $\Omega\setminus A$ and is smooth in $B$,
\item $|\widetilde u- u|<\varepsilon$ in $A$, 
\item $\|\nabla \widetilde u-\nabla u\|_{L^p(A)}<\varepsilon$, and
\item $\widetilde u-u\in W^{1,p}_{0}(A)$.
\end{enumerate} 
\end{lemma}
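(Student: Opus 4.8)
The plan is to smooth $u$ inside a thin tubular neighbourhood of $J$. The idea is to first perform a diffeomorphic change of variables that straightens $u$ on one side of $J$, turning the problem into a one‑parameter family of one‑dimensional corner‑smoothing problems, and then to round off each corner in a way that keeps the gradient from vanishing; the non‑vanishing of the gradient is exactly what preserves monotonicity.

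\emph{Normalisation and straightening.} Since $J$ is a connected smooth $1$-submanifold homeomorphic to $\R$, Theorem~\ref{Classification:theorem} furnishes a diffeomorphism $\phi\colon\R^2\to U_0$ onto an open neighbourhood $U_0\subset\Omega$ of $J$ with $\phi(\R\times\{0\})=J$. Replacing $u$ by $u\circ\phi$ and, if needed, precomposing with $(x,y)\mapsto(x,-y)$, I may assume $J=\R\times\{0\}$ and that on a tube around $J$ contained in $B$ the half-spaces $\{y>0\}$ and $\{y<0\}$ lie in $B^+$ and $B^-$ respectively. All hypotheses are diffeomorphism-invariant, and since the modification below is supported in a thin subtube, the possible unboundedness of $D\phi$ far from $J$ is harmless. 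By (ii) the restrictions $u|_{B^\pm}$ extend to $C^\infty$ functions $u^\pm$ near $J$; as $u^\pm\equiv0$ on $\{y=0\}$ we have $\partial_x u^\pm=0$ there, so (i) and (iii) force $\partial_y u^+>0$ and $\partial_y u^->0$ on $J$, hence on a tube around $J$. On this tube $\Phi(x,y)=(x,u^+(x,y))$ is a $C^\infty$ diffeomorphism fixing $J$ and sending $\{y>0\}$ to $\{v>0\}$; in the coordinates $(x,v)=\Phi(x,y)$ the function $u$ reads $F(x,v)=v$ for $v\ge0$ and $F(x,v)=P(x,v):=u^-\bigl(x,(u^+)^{-1}_x(v)\bigr)$ for $v\le0$, where $P$ is $C^\infty$ up to $v=0$, $P(x,0)=0$ and $\partial_v P=\partial_y u^-/\partial_y u^+>0$. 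Thus, up to this change of variables, I only have to round, for each $x$, the corner at $v=0$ of a function that is strictly increasing on each side and continuous across $0$.

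\emph{Rounding the corner.} Fix $\chi\in C^\infty(\R;[0,1])$ with $\chi\equiv0$ on $(-\infty,-\tfrac12]$, $\chi\equiv1$ on $[\tfrac12,\infty)$ and $\chi(s)+\chi(-s)=1$, and fix $\beta\in C_c^\infty\bigl((-\tfrac12,\tfrac12)\bigr)$ with $\int\beta=1$. For a smooth positive function $\rho=\rho(x)$, to be chosen small, set
\[
\gamma_0(x,v)=\bigl(1-\chi(v/\rho)\bigr)\partial_v P(x,v)+\chi(v/\rho),\qquad
D(x)=\rho-P(x,-\rho)-\int_{-\rho}^{\rho}\gamma_0(x,s)\,ds,
\]
\[
\gamma(x,v)=\gamma_0(x,v)+\frac{D(x)}{\rho}\,\beta(v/\rho),\qquad
Q(x,v)=\begin{cases}P(x,v),&v\le-\rho,\\[2pt]P(x,-\rho)+\int_{-\rho}^{v}\gamma(x,s)\,ds,&|v|<\rho,\\[2pt]v,&v\ge\rho.\end{cases}
\]
Because $\gamma_0\equiv\partial_v P$ for $v\le-\rho/2$, $\gamma_0\equiv1$ for $v\ge\rho/2$, and the correction is supported in $|v|<\rho/2$, the three branches of $Q$ patch to a function that is $C^\infty$ in $(x,v)$ and equals $F$ for $|v|\ge\rho/2$ (the definition of $D$ is exactly what forces $Q(x,\rho)=\rho$, hence the matching of all jets at $v=\pm\rho$). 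The crucial point is that $D(x)=O(\rho^2)$: expanding $P(x,v)=v\,\partial_v P(x,0)+O(v^2)$ and $\partial_v P(x,v)=\partial_v P(x,0)+O(v)$ and using $\chi(s)+\chi(-s)=1$, both $\rho-P(x,-\rho)$ and $\int_{-\rho}^{\rho}\gamma_0$ equal $\rho\bigl(1+\partial_v P(x,0)\bigr)+O(\rho^2)$, so the leading $O(\rho)$ terms cancel. Consequently the correction $\tfrac{D}{\rho}\beta(v/\rho)$ is $O(\rho)$, and for $\rho$ small enough $\gamma>0$ on all of $\{|v|<\rho\}$, i.e.\ $\partial_v Q>0$ there.

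\emph{Transfer and conclusion.} Let $\widetilde u$ equal $u$ off $U_0$ and, on $U_0$, equal $Q$ read back through $\Phi$ and $\phi$; the two prescriptions agree where $|v|\ge\rho/2$, so $\widetilde u$ is well defined, lies in $W^{1,p}(\Omega)$, and equals $u$ outside the set $A$, the image of $\{|v|<\rho(x)\}$. Choosing $\rho$ small, and decaying rapidly toward the ends of $J$, we arrange $A\subset U$, finiteness of the integrals appearing below, and $\overline V\cap\Omega\subset A$ for the thinner subtube $V$ used momentarily. The function $\widetilde u$ is $C^\infty$ on $B$, being $C^\infty$ on $A\supset J$ by construction and equal to the $C^\infty$ function $u$ on $B\setminus J$; this is (a). Moreover $\nabla\widetilde u\ne0$ throughout $A$, since in the $(x,v)$-chart the $v$-component of the gradient of $\widetilde u$ is $\partial_y u^+\cdot\partial_v Q>0$; hence $\widetilde u$ has no local extremum in $A$, i.e.\ $\widetilde u$ is strictly monotone in $A$. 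Since $\widetilde u=u$ outside the thinner subtube $V$, $\widetilde u$ is monotone on $\Omega\setminus\overline V$, and Lemma~\ref{GluingMonotonicity:Lemma} then gives that $\widetilde u$ is monotone in $\Omega$. Finally, on $A$ one has $|\widetilde u-u|\le\bigl(1+\sup_{A}\partial_v P\bigr)\rho(x)$ and $|\nabla\widetilde u-\nabla u|\le H(x)$ for a locally bounded $H$ (the $O(\rho^2)$ estimate controls the $v$-derivative, and $\rho$ is chosen with $\rho'$ small to control the $x$-derivative); taking the tube thin enough, with $|A|$ small and $\rho$ decaying at the ends of $J$, yields (b) and (c), and $\widetilde u-u$, being a bounded $W^{1,p}(\Omega)$ function supported well inside the tube $A$, lies in $W^{1,p}_0(A)$, which is (d).

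\emph{Main obstacle.} The binding constraint is monotonicity: a naive smoothing of $u$ across $J$ typically creates local maxima or minima near $J$. The straightening change of variables reduces the task to rounding the corner of increasing functions of one variable, and the key quantitative point is that the value-matching correction $D(x)$ is only \emph{second} order in the tube width $\rho$, so it cannot reverse the sign of $\partial_v Q$. This keeps $\nabla\widetilde u$ non-vanishing, which both rules out new extrema and puts us in position to apply the gluing Lemma~\ref{GluingMonotonicity:Lemma}.
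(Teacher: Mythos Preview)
Your argument is correct and lands on the same endgame as the paper: a smooth modification with non-vanishing normal derivative in a thin tube, hence strict monotonicity there, followed by Lemma~\ref{GluingMonotonicity:Lemma}. The construction of the modification, however, is genuinely different. The paper does not introduce the second change of variables $\Phi(x,y)=(x,u^+(x,y))$; instead it works directly in the $(x,y)$-coordinates and simply interpolates
\[
\widetilde u(x,y)=\alpha(|y|/\beta(x))\,u(x,y)+\bigl(1-\alpha(|y|/\beta(x))\bigr)\,y\gamma(x),
\]
where $\gamma(x)$ is a smooth lower bound for $u_y$ near $J$. Because $u_y>\gamma$, one gets $(u-y\gamma)/y\ge 0$ and hence $\widetilde u_y\ge\gamma>0$ by a two-line computation, with no value-matching correction needed. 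Your route---straightening one side, then rounding a one-parameter family of corners with a mass-correcting bump and the $D(x)=O(\rho^2)$ cancellation---is more elaborate but makes the ``corner'' structure explicit and reduces the monotonicity check to a one-variable statement. The paper's interpolation is shorter and avoids both the second diffeomorphism and the cancellation estimate; your approach buys a cleaner conceptual picture at the cost of more bookkeeping (notably the $x$-derivatives coming from $\rho(x)$ and $D(x)$, which you handle only in passing). Either way, the essential mechanism---keep the normal derivative strictly positive so that Lemma~\ref{GluingMonotonicity:Lemma} applies---is the same.
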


\begin{proof}
By using a diffeomorphism $\phi$ defined in a neighborhood of $J$ in $B$, given by Theorem \ref{Classification:theorem}, we may straighten $J$ and assume that $J=\R=\{(x,y):y=0\}$, $B=\R^2$, $B^+=\{(x,y):y>0\}$, $B^-=\{(x,y):y<0\}$. For each $\varepsilon>0$ we will construct a smooth function $\widetilde u$ that agrees with $u$ outside an arbitrarily small neighborhood $A$ of $\R$, $|\widetilde u-u|<\varepsilon$ in $A$ and $\|\nabla \widetilde u -\nabla u\|_{L^q(A)}<\varepsilon$, where $q>p$ is a fixed exponent (e.g., $q=2p$). If the neighborhood $A$ is sufficiently thin, then $\widetilde u-u\in W^{1,q}_0(A)$. Pulling back $\widetilde u$ under the diffeomorphism $\phi$ and using H\"older's inequality will yield the desired conclusions in $\Omega$. We argue carefully for the monotonicity of $\widetilde u$ in the end of the proof. 

For the construction of $\widetilde u$, essentially, we are going to interpolate between the function $u$ away from $\R$ and the ``height function" $(x,y)\mapsto y$ near $\R$. Several technicalities arise though, in order to ensure that the new function is monotone.

By assumption \ref{SmoothingArc:Lemma:nabla}, for each $x\in \R$ there exists a constant $m>0$ such that $|\nabla u |>m$ in a neighborhood of $x$ in $\R^2\setminus \R$. Since $u=0$ on $\R$, we have $u_x=0$ on $\R$. Hence, using the smoothness of $u$ in $B^+\cup \R$ and in $B^-\cup \R$ we conclude that for each point $x\in \R$ there exist constants $m,M>0$ such that $M>|u_y|>m$ in a neighborhood of $x$ in $\R^2\setminus \R$. Thus, we may consider positive smooth functions $\gamma,\delta \colon \R \to \R$ such that $\delta(x)>|u_y(x,y)|>\gamma(x)>0$ for all $(x,y)$ lying in a small neighborhood of $\R$ in $\R^2\setminus \R$. By assumptions \ref{SmoothingArc:Lemma:sign} and \ref{SmoothingArc:Lemma:smooth}, we have $u_y\geq 0$ in a neighborhood of $\R$ in $\R^2\setminus \R$. Therefore, 
\begin{align}\label{SmoothingArc:Lemma:gammadelta}
\delta(x)>u_y(x,y)>\gamma(x)>0
\end{align} for all $(x,y)$ lying in a neighborhood of $\R$ in $\R^2\setminus \R$. By choosing a possibly larger $\delta$ we may also have
\begin{align}\label{SmoothingArc:Lemma:gammadelta2}
\delta(x)>|u_x(x,y)|
\end{align}
in that neighborhood. We let $A$ be a sufficiently small open neighborhood of $\R$ in $\R^2$ so that the preceding inequalities hold. Later we will shrink $A$ even further. 

For a positive smooth function $\beta\colon \R\to \R$ we define $V(\beta)=\{(x,y): |y|<\beta(x)\}$. Note that we may choose $\beta$ so that $\br{V(\beta)}\subset A$. By scaling $\beta$, we may also have that 
\begin{align}\label{SmoothingArc:Lemma:beta}
|\beta'|\leq 1.
\end{align}
Moreover, we consider a non-negative smooth function $\alpha\colon \R\to \R$ with $\alpha(t)=1$ for $t\geq 1$, $\alpha(t)=0$ for $t\leq 1/2$, and 
\begin{align}\label{SmoothingArc:Lemma:alpha}
0\leq \alpha'\leq 4.
\end{align}

We now define $s=s(x,y)= |y|/\beta(x)$ and 
\begin{align*}
\widetilde u(x,y)= \alpha(s)u(x,y)+ (1-\alpha(s))y\gamma(x).
\end{align*}
This function is smooth, agrees with $u$ outside $V(\beta)$ (where $s\geq 1$), and agrees with $(x,y)\mapsto y\gamma(x)$ in $V(\beta/2)$ (where $0\leq s<1/2$). We have
$|\widetilde u-u| \leq |1-\alpha(s)| |u-y\gamma|\leq |u|+ |y|\gamma$. By \eqref{SmoothingArc:Lemma:gammadelta} in $A$ we have 
\begin{align}\label{SmoothingArc:Lemma:u bound}
|u(x,y)| = \left| \int_0^y u_y(x,t)dt \right| \leq |y|\delta(x).
\end{align} 
Therefore, $|u|+|y|\gamma < |y|(\gamma+\delta)$.
If $A$ is so small that it is contained in the open set $\{(x,y): |y|<\varepsilon/(\gamma(x)+\delta(x))\}$, then we have $|\widetilde u-u|<\varepsilon$ in $A$, which proves claim (b).

We now compute the derivatives:
\begin{align*}
\widetilde u_x&= -\alpha'(s) \beta'(x) s^2 \frac{u-y\gamma}{|y|} +\alpha(s)u_x + (1-\alpha(s)) y\gamma' \,\,\,\,\, \textrm{and}\\
\widetilde u_y&= \alpha'(s)s \frac{u-y\gamma}{y}+ \alpha(s)u_y+(1-\alpha(s))\gamma.
\end{align*}
By \eqref{SmoothingArc:Lemma:beta}, \eqref{SmoothingArc:Lemma:alpha},  and \eqref{SmoothingArc:Lemma:u bound} we have 
\begin{align*}
|\widetilde u_x| &\leq 4(\gamma+\delta)+\delta +|y||\gamma'| \,\,\,\,\, \textrm{and}\\
|\widetilde u_y| & \leq 4(\gamma+\delta)+ \delta+ \gamma
\end{align*}
in $V(\beta)\subset A$. Since the above bounds and \eqref{SmoothingArc:Lemma:gammadelta}, \eqref{SmoothingArc:Lemma:gammadelta2} do not depend on $A$ but only on the function $u$, if $A$ is sufficiently small, then $\|\nabla \widetilde u- \nabla u\|_{L^q(A)}\leq  \|\nabla \widetilde u\|_{L^q(V(\beta))}+ \| \nabla u\|_{L^q(V(\beta))}$ can be made as small as we wish. 

We next prove that $\widetilde u$ is monotone in $\R^2$. Note that $\frac{u-y\gamma}{y} \geq 0$ on $A$, since $u_y>\gamma$. Therefore, 
\begin{align*}
\widetilde u_y \geq \gamma>0.
\end{align*}
This implies that $\nabla \widetilde u \neq 0$ in $A$, so $\widetilde u$ does not have any local maximum or minimum in $A$, and it is strictly monotone there. On the other hand, $\widetilde u =u$ outside $\br{V(\beta)}\subset A$, and therefore $\widetilde u$ is monotone outside $V(\beta)$. By the Gluing Lemma \ref{GluingMonotonicity:Lemma} it follows that $\widetilde u$ is monotone in $\R^2$. 

The argument of the previous paragraph and the use of Lemma \ref{GluingMonotonicity:Lemma} can be carried in $\Omega$, after precomposing $\widetilde u$ with the straightening diffeomorphism $\phi^{-1}$. We denote the composition, for simplicity, by $\widetilde u$. So $\widetilde u$ is a function in $\Omega$ that is strictly monotone in $\phi^{-1}(A)$ and can be extended to agree with $u$ in $\Omega\setminus \br{\phi^{-1}(V(\beta))}$; in particular, $\widetilde u$ is monotone in $\Omega\setminus \br{\phi^{-1}(V(\beta))}$. If we ensure that $\br{\phi^{-1}(V(\beta))}\cap \Omega \subset \phi^{-1}(A)$, then Lemma \ref{GluingMonotonicity:Lemma} can be applied to yield the monotonicity of $\widetilde u$ in $\Omega$. The latter can be achieved by shrinking $\beta$ (and thus the neighborhood $V(\beta)$) if necessary, using the assumption that the topological ends of $J$ lie in $\partial \Omega$ and not in $\Omega$. 
\end{proof}

\begin{lemma}\label{SmoothingArc:Lemma2}
The conclusions of Lemma \ref{SmoothingArc:Lemma} hold if the assumptions \textup{(i)},\textup{(iii)} are replaced by the assumptions
\begin{enumerate}\upshape
\item[(i$'$)] $u=0$ on $J$, $u>0$ on $B^+$, $u=0$ on $B^-$, and 
\item[(iii$'$)] each point of $J$ has a neighborhood in $B^+$ in which $|\nabla u|$ is bounded away from $0$,
\end{enumerate}
or if \textup{(i)} is replaced by the assumption
\begin{enumerate}\upshape
\item[(i$''$)] $u=0$ on $J$, $u>0$ on $B^+$, $u>0$ on $B^-$.
\end{enumerate}
\end{lemma}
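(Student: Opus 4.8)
The plan is to re-run the proof of Lemma~\ref{SmoothingArc:Lemma}, changing only the interpolation so that the smoothed function respects the degenerate sign conditions of (i$'$) and (i$''$). As there, I would first apply Theorem~\ref{Classification:theorem} to straighten $J$, so that $J=\R$, $B=\R^2$, $B^+=\{y>0\}$, $B^-=\{y<0\}$, with an admissible tube radius $\rho(x)>0$ decaying at the ends of $J$ so that the tubes below stay inside $\Omega$; once the construction is done in this model, the passage back to $\Omega$ and the estimates (b)--(d) are handled by H\"older's inequality and $W^{1,q}_0\subset W^{1,p}_0$ exactly as in Lemma~\ref{SmoothingArc:Lemma}. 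Note that the retained hypotheses, together with $u_x=0$ on $J$, still furnish positive smooth functions $\gamma,\delta$ on $\R$ with $\gamma<|u_y|<\delta$ near $J$ in $B^+$ (and also in $B^-$, in case (i$''$)), with $u_y>0$ on the $B^+$ side and $u_y<0$ on the $B^-$ side. So the only new content is the choice of the approximant $\widetilde u$ and the verification that it is monotone in $\Omega$; the smoothness and the $L^q$-smallness of $\nabla\widetilde u-\nabla u$ are then the same computations as in Lemma~\ref{SmoothingArc:Lemma}, since all the new terms will be bounded independently of the tube width.

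\textbf{Case (i$'$), (iii$'$).} Here $u\equiv 0$ on $B^-\cup J$ while on $B^+$ the function $u$ has a one-sided corner along $J$. I would fix once and for all a smooth $h\colon\R\to\R$ vanishing together with all of its derivatives on $(-\infty,0]$, positive and strictly increasing on $(0,\infty)$, and with $h(y)=o(y)$ as $y\to 0^+$ (say a small multiple of $y\mapsto e^{-1/y}\1_{(0,\infty)}(y)$), and set
\[
\widetilde u=\alpha(s)\,u+\bigl(1-\alpha(s)\bigr)\,h(y)\gamma(x),\qquad s=\tfrac{|y|}{\beta(x)},
\]
with $\alpha,\beta$ as in Lemma~\ref{SmoothingArc:Lemma}. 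On $\{s<1/2\}$ this equals $h(y)\gamma(x)$, which is smooth and flat to infinite order across $J$, and off $J$ it is a sum of products of smooth functions, so $\widetilde u$ is smooth; since $u=0=h(y)$ for $y\le 0$, one has $\widetilde u=u$ on $B^-\cup J$, hence $\{\widetilde u\ne u\}\subset V(\beta)\cap B^+$. After shrinking $\beta$ so that $u>h(y)\gamma(x)$ on $V(\beta)\cap B^+$ and so that $u_y$ stays bounded below on $V(2\beta)\cap B^+$, the formula for $\widetilde u_y$ has all three terms $\ge 0$ with at least one $>0$, so $\widetilde u_y>0$ throughout $V(2\beta)\cap B^+$; being $C^1$ there, $\widetilde u$ is strictly monotone on $V(2\beta)\cap B^+$. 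For monotonicity in $\Omega$, I would note that $\widetilde u$ vanishes identically on the closed connected ``lower half-tube'' $T^-=\overline{V(\beta/2)}\cap\{y\le 0\}$, which contains $J$ and, for $\beta$ small enough, exits every compact subset of $\Omega$ (by the argument in the last paragraph of the proof of Lemma~\ref{SmoothingArc:Lemma}). On the open set $\Omega\setminus T^-$ the set $\{\widetilde u\ne u\}$ still lies in $V(\beta)\cap B^+$, whose closure relative to $\Omega\setminus T^-$ is contained in the open set $V(2\beta)\cap B^+$ where $\widetilde u$ is strictly monotone, while $\widetilde u=u$ is monotone off it; hence the Gluing Lemma~\ref{GluingMonotonicity:Lemma} gives monotonicity of $\widetilde u$ on $\Omega\setminus T^-$, and since $\widetilde u$ is constant on $T^-$, Lemma~\ref{GluingConstant:Lemma} upgrades this to monotonicity on $\Omega$.

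\textbf{Case (i$''$).} Now $u\ge 0$ near $J$ with $\{u=0\}\cap B=J$, $u_y>0$ on the $B^+$ side and $u_y<0$ on the $B^-$ side; near $J$ the function $u$ has a two-sided corner and cannot be smoothed to a strictly monotone function, so I would flatten it: set $\widetilde u=\alpha(s)\,u$ with $s,\alpha,\beta$ as above. Then $\widetilde u$ vanishes identically on the neighborhood $\{s\le 1/2\}$ of $J$ (hence is smooth there) and equals $u$ off $V(\beta)$, so $\widetilde u$ is smooth and the estimates (b)--(d) go through as before. Put $T=\overline{V(\beta/2)}=\{s\le 1/2\}$, a closed connected tube around $J$ on which $\widetilde u$ is constant and which, for $\beta$ small, exits every compact subset of $\Omega$. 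After shrinking $\beta$ as needed one computes $\widetilde u_y>0$ on $(V(2\beta)\setminus\overline{V(\beta/2)})\cap B^+$ and $\widetilde u_y<0$ on $(V(2\beta)\setminus\overline{V(\beta/2)})\cap B^-$, so $\widetilde u$ is strictly monotone on the open set $V(2\beta)\setminus\overline{V(\beta/2)}$, which contains the closure relative to $\Omega\setminus T$ of $\{\widetilde u\ne u\}$; since $\widetilde u=u$ is monotone off that set, Lemma~\ref{GluingMonotonicity:Lemma} gives monotonicity on $\Omega\setminus T$ and Lemma~\ref{GluingConstant:Lemma} upgrades it to monotonicity on $\Omega$.

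\textbf{Where the difficulty lies.} The routine part is the smoothness and the $L^q$-estimate, which are literally the computations of Lemma~\ref{SmoothingArc:Lemma}. The hard part will be monotonicity: in Lemma~\ref{SmoothingArc:Lemma} the two-sided non-degeneracy makes the smoothed function strictly monotone on a full neighborhood of $J$, so one invocation of Lemma~\ref{GluingMonotonicity:Lemma} closes the argument, whereas here the approximant is necessarily \emph{not} strictly monotone near $J$ --- flat to all orders on the $B^-$ side in case (i$'$), and identically constant on a neighborhood of $J$ in case (i$''$) --- so one must split $\Omega$ into the closed set where $\widetilde u$ is constant, a neighborhood of it where $\widetilde u$ is honestly strictly monotone, and the outer region where $\widetilde u=u$, and combine Lemmas~\ref{GluingMonotonicity:Lemma} and~\ref{GluingConstant:Lemma}. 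Making these three regions fit together is exactly what forces the choice of the interpolation target --- $h(y)\gamma(x)$ in case (i$'$) and the constant $0$ in case (i$''$).
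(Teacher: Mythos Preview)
Your argument is correct. In case (i$'$) it is essentially the paper's proof: the paper also interpolates on $B^+$ between $u$ and $e^{-1/y}\gamma(x)$ (your $h(y)\gamma(x)$), shows $\widetilde u_y>0$ in a one-sided neighbourhood $A$ of $J$, and then combines Lemma~\ref{GluingMonotonicity:Lemma} with Lemma~\ref{GluingConstant:Lemma}. The only cosmetic difference is that the paper takes the constant set in Lemma~\ref{GluingConstant:Lemma} to be $J$ itself (working first on $\Omega\setminus J$), whereas you use the thickened lower half-tube $T^-$; both are closed, connected, carry the value $0$, and exit all compacta, so either choice works.

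In case (i$''$) your route differs from the paper's. The paper keeps the approximant nondegenerate by interpolating to $e^{-1/|y|}\gamma(x)$, so that $\widetilde u>0$ on $B\setminus J$ and $\widetilde u_y\neq 0$ on a two-sided neighbourhood of $J$ in $B\setminus J$; then Lemma~\ref{GluingMonotonicity:Lemma} gives monotonicity on $\Omega\setminus J$ and Lemma~\ref{GluingConstant:Lemma} (with the \emph{thin} set $J$) finishes. You instead flatten to the constant $0$ via $\widetilde u=\alpha(s)u$, obtaining a \emph{fat} constant tube $T=\overline{V(\beta/2)}$ and applying Lemma~\ref{GluingConstant:Lemma} with $T$. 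Your version is a bit more economical (no auxiliary $e^{-1/|y|}$ needed, and the derivative computation is shorter), while the paper's version preserves the structure $\widetilde u^{-1}(0)\cap B=J$, i.e., the smoothed function still has $J$ as a $1$-manifold level set; for the purposes of Theorem~\ref{Intro:Approximation:Theorem} this extra feature is not used, so either argument suffices. One small point: for your $\widetilde u_y>0$ claim on $(V(2\beta)\setminus\overline{V(\beta/2)})\cap B^+$ to hold as stated, choose $\alpha$ with $\alpha(s)>0$ for all $s>1/2$ (otherwise enlarge $T$ to the full set $\{\alpha(s)=0\}$).
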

\begin{proof}
In the first case, one argues as in the proof of Lemma \ref{SmoothingArc:Lemma}, by straightening the line $J$ to the real line with a diffeomorphism $\phi$. This time we interpolate between $u$ and the function $e^{-1/y}$ instead of the height function $y$:
\begin{align*}
\widetilde u(x,y)= \begin{cases} 
\alpha(s)u(x,y)+ (1-\alpha(s)) e^{-1/y} \gamma(x) & y>0\\
u & y\leq 0.
\end{cases}
\end{align*}
Here $\gamma$, $\delta$, $\beta$, $s$, $\alpha(s)$ are as in the previous proof, but working only in the upper half plane, and $A$, $V(\beta)=\{(x,y):0<y<\beta(x)\}$ are appropriate neighborhoods of $\R$ in the upper half plane $\{y>0\}$. The function $\widetilde u$ is smooth and the claims (a)--(d) from Lemma \ref{SmoothingArc:Lemma} follow as in the previous proof. We only have to argue differently for the monotonicity of $\widetilde u$. 

We compute for $y>0$
$$\widetilde u_y= \alpha'(s) s \frac{u-e^{-1/y}\gamma}{y}+ \alpha(s) u_y + (1-\alpha(s)) \frac{e^{-1/y}}{y^2}\gamma.$$
Since $u_y>\gamma$ on $A$, we have $\frac{u-e^{-1/y}\gamma}{y} \geq 0$ for all sufficiently small $y$. In particular this holds in $A$ if we shrink $A$. It follows that $\widetilde u_y>0$ on $A$. Thus, $\widetilde u$ is strictly monotone in $A$. Moreover, outside $\br {V(\beta)}\cap \{y\neq 0\} \subset A$ the function $\widetilde u$ agrees with $u$. Summarizing, in the domain $\widetilde\Omega =\R^2\setminus \R=\{y\neq 0\}$ we have $\br{V(\beta)} \cap \widetilde \Omega \subset A$ and the function $\widetilde u$ is strictly monotone in $A$, and monotone in $\widetilde \Omega\setminus \br{V(\beta)}$. By Lemma \ref{GluingMonotonicity:Lemma} we have that $\widetilde u$ is monotone in $\widetilde \Omega$.

Arguing as in the previous proof, we transfer the conclusions to $\Omega$ and deduce that $\widetilde u$ is monotone in $\Omega\setminus J$. Note that $J$ is a connected closed subset of $\Omega$. Since $\widetilde u=0$ in $J$ and $J$ exists all compact subsets of $\Omega$, by Lemma \ref{GluingConstant:Lemma} we conclude that $\widetilde u$ is monotone in $\Omega$. This completes the proof under the assumptions (i$'$) and (iii$'$).

Now, we assume that (i$'')$ holds in the place of (i). After straightening $J$, we define
\begin{align*}
\widetilde u(x,y)= \begin{cases} 
\alpha(s)u(x,y)+ (1-\alpha(s)) e^{-1/|y|} \gamma(x) & y\neq 0\\
0 & y= 0
\end{cases}
\end{align*}
This is a smooth function in $\R^2$. The conclusions (a)--(d) are again straightforward, so we only argue for the monotonicity. In $V(\beta)= \{(x,y): 0<|y|<\beta(x)\}$ and in a slightly larger open set $A\supset \br{V(\beta)}$ we have $\widetilde u_y\neq 0$. This implies that $\widetilde u$ is strictly monotone in $A$. Using Lemma \ref{GluingMonotonicity:Lemma}, we have that $\widetilde u$ is monotone in $\R^2\setminus \R$.

We transfer the conclusions to $\Omega$ and we obtain a function $\widetilde u$ that is monotone in $\Omega\setminus J$. Since $J$ is closed, connected and exits all compact subsets of $\Omega$, and $\widetilde u=0$ on $J$, by Lemma \ref{GluingConstant:Lemma} we conclude that $\widetilde u$ is monotone in $\Omega$. 
\end{proof}

\begin{lemma}[Smoothing along a Jordan curve] \label{SmoothingCurve:Lemma}
Let $\Omega\subset \R^2$ be an open set and let $J \subset \Omega$ be an embedded $1$-dimensional smooth submanifold of $\R^2$, homeomorphic to $S^1$. Suppose that $J$ is contained in an open set $B\subset \Omega$ that it is the common boundary of two disjoint regions $B^+$ and $B^-$ with $B=B^+\cup B^-\cup J$. Moreover, let $u\in W^{1,p}(\Omega)$ be a monotone function satisfying one of the following triples of conditions:
\begin{enumerate}[\upshape(i)]
\item  $u=0$ on $J$, $u>0$ on $B^+$, $u<0$ on $B^-$,
\item  $u$ is smooth in $B^+\cup J$ and in $B^-\cup J$,
\item  each point of $J$ has a neighborhood in $B^+\cup B^-$ in which $|\nabla u|$ is bounded away from $0$,
\end{enumerate} 
or
\begin{enumerate}[\upshape(i$'$)]
\item $u=0$ on $J$, $u>0$ on $B^+$, $u=0$ on $B^-$,
\item  $u$ is smooth in $B^+\cup J$ and in $B^-\cup J$,
\item  each point of $J$ has a neighborhood in $B^+$ in which $|\nabla u|$ is bounded away from $0$.
\end{enumerate} 
Then for any open set $U\subset B$ with $U\supset J$ and each $\varepsilon>0$ there exists an open set $A\subset U$ containing $J$ and a monotone function $\widetilde u$ in $\Omega$ such that
\begin{enumerate}[\upshape(a)]
\item $\widetilde u$ agrees with $u$ in $\Omega\setminus A$ and is smooth in $B$,
\item $|\widetilde u- u|<\varepsilon$ in $A$,
\item $\|\nabla \widetilde u-\nabla u\|_{L^p(A)}<\varepsilon$, and
\item $\widetilde u-u \in W^{1,p}_0(A)$.
\end{enumerate} 
\end{lemma}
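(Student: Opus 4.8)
The plan is to mimic the proofs of Lemmas~\ref{SmoothingArc:Lemma} and~\ref{SmoothingArc:Lemma2}, replacing the real line $\R$ by the circle $S^1$. First, using Theorem~\ref{Classification:theorem}, I would fix a neighborhood $U$ of $J$ and a diffeomorphism $\phi\colon\R^2\to U$ with $\phi(S^1)=J$; pulling everything back by $\phi$, we may assume that $J=S^1$, that $B$ is an annular neighborhood of $S^1$, and that $B^+,B^-$ are its outer and inner components. Writing $r$ for the signed normal coordinate and $\theta$ for the angle, we have $J=\{r=0\}$, $B^+=\{r>0\}$, $B^-=\{r<0\}$, and the auxiliary functions $\alpha,\beta,\gamma,\delta$ of the earlier proofs become $2\pi$-periodic smooth functions, i.e.\ smooth functions on $S^1$. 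The compactness of $S^1$ actually removes the work done at the end of those proofs: a sufficiently thin collar of $J$ is automatically compactly contained in $\Omega$, so there is no issue with ``ends of $J$''.

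Under the first triple of hypotheses (signs $+,-$ on $B^+,B^-$) I would set $s=|r|/\beta(\theta)$ and $\widetilde u=\alpha(s)\,u+(1-\alpha(s))\,r\,\gamma(\theta)$, exactly as in Lemma~\ref{SmoothingArc:Lemma}. Using $u=0$, $u_\theta=0$ on $J$ and $|\nabla u|\neq0$ near $J$ one gets $\delta(x)>u_r(x,y)>\gamma(x)>0$ in a collar, so the estimates (b)--(d) and the bound $\widetilde u_r\ge\gamma>0$ go through verbatim; thus $\widetilde u$ is strictly monotone in a full two-sided collar $A\ni J$ and equals the monotone function $u$ off a thinner collar $\overline{V(\beta)}\subset A$. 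Since here $J\subset A$, the Gluing Lemma~\ref{GluingMonotonicity:Lemma} applies at once and gives $\widetilde u$ monotone in $\Omega$. Under the second triple (signs $+,0$ on $B^+,B^-$), following the first case of Lemma~\ref{SmoothingArc:Lemma2} I would interpolate only on the outer side, $\widetilde u=\alpha(s)\,u+(1-\alpha(s))\,e^{-1/r}\gamma(\theta)$ for $r>0$ and $\widetilde u=u$ for $r\le0$; flatness of $e^{-1/r}$ at $0$ makes $\widetilde u$ smooth on $B$, the estimates (b)--(d) follow as before, and $\widetilde u_r>0$ on a one-sided collar $A\subset B^+$, so $\widetilde u$ is strictly monotone in $A$. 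Applying Lemma~\ref{GluingMonotonicity:Lemma} inside $\Omega\setminus J$ gives that $\widetilde u$ is monotone in $\Omega\setminus J$. Arranging also $e^{-1/r}\gamma<u$ on $\{0<r<\beta(\theta)\}$ (by shrinking $\beta$), we have $\widetilde u\le u$ everywhere, $0<\widetilde u\le u$ on the collar $N:=\{0<r<\beta(\theta)\}$, and $\widetilde u=u$ on $\Omega\setminus N$.

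The main obstacle is to upgrade the second case from monotonicity on $\Omega\setminus J$ to monotonicity on all of $\Omega$, i.e.\ to cross $J$. In the line setting Lemma~\ref{SmoothingArc:Lemma2} does this with Lemma~\ref{GluingConstant:Lemma}, which requires $J$ to exit every compact subset of $\Omega$ --- false for the compact curve $J\cong S^1$, so a substitute is needed. I would argue directly. From $\widetilde u\le u$, $\widetilde u>0$ on $N$, $\widetilde u=0$ on $J$ and continuity one deduces $\{\widetilde u<0\}=\{u<0\}$, $\widetilde u=u$ on $\{u\le0\}$, and $\{u>0\}\subset\Omega\setminus J$, while $\widetilde u\to0$ on approach to $J$ from either side. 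Now let $W\subset\subset\Omega$ be open and set $M=\max_{\overline W}\widetilde u$. If $M\le0$ then $\overline W\cap N=\emptyset$, so $\widetilde u=u$ on $\overline W$ and monotonicity of $u$ gives $M=\max_{\partial W}\widetilde u$. If $M>0$, apply monotonicity of $\widetilde u$ on $\Omega\setminus J$ to the sets $W\setminus N_\varepsilon(J)\subset\subset\Omega\setminus J$ (a closed $\varepsilon$-neighborhood of $J$ removed); since $\widetilde u$ is bounded below $M$ near $J$ and $\overline W\setminus J$ is dense in $\overline W$, letting $\varepsilon\to0$ forces $M$ to be attained on $\partial W$. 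The minimum is handled symmetrically, distinguishing $\min_{\overline W}\widetilde u$ negative (reduce to $u$ via $\widetilde u=u$ on $\{u\le0\}$), positive (then $\overline W\subset\{u>0\}\subset\Omega\setminus J$, reduce there), and zero (then $u\ge\widetilde u\ge0$ on $\overline W$, so $0=\min_{\overline W}u=\min_{\partial W}u$ by monotonicity of $u$, and the realizing boundary point also gives $\min_{\partial W}\widetilde u=0$). This proves $\widetilde u$ monotone in $\Omega$. Finally, as in Lemmas~\ref{SmoothingArc:Lemma} and~\ref{SmoothingArc:Lemma2}, I would push $\widetilde u$ forward through $\phi$ and invoke H\"older's inequality to pass from the auxiliary exponent $q>p$ to $p$, obtaining conclusions (a)--(d) in the original $\Omega$.
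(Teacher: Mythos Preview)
Your proof is correct. For the first triple of assumptions it is essentially the paper's proof: the paper passes to logarithmic coordinates so as to work with a periodic function on a strip and takes $\beta$ constant, while you work directly in $(r,\theta)$ coordinates with $\beta=\beta(\theta)$, but this is only cosmetic.

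The genuine difference is in how you pass from monotonicity on $\Omega\setminus J$ to monotonicity on $\Omega$ under assumptions (i$'$)--(iii$'$). You first arrange the extra inequality $e^{-1/r}\gamma<u$ (hence $\widetilde u\le u$) by shrinking $\beta$, and then run a direct case analysis on the sign of $\max_{\overline W}\widetilde u$ and $\min_{\overline W}\widetilde u$, peeling off thin collars $N_\varepsilon(J)$ and using continuity of $\widetilde u$ across $J$. The paper instead appeals to the Gluing Lemma~\ref{Gluing:Lemma}: since the modification set $\phi^{-1}(V(\beta))$ is precompact, one picks $t>0$ with $0<\widetilde u<t$ and $0<u<t$ there, sets $\Upsilon=u^{-1}((0,t))\supset\phi^{-1}(V(\beta))$, notes $\Upsilon\subset\Omega\setminus J$ (so $\widetilde u$ is already known to be monotone on $\Upsilon$), and verifies the hypotheses of Lemma~\ref{Gluing:Lemma} for $v=\widetilde u|_\Upsilon$. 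The paper's route is shorter, reuses a lemma already in hand, and does not need the comparison $\widetilde u\le u$; your route is self-contained and makes the mechanism transparent. Both arguments are valid.
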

\begin{proof}
We will only sketch the differences with the proofs of Lemmas \ref{SmoothingArc:Lemma} and \ref{SmoothingArc:Lemma2}. By Theorem \ref{Classification:theorem} we may map $B$ with a diffeomorphism to a neighborhood of $S^1$. After shrinking $B$, we may assume that this neighborhood of $S^1$ is an annulus. Then, using logarithmic coordinates, we map $S^1$ to $\R$. Let $\phi$ denote the composition of the two maps described. By, precomposing $u$ with $\phi^{-1}$, we can obtain a periodic function, still denoted by $u$, in a strip $\{(x,y):|y|< c\}$. We consider a strip $V(\beta)= \{ |y|<\beta\}$ under assumption (i), or $V(\beta)=\{0<y<\beta\}$ under (i$'$), where $\beta$ is a constant rather than a function,  and a strip $A \supset \br {V(\beta)}$. We consider the function $\widetilde u$ with the same definition as in the proofs of Lemmas \ref{SmoothingArc:Lemma} and \ref{SmoothingArc:Lemma2}. The function $\widetilde u$ is also periodic and smooth, so it gives a function in the original domain $\Omega$, by extending it to be equal to $u$ outside $B$. We still denote this function by $\widetilde u$. The properties (a)--(d) are straightforward to obtain, upon shrinking the strip $A$. We only argue for the monotonicity.

Under the first set of assumptions, and in particular under (i), the function $\widetilde u$ is strictly monotone in $\phi^{-1}(A)$ and agrees with $u$ outside $\phi^{-1}(\br{V(\beta)}) \subset A$. Hence, by Lemma \ref{GluingMonotonicity:Lemma}, $\widetilde u$ is monotone in $\Omega$.

Under the second set of assumptions, and in particular under (i$'$), we can only conclude that $\widetilde u$ is monotone in $\Omega\setminus J$; see also the proof of Lemma \ref{SmoothingArc:Lemma2}. However, now we cannot apply Lemma \ref{GluingConstant:Lemma}, since $J$ does not exit all compact subsets of $\Omega$. Instead, we are going to use Lemma \ref{Gluing:Lemma}. In $\phi^{-1}(V(\beta))$, which is precompact, by continuity we have $0<\widetilde u< t$ and $0<u<t$ for some $t>0$. We set $\Upsilon= u^{-1}((0,t))$, so we have $\phi^{-1}(V(\beta))\subset \Upsilon$. Observe that $\widetilde u$ is monotone in $\Upsilon \subset \Omega\setminus J$ and $0\leq \widetilde u\leq t$ in $\Upsilon$. By Lemma \ref{Gluing:Lemma} we conclude that the function
$$\widetilde u= \widetilde u \x_{\Upsilon} + u \x_{\Omega\setminus \Upsilon}$$
is monotone in $\Omega$. 
\end{proof}

\bibliography{biblio}
\end{document}